\declaretheorem[name=Theorem]{thm}
\crefname{thm}{Theorem}{Theorems}
\declaretheorem[name=Lemma,sibling=thm]{lem}
\declaretheorem[name=Corollary,sibling=thm]{cor}
\declaretheorem[name=Observation,sibling=thm]{obs}
\crefname{obs}{Observation}{Observations}
\declaretheorem[name=Claim,sibling=thm]{clm}
\crefname{lem}{Lemma}{Lemmas}
\crefname{thm}{Theorem}{Theorems}
\crefname{cor}{Corollary}{Corollaries}
\crefname{prop}{Proposition}{Propositions}
\crefname{claim}{Claim}{Claims}
\crefname{conj}{Conjecture}{Conjectures}
\crefname{openproblem}{Open Problem}{Open Problems}
\theoremstyle{definition}
\newtheorem{rem}[thm]{Remark}
\crefname{rem}{Remark}{Remark}
\theoremstyle{plain}
\newcommand{\NN}{\mathbb{N}}
\newcommand{\GG}{\mathcal{G}}
\newcommand{\N}{\mathbb{N}}
\newcommand{\R}{\mathbb{R}}
\newcommand{\Z}{\mathbb{Z}}
\newcommand{\ceil}[1]{{\lceil #1 \rceil}}
\newcommand{\floor}[1]{{\lfloor #1 \rfloor}}
\renewcommand{\ge}{\geqslant}
\renewcommand{\le}{\leqslant}
\renewcommand{\geq}{\geqslant}
\renewcommand{\leq}{\leqslant}
\DeclareMathOperator{\crr}{cr}
\definecolor{brightmaroon}{rgb}{0.76, 0.13, 0.28}
\definecolor{linkblue}{rgb}{0, 0.337, 0.227}
\newcommand{\defin}[1]{\emph{\textcolor{brightmaroon}{#1}}}
\def\mathcolor#1#{\@mathcolor{#1}}
\def\@mathcolor#1#2#3{%
  \protect\leavevmode
  \begingroup
    \color#1{#2}#3%
  \endgroup
}
\newcommand{\mathdefin}[1]{\mathcolor{brightmaroon}{#1}}
\DeclareMathOperator{\tw}{tw}
\DeclareMathOperator{\pw}{pw}
\DeclareMathOperator{\bw}{bw}
\DeclareMathOperator{\td}{td}
\DeclareMathOperator{\rtw}{rtw}
\DeclareMathOperator{\diam}{diam}
\DeclareMathOperator{\mindist}{min-dist}
\DeclareMathOperator{\dist}{dist}
\DeclareMathOperator{\ld}{ld}
\DeclareMathOperator{\polylog}{polylog}
\DeclareMathOperator{\evol}{Evol}
\DeclareMathOperator{\ivol}{Ivol}
\DeclareMathOperator{\tvol}{Tvol}
\newcommand{\scr}[1]{\mathcal{#1}}
\newcommand{\UWT}[2]{#1\langle #2\rangle}
\newcommand{\Apex}[2]{#1^{+#2}}
\begin{document}

\begin{frontmatter}[classification=text]

\title{Planar Graphs in Blowups of Fans}

\author[marc]{Marc Distel\thanks{Research supported by an Australian Government Research Training Program Scholarship.}}
\author[vida]{Vida Dujmović\thanks{Research supported by NSERC, Australian Research Council and a University of Ottawa Research Chair. }}
\author[gwen]{Gwenaël Joret\thanks{Research supported by the Belgian National Fund for Scientific Research (FNRS).}}
\author[piotrek]{Piotr Micek\thanks{Research supported by the National Science Center of Poland under grant UMO-2023/05/Y/ST6/00079 within the WEAVE-UNISONO program.}}
\author[pat]{Pat Morin\thanks{Research supported by NSERC.}}
\author[david]{David R. Wood\thanks{Research supported by the Australian  Research Council and NSERC.}}

\begin{abstract}
  We show that every $n$-vertex planar graph is contained in the graph obtained from a fan by blowing up each vertex by a complete graph of order $O(\sqrt{n}\log^2 n)$.  Equivalently, every $n$-vertex planar graph $G$ has a set $X$ of $O(\sqrt{n}\log^2 n)$ vertices such that $G-X$ has bandwidth $O(\sqrt{n}\log^2 n)$. We in fact prove the same result for any proper minor-closed class, and we prove more general results that explore the trade-off between $X$ and the bandwidth of $G-X$. The proofs use three key ingredients.  The first is a new local sparsification lemma, which shows that every $n$-vertex planar graph $G$ has a set of $O((n\log n)/\delta)$ vertices whose removal results in a graph with local density at most $\delta$. The second is a generalization of a method of Feige and Rao that relates bandwidth and local density using volume-preserving Euclidean embeddings. The third ingredient is graph products, which are a key tool in the extension to any proper minor-closed class. 
\end{abstract}

\end{frontmatter}

\newpage
\tableofcontents
\newpage
\section{Introduction}

This paper studies the global structure of planar graphs and more general graph classes, through the lens of graph blowups\footnote{We consider finite, simple undirected graphs $G$ with vertex set $V(G)$ and edge set $E(G)$. A graph $H$ is \defin{contained} in a graph $G$ if $H$ is isomorphic to a subgraph of $G$.}\footnote{Let $\mathdefin{\NN}:=\{0,1,2,\dots\}$ and $\mathdefin{\NN_1}:=\{1,2,\dots\}$. Let $\R^+:=\{x\in\R:x>0\}$, and for $z\in\R$ let  
$\R_{\geq z}:=\{x\in\R:x\geq z\}$. We use $\log(x)$ for the base-$2$ logarithm of $x$, and we use $\ln(x)$ for the natural logarithm of $x$. When a logarithm appears inside $O$-notation, we use the convention that $\log(x):=1$ for all $x\le 2$.}. Here, the \defin{$b$-blowup} of a graph $H$ is the graph obtained by replacing each vertex $v$ of $H$ with a complete graph $K_v$ of order $b$ and replacing each edge $vw$ of $H$ with a complete bipartite graph with parts $V(K_v)$ and $V(K_w)$, as illustrated in \cref{5blowup}.

\begin{figure}[ht]
   \centering
   \includegraphics{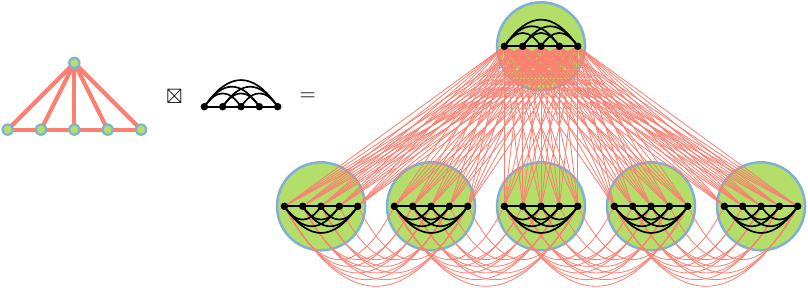}
    \caption{The $5$-blowup of a $6$-vertex fan.}
    \label{5blowup}
\end{figure}

\subsection{Planar Graphs}

Our starting point is the following question: What is the simplest family of graphs $\mathcal{H}$ such that, for each $n$-vertex planar graph $G$, there is a graph $H\in\mathcal{H}$ such that $G$ is contained in a $\tilde{O}(\sqrt{n})$-blowup of $H$, where $\tilde{O}$ notation hides $\polylog(n)$ terms? We show that one can  take $\mathcal{H}$ to be the class of fans, where a \defin{fan} is a graph consisting of a path $P$ plus one \defin{center} vertex adjacent to every vertex in $P$.

\begin{thm}\label{planar-fanblowup}
For any $n\in\NN$ there exists a $O(\sqrt{n}\log^2n)$-blowup of a fan that contains every $n$-vertex planar graph.
\end{thm}

\cref{planar-fanblowup} can be restated in terms of the following classical graph parameter. Let $G$ be a graph. For an ordering  $v_1,\ldots,v_n$ of $V(G)$, let the \defin{bandwidth} $\mathdefin{\bw_G(v_1,\ldots,v_n)}:=\max(\{|j-i|: v_iv_j\in E(G)\}\cup\{0\})$. The \defin{bandwidth} of $G$ is $\mathdefin{\bw(G)}:=\min\{\bw_G(v_1,\ldots,v_n):\text{$v_1,\ldots,v_n$ is an ordering of $V(G)$}\}$. See \citep{CS89,ST20,ABET20,BPTW10,rao:small,BST09,feige:approximating} for a handful of important references on this topic. 
It is well known that bandwidth is closely related to blowups of paths~\citep{DSW07,BPTW10}. Indeed, \cref{planar-fanblowup,planar-bandwidth} are equivalent, where $X$ is the set of vertices mapped to the center of the fan; see \cref{UniversalBlowup,blowup-bandwidth} for a proof. 


\begin{thm}\label{planar-bandwidth}
For any $n\in\NN$, every $n$-vertex planar graph $G$ has a set $X$ of $O(\sqrt{n}\log^2n)$ vertices such that $G-X$ has bandwidth $O(\sqrt{n}\log^2 n)$.
\end{thm}


We in fact prove several generalizations of \cref{planar-bandwidth} that (a) study the trade-off between $|X|$ and $\bw(G-X)$, and (b) consider more general graph classes than planar graphs. 

\subsection{Optimality}
\label{optimality}

Before describing our generalizations of \cref{planar-fanblowup,planar-bandwidth} we show that in various ways these results are best possible, except possibly for the $\log^2 n$ factor.


First note that the blowup factor in \cref{planar-fanblowup} is close to best possible. The $k$-blowup of a graph $H$ with treewidth $t$ has treewidth at most $k(t+1)-1$.\footnote{A \defin{tree-decomposition} of a graph $G$ is a collection $(B_x: x \in V(T))$ of subsets of $V(G)$ indexed by the nodes of a tree $T$, such that: (a) for each edge ${vw \in E(G)}$, there exists a node ${x \in V(T)}$ with ${v,w \in B_x}$, and (b) for each vertex ${v \in V(G)}$, the set $\{ x \in V(T) \colon v \in B_x \}$ induces a non-empty (connected) subtree of $T$. The sets $B_t$ for $t\in V(T)$ are called \defin{bags}. The \defin{width} of such a tree-decomposition is ${\max\{ |B_x| \colon x \in V(T) \}-1}$. A \defin{star} is a tree with at least one vertex, called the \defin{centre}, that is adjacent to every other vertex. A star with at least three vertices has a unique center. If $S$ is a star, then a tree-decomposition $(B_s:s\in V(S))$ is a \defin{star-decomposition}. If $P$ is a path, then a tree-decomposition $(B_p:p\in V(P))$ is a \defin{path-decomposition}, denoted by the corresponding sequence of bags. The \defin{treewidth} of a graph $G$, denoted  $\mathdefin{\tw(G)}$, is the minimum width of a tree-decomposition of $G$. The \defin{pathwidth} of a graph $G$, denoted  $\mathdefin{\pw(G)}$, is the minimum width of a path-decomposition of $G$. Treewidth is the standard measure of how similar a graph is to a tree. Pathwidth is the standard measure of how similar a graph is to a path. By definition, $\tw(G)\leq\pw(G)$ for every graph $G$. 
A \defin{rooted tree} is a tree $T$ with some fixed vertex $r$ called the \defin{root}. The \defin{induced root} of a subtree $T'$ of $T$ is the vertex of $T'$ closest to $r$ in $T$. The \defin{closure} of a rooted tree $T$ is the graph $G$ with $V(G):=V(T)$, where two vertices are adjacent in $G$ if one is an ancestor of the other in $T$. 
The \defin{depth} of a rooted tree $T$ is the maximum number of vertices in a root--leaf path in $T$. The \defin{treedepth} of a graph $G$, denoted $\mathdefin{\td(G)}$, is the minimum depth of  a rooted tree $T$ such that $G$ is contained in the closure of $T$. It is well known and easily seen that $\pw(G)\leq \td(G)-1$. } Since there are $n$-vertex planar graphs of treewidth $\Omega(\sqrt{n})$ (such as the $\sqrt{n}\times\sqrt{n}$ grid), any result like \cref{planar-fanblowup} that finds all planar graphs in blowups of bounded treewidth graphs must have blowups of size $\Omega(\sqrt{n})$
(and fans have treewidth 2, in fact pathwidth 2).  


It follows from the Lipton--Tarjan Planar Separator Theorem that every $n$-vertex planar graph $G$ satisfies $\tw(G)\leq \pw(G)\in O(\sqrt{n})$ (see \citep{Bodlaender98}). It is also well known that  
if $\bw_G(v_1,\dots,v_n)\leq k$ then $\{v_1,\dots,v_{k+1}\},\{v_2,\dots,v_{k+2}\},\dots,\{v_{n-k},\dots,v_n\}$ is a path-decomposition of $G$ of width $k$, implying $\pw(G)\leq \bw(G)$. However, $\bw(G)\in \tilde{O}(\sqrt{n})$ is a much stronger property than $\pw(G)\in \tilde{O}(\sqrt{n})$. Indeed, bandwidth can be $\Omega(n / \log n)$ 
for very simple graphs, such as $n$-vertex complete binary trees. This highlights the strength of \cref{planar-bandwidth}. In fact, \cref{planar-bandwidth} is tight (up to $\polylog$ factors) even for complete binary trees. For a complete binary tree $T$ on $n$ vertices, $\bw(T)\in \Omega(n/\log n)$ since the root of $T$ is within distance $\log n$ of all vertices.  For any set $X\subseteq V(T)$, $T$ contains a complete binary tree with $\Omega(n/|X|)$ vertices that avoids $X$, so $\bw(T-X)\ge\Omega((n/|X|) / \log(n/|X|))$.  Thus, $\bw(T-X)\in\tilde{\Omega}(\sqrt{n})$ for any set $X\subseteq V(T)$ of size $\tilde{O}(\sqrt{n})$.

Pathwidth $2$ is also the best possible bound in results like  \cref{planar-fanblowup}.  Indeed, even \emph{treewidth} $1$ is not achievable:  \citet{LMST08} describe an infinite family of $n$-vertex planar graphs $G$ such that every (improper) 2-colouring has a monochromatic component on $\Omega(n^{2/3})$ vertices. Say $G$ is contained in a $b$-blowup $(K_v:v\in V(T))$ of a tree $T$. Colour each vertex in each $K_v$ by the colour of $v$ in a proper 2-colouring of $T$. Each monochromatic component is contained in some $K_v$, implying that $b\in\Omega(n^{2/3})$.



Any graph of treedepth $c$ has pathwidth at most $c-1$, so it is natural to ask if \cref{planar-fanblowup} can be strengthened to show that every $n$-vertex planar graph is contained in a $\tilde{O}(\sqrt{n})$-blowup of a bounded treedepth graph.  The answer is no, as we now explain. \citet[Theorem~19]{dvowoo} show that, for any $c\in\NN_1$ there exists $\epsilon>0$ such that if the $\sqrt{n}\times\sqrt{n}$ grid is contained in a $b$-blowup of a graph $H$ with treedepth at most $c$, then $b\in\Omega(n^{1/2+\epsilon})$. Thus, the $\sqrt{n}\times \sqrt{n}$-grid is not contained in a $\tilde{O}(n^{1/2})$-blowup of a graph with bounded treedepth.  In particular, \cref{planar-fanblowup} cannot be strengthened to the treedepth setting without increasing the size of the blowup by a polynomial factor.

%

\subsection{Proper Minor-Closed Classes}

We show that \cref{planar-fanblowup} can be generalised for any proper minor-closed graph class\footnote{A \defin{class} is a collection of graphs closed under isomorphism. A class $\GG$ is \defin{monotone} if for every $G\in\GG$, every subgraph of $G$ is in $\GG$. A class $\GG$ is \defin{proper} if some graph is not in $\GG$. A graph $H$ is a \defin{minor} of a graph $G$ if a graph isomorphic to $H$ can be obtained from a subgraph of $G$ by performing any number of contractions; otherwise, $G$ is \defin{$H$-minor-free}. A class $\GG$ is \defin{minor-closed} if for every $G\in\GG$ every minor of $G$ is in $\GG$. A class of graphs $\GG$ is \defin{$H$-minor-free} if every graph in $\GG$ is $H$-minor-free. Note that if $\GG$ is a proper minor-closed class, then $\GG$ is $K_h$-minor-free for some $h\in \NN$.}. 

\begin{thm}
\label{minor-fanblowup}
    For each $h,n\in\NN$ there exists a $O_h(\sqrt{n}\log^2n)$-blowup of a fan that contains every $n$-vertex $K_h$-minor-free graph.
\end{thm}

This result answers a question of \citet{distel.dujmovic.ea:product} up to a $\log^2 n$ factor. They asked if every $K_h$-minor-free graph is contained in a $O_h(\sqrt{n})$-blowup of a graph of bounded pathwidth. Since fans have pathwidth $2$, \cref{minor-fanblowup} gives a positive answer, up to a $\log^2 n$ factor.

\cref{minor-fanblowup} can be rewritten in terms of bandwidth as follows. 

\begin{thm}
\label{minor-bandwidth}
    For each $h,n\in\NN$ every $n$-vertex $K_h$-minor-free graph $G$ has a set $X$ of $O_h(\sqrt{n}\log^2n)$ vertices such that $G-X$ has bandwidth $O_h(\sqrt{n}\log^2 n)$.
\end{thm}

\cref{minor-bandwidth} is applicable for graphs embeddable on any fixed surface. In this setting, we prove a more specialised version of \cref{minor-bandwidth} with much improved dependence on the genus of the surface. In fact, we allow embeddings in surfaces with a bounded number of crossings per edge. These results are presented in \cref{genus_section}.

\subsection{Previous Results}
\label{Previous}



    

\setlength{\tabcolsep}{2.5ex}
\begin{table}[!ht]
\caption{Results on $b$-blowups of a graph $H$ that contain every $n$-vertex graph $G$ from graph class $\mathcal{G}$.}
\label{results_table}
\ \\[.5ex]
\centering
\begin{tabular}{l@{\hspace{1ex}}l@{\hspace{1ex}}l@{\hspace{1ex}}rl@{\hspace{1ex}}r}
\toprule
class $\mathcal{G}$ &$H$
&\multicolumn{2}{c}{lower bound on $b$}
&\multicolumn{2}{c}{upper bound on $b$}\\
  \midrule
\multirow{6}{*}{planar} & tree &
 $\Omega(n^{2/3})$ & \cite{LMST08}
 & $O(n^{2/3})$& \cite{lipton.tarjan:applications}\\[1.5ex]
& $\tw\leq2$
& $\Omega(\sqrt{n})$ &
& $O(\sqrt{n})$ & \cite{distel.dujmovic.ea:product}\\[1.5ex]
& fan
& $\Omega(\sqrt{n})$ &
& $O(\sqrt{n}\log^2 n)$ & \cref{planar-fanblowup}\\[1.5ex]
& $\tw\leq3$
& $\Omega(\tw(G))$ &
& $\tw(G)+1$ &  \cite{ISW24}\\[1ex]
\midrule
     max-degree $\Delta$  & tree
     & $\Omega(\Delta\cdot\tw(G))$ & \cite{Wood09}
     & $O(\Delta\cdot\tw(G))$ & \cite{ding.oporowski:some,Wood09,DW24}\\[1ex]
\midrule
    & $\tw\leq 2$
    & $\Omega(\sqrt{gn})$ & \cite{gilbert.hutchinson.ea:separator}
    & $O((g+1)\sqrt{n})$& \cite{distel.dujmovic.ea:product}\\[1.5ex]
    Euler genus $g$  & $\tw\leq3$
     &  & 
     & $2(g+1)(\tw(G)+1)$ & \cite{ISW24}\\[1.5ex]
     & fan
     & $\Omega(\sqrt{gn})$ & \cite{gilbert.hutchinson.ea:separator}
     & $O(\sqrt{gn}+\sqrt{n}\log^2 n)$ & \cref{genus-fanblowup}\\[1ex]
\midrule
    $(g,k)$-planar & $\tw\leq O(k^3)$
    & & & $O(k^{3/4}g^{3/4} \sqrt{n} )$ & \citep{dvowoo}\\[1.5ex]
    $(g,k)$-planar & $\tw\leq 10^9$
    & $\Omega(\sqrt{gkn})$ & \cite{DEW17}
    & $O_{g,k}(\sqrt{n} )$ & \citep{DHSW24}\\[1.5ex]
    $k$-planar & fan
    & $\Omega(\sqrt{kn})$ & \cite{DEW17}
    & $O(k^{5/4}\sqrt{n}\log^2 n)$ & \cref{k-planar-fanblowup}\\[1.5ex]
    $(g,k)$-planar & fan
    & $\Omega(\sqrt{gkn})$ & \cite{DEW17}
    & $O(g^{1/2} k^{5/4}\sqrt{n}\log^2 n)$ & \cref{gk-planar-fan-blowup}\\[1ex]
\midrule
    $K_{3,t}$-minor-free & $\tw\leq 2$
    & $\Omega(\sqrt{tn})$ && $O(t\sqrt{n})$ & \cite{distel.dujmovic.ea:product} \\[1.5ex]
    $K_h$-minor-free & $\tw\leq h-2$
    & $\Omega(\sqrt{n})$ &
    & $O(\sqrt{hn})$ & \cite{ISW24}\\[1.5ex]
    $K_h$-minor-free & $\tw\leq h-2$
    & $\Omega(\tw(G))$  &
    & $\tw(G)+1$ & \cite{ISW24}\\[1.5ex]
    $K_h$-minor-free & $\tw\leq4$
    & $\Omega(h\sqrt{n})$ & \cite{ast90}
    & $O_h(\sqrt{n})$ & \cite{distel.dujmovic.ea:product}\\[1.5ex]
    $K_h$-minor-free & $\tw\leq3$
    & $\Omega(h\sqrt{n})$ & \cite{ast90}
    & $O_h(\sqrt{n})$ & \cite{Distel25}\\[1.5ex]
    $K_h$-minor-free & fan
    & $\Omega(\sqrt{n})$ &
    & $O_h(\sqrt{n}\log^2 n)$ & \cref{minor-fanblowup}\\[1ex]
\midrule
 & star &
 $\Omega(n^{2/3})$ & \cite{LMST08}
 & $O(n^{2/3})$& \cite{lipton.tarjan:applications}\\[1.5ex]
$O(\sqrt{n})$ treewidth
 & $\td\le O(\log\log n)$
& $\Omega(\sqrt{n})$ & \cite{dvowoo}
& $O(\sqrt{n})$ & \cite{dvowoo}\\[1ex]
    & $\td\leq O(1/\epsilon)$
& $\Omega(n^{1/2+\epsilon})$ & \cite{dvowoo}
& $O(n^{1/2+\epsilon})$ & \cite{dvowoo}\\[1ex]
    \bottomrule
\end{tabular}
\end{table}


As summarized in \cref{results_table}, we now compare the above results with similar results from the literature, starting with the celebrated \defin{Planar Separator Theorem} due to \citet{lipton.tarjan:separator}, which states that any $n$-vertex planar graph $G$ contains a set $X$ of $O(\sqrt{n})$ vertices such that each component of $G-X$ has at most $n/2$ vertices. This theorem quickly leads to results about the blowup structure of planar graphs.  Applying the Planar Separator Theorem recursively shows that any $n$-vertex planar graph $G$ is contained in a graph that can be obtained from the closure of a 
tree of height $O(\log n)$ by blowing up the nodes of depth $i$ into cliques of size $O(\sqrt{n/2^i})$.  (This observation is made by \citet{babai.chung.ea:on} to show that there is a \defin{universal graph} with $O(n^{3/2})$ edges and that contains all $n$-vertex planar graphs.)\ 
By applying it differently, \citet{lipton.tarjan:applications} show that $G$ is contained in a graph obtained from a star by blowing up the center node into a clique of size $n^{1-a}$ and blowing up each leaf into a clique of size $O(n^{2a})$. These two structural results have had an enormous number of applications for algorithms, data structures, and combinatorial results on planar graphs. The second result, with $a=1/3$, shows that $G$ is contained in a $O(n^{2/3})$-blowup of a star. \citet{dvowoo} use the second result recursively (with the size of the separator fixed at $c\sqrt{n}$ even for subproblems of size less than $n$) to show that $G$ is contained in the $O(\sqrt{n})$-blowup of the closure of a tree of height $O(\log\log n)$. That is, $G$ is contained in the $O(\sqrt{n})$-blowup of a graph of treedepth $O(\log\log n)$.  The same method, with the size of the separator fixed at $cn^{1/2+\epsilon}$ shows that $G$ is contained in an $O(n^{1/2+\epsilon})$-blowup of a graph of treedepth $O(1/\epsilon)$ \cite{dvowoo}. See \citep{dvowoo} for more about blowup structure of graph classes that admit $O(n^{1-\epsilon})$-balanced separators.

Using different methods, \citet{ISW24} show that every $n$-vertex planar graph is contained in a $O(\sqrt{n})$-blowup of a graph with treewidth 3. Improving this result, \citet{distel.dujmovic.ea:product} show that every $n$-vertex planar graph is contained in a $O(\sqrt{n})$-blowup of a  treewidth-$2$ graph. They  ask whether every planar graph is contained in a $O(\sqrt{n})$-blowup of a bounded pathwidth graph. Since fans have pathwidth $2$, \cref{planar-fanblowup} answers this question, with $O(\sqrt{n})$ replaced by $O(\sqrt{n}\log^2 n)$.  

Except for the star result (which requires an $\Omega(n^{2/3})$ blowup), all of the above results require blowing up a graph with many high-degree vertices.  \Cref{planar-fanblowup} shows that a pathwidth-$2$ graph with one high-degree vertex is enough, and with a quasi-optimal blowup of $O(\sqrt{n}\log^2 n)$.  Thus, \cref{planar-fanblowup} offers a significantly simpler structural description of planar graphs than previous results.

A related direction of research, introduced by \citet{UTW}, involves showing that every planar graph $G$ is contained in the $b$-blowup of a bounded treewidth graph, where $b$ is a function of the treewidth of $G$. They define the \defin{underlying treewidth} of a graph class $\GG$ to be the minimum integer $k$ such that for some function $f$ every graph $G\in\GG$ is contained in a $f(\tw(G))$-blowup of a graph $H$ with $\tw(H)\leq k$. They show that the underlying treewidth of the class of planar graphs equals 3. In particular, every planar graph $G$ with $\tw(G)\leq t$ is contained in a $O(t^2\log t)$-blowup of a graph with treewidth $3$. \citet{ISW24} reduce the blowup factor to $t+1$. In this setting, treewidth 3 is best possible: \citet{UTW} show that for any function $f$, there are planar graphs $G$ such that if $G$ is contained in a $f(\tw(G))$-blowup of a graph $H$, then $H$ has treewidth at least 3. 

Allowing blowups of size $O(\sqrt{n})$ enables substantially simpler graphs $H$, since \citet{distel.dujmovic.ea:product} show that $\tw(H)\leq 2$ suffices in this $O(\sqrt{n})$-blowup setting. Allowing an extra $O(\log^2n)$ factor in the blowup, the current paper goes further and shows that a fan $H$ (which has pathwidth $2$) suffices. 

For $K_h$-minor-free graphs (which also have treewidth $O_h(\sqrt{n})$~\citep{ast90}), there is a similar distinction between $f(\tw(G))$-blowups and $O_h(\sqrt{n})$-blowups. \citet{UTW} show that the underlying treewidth of the class of $K_h$-minor-free graphs equals $h-2$, whereas \citet{distel.dujmovic.ea:product} show that $\tw(H)\leq 4$ suffices for $O_h(\sqrt{n})$-blowups of $H$. This result was improved to $\tw(H)\leq 3$ by \citet{Distel25}. \cref{minor-fanblowup} improves this result with $H$ a fan, which has pathwidth and treewidth 2, at the expense of an extra $\log^2n$ factor in the blowup. 

\subsection{Bandwidth and Fan-Blowups}

The following straightforward lemma shows that \cref{planar-bandwidth} implies \cref{planar-fanblowup}.

\begin{lem}
\label{UniversalBlowup}
For any $b,n\in\NN_1$, let $\GG$ be the class of $n$-vertex graphs $G$ such that $\bw(G-X)\leq b$ for some $X\subseteq V(G)$ with $|X|\leq b$. 
Let $F$ be the fan on $\ceil{n/b}$ vertices. 
Then the $b$-blowup of $F$ contains every graph in $\GG$.
\end{lem}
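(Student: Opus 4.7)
The plan is to embed $G$ into the $b$-blowup of $F$ by using the center's blowup to host $X$ (padded, if necessary, with the first few path-ordered vertices), and then using the $m-1$ path vertices of $F$ to host the remaining vertices of $G - X$ in blocks of size $b$ according to the bandwidth ordering.

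Concretely, let $G \in \GG$, fix $X \subseteq V(G)$ with $|X| \le b$ and $\bw(G-X) \le b$, and let $v_1, \dots, v_{n-|X|}$ be an ordering of $V(G-X)$ witnessing this bandwidth bound. Concatenate to get an ordering $u_1, \dots, u_n$ of $V(G)$ whose first $|X|$ entries are the vertices of $X$ (in any order) and whose remaining entries are $v_1, \dots, v_{n-|X|}$. Set $m := \lceil n/b \rceil$ and partition the ordering into consecutive blocks $C_1, \dots, C_m$ with $C_i := \{u_{(i-1)b+1}, \dots, u_{\min(ib,n)}\}$; each block has size at most $b$ and, since $|X| \le b$, we have $X \subseteq C_1$. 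Let $F$ have center $c$ and path $p_1 p_2 \cdots p_{m-1}$, and define an injection $\phi$ from $V(G)$ to $V(F \boxtimes K_b)$ by mapping $C_1$ to distinct vertices of $\{c\} \times V(K_b)$ and, for $i \ge 2$, mapping $C_i$ injectively into $\{p_{i-1}\} \times V(K_b)$.

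The remaining task is to check that every edge $xy \in E(G)$ becomes an edge of $F \boxtimes K_b$ under $\phi$. I would split into two cases. If $x \in X$, then $\phi(x) \in \{c\} \times V(K_b)$; since $c$ is adjacent in $F$ to every other vertex, and since each blowup is a clique, $\phi(x)\phi(y)$ is an edge of the blowup regardless of where $y$ lands. If instead $x, y \in V(G-X)$, write $x = v_i$ and $y = v_j$ with $i < j$; the bandwidth bound gives $j - i \le b$, so the positions of $x$ and $y$ in $u_1,\dots,u_n$ differ by at most $b$, placing them in the same block or in two consecutive blocks. The corresponding fan vertices are then either equal or adjacent (via a path edge of $F$, or via the center-to-$p_1$ edge when one block is $C_1$ and the other is $C_2$), so $\phi(x)\phi(y)$ is an edge of the blowup.

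There is really no deep obstacle here; the statement is essentially a bookkeeping translation. The one subtlety worth flagging is that $X$ need not fill $C_1$ entirely when $|X| < b$, so $C_1$ may also contain a prefix of $v_1, v_2, \dots$; these ``extra'' vertices go into the center's clique, which is harmless because the center is universal in $F$. Once the block decomposition above is set up correctly, both the capacity check (each $|C_i| \le b$) and the edge-preservation check reduce to the observation that an edge of $G-X$ straddles at most one block boundary under the bandwidth ordering.
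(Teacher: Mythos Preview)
Your proposal is correct and follows essentially the same approach as the paper. The only cosmetic difference is that the paper first pads $X$ to size exactly $b$ by moving some vertices of $G-X$ into $X$ (noting that bandwidth can only decrease under vertex deletion), which lets every block align cleanly with either the center or a path vertex; you achieve the same effect by letting the overflow from $C_1\setminus X$ land in the center's clique and handling the resulting $C_1$--$C_2$ case explicitly.
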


\begin{proof}
Let $p:=\ceil{n/b}-1$. Let $F$ be a fan with center $r$, where $F-r$ is the path $u_1,\dots,u_p$. So $|V(F)|=p+1=\ceil{n/b}$. Let $U$ be the $b$-blowup of $F$. Let $G\in\GG$. So $|V(G)|=n$ and $\bw(G-X)\leq b$ for some $X\subseteq V(G)$ with $|X|\leq b$. Move vertices from $G-X$ into $X$ so that $|X|=b$. Still $\bw(G-X)\leq b$. Let $v_1,\dots,v_{n-b}$ be an ordering of $G-X$ with bandwidth at most $b$. Injectively map $X$ to the blowup of $r$. For $i\in\{1,\dots,p-1\}$, injectively map $v_{(i-1)b+1},\dots,v_{ib}$ to the blowup of $u_i$. And injectively map $v_{(p-1)b+1},\dots,v_{n-b}$ to the blowup of $u_p$. By construction, $G$ is contained in $U$. 
\end{proof}

\begin{rem}
    The number of vertices in the fan-blowup $U$ in \cref{UniversalBlowup} is $b\ceil{n/b}$.  When mapping an $n$-vertex graph $G$ to $U$, the $b-(n\bmod b)$ vertices of $U$ not used in the mapping come from the blowup of $u_p$.  By removing these vertices, we obtain a subgraph $U_n$ of $U$ with exactly $n$ vertices that contains every graph in $\mathcal{G}$.  One consequence of this is the following strengthening of \cref{planar-fanblowup}: For each $n\in\N_1$, there exists an \emph{$n$-vertex subgraph} $U_n$ of a $O(\sqrt{n}\log^2 n)$-blowup of a fan that contains every $n$-vertex planar graph.  Each of  \cref{genus-fanblowup,k-planar-fanblowup,gk-planar-fan-blowup,minor-fanblowup} 
    has a similar strengthening.
\end{rem}

The next lemma provides a converse to  \cref{UniversalBlowup}. 

\begin{lem}
\label{blowup-bandwidth}
If an $n$-vertex graph $G$ is contained in a $b$-blowup of a fan $F$, then $\bw(G-X)\leq 2b-1$ for some $X\subseteq V(G)$ with $|X|\leq b$.     
\end{lem}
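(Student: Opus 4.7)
The plan is to use the natural structure of the fan-blowup: the fan $F$ has a center $r$ together with a path $u_1,\dots,u_p$ in which each $u_i$ is adjacent to $r$. In the $b$-blowup of $F$, the set $V(K_r)$ of size $b$ is a natural candidate for $X$, because removing it leaves exactly the $b$-blowup of the path $u_1,\dots,u_p$, which is a very structured graph.

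First I would let $X:=V(G)\cap V(K_r)$, so $|X|\leq |V(K_r)|=b$. Then $G-X$ is contained in the $b$-blowup of the path $P=u_1u_2\cdots u_p$, which we denote $U'$. So it suffices to exhibit a vertex ordering of $U'$ of bandwidth at most $2b-1$, since bandwidth is monotone under taking subgraphs.

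For the ordering, I would list the vertices of $V(K_{u_1})$ in some arbitrary order, then the vertices of $V(K_{u_2})$, and so on, ending with $V(K_{u_p})$. The only edges of $U'$ are within a single $V(K_{u_i})$ or between $V(K_{u_i})$ and $V(K_{u_{i+1}})$ for some $i$. In the first case the two endpoints lie in a block of $b$ consecutive positions, so their indices differ by at most $b-1$; in the second case they lie in a window of $2b$ consecutive positions, so their indices differ by at most $2b-1$. Hence $\bw(U')\leq 2b-1$, which yields $\bw(G-X)\leq 2b-1$ as required.

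There is essentially no obstacle here: the statement is purely structural, the choice of $X$ is forced by the geometry of the fan, and the bandwidth bound follows from a direct count on the block ordering. The only minor care to take is handling the degenerate cases (for instance when $G$ uses none of $V(K_r)$, in which case $X=\emptyset$ still works, or when some $V(K_{u_i})$ is unused in the embedding of $G$, which only helps).
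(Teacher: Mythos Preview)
Your proof is correct and follows exactly the same approach as the paper: take $X$ to be the vertices of $G$ mapped to the blowup of the fan's center, and order the remaining vertices block by block along the path, yielding bandwidth at most $2b-1$. Your version even spells out the case analysis (edges within a block versus between consecutive blocks) that the paper leaves implicit.
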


\begin{proof}
Let $r$ be the center of $F$. Let $X$ be the set of vertices of $G$ mapped to the blowup of $r$. So $|X|\leq b$. By definition, $P:=F-r$ is a path. Let $B_i$ be the set of vertices mapped to the blowup of the $i$-th vertex of $P$. Any ordering of $V(G)$ that places all vertices of $B_i$ before those in $B_{i+1}$ for each $i$ has bandwidth at most $2b-1$. Thus $\bw(G-X)\leq 2b-1$. 
\end{proof}

As mentioned above, we prove generalizations of the above results that explore the trade-off between $|X|$ and $\bw(G-X)$. The following definitions enable this study. For $k\in\R_{\geq 0}$ and $w\in\R_{\geq 1}$, a graph $G$  has \defin{$(k,w)$-bandwidth} if there exists $X\subseteq V(G)$ such that $|X|\leq k$ and $\bw(G-X)\leq w$. A graph $G$ is \defin{$(k,w)$-bandwidth-flexible} if for all $\delta\in \R_{\geq 1}$, $G$ has $(k/\delta,w\delta)$-bandwidth. We call $\delta$ a \defin{multiplier}. If $\delta \geq |V(G)|$, then $G$ trivially has $(k/\delta,w\delta)$-bandwidth with $X=\emptyset$. So we implicitly assume that $1\leq \delta < |V(G)|$ throughout the paper.


A function $f:X\mapsto Y$ with $X,Y\subseteq \R_{\geq 0}$ is \defin{non-decreasing} if $f(n)\leq f(m)$ for all $n,m\in X$ with $n\leq m$, 
and is \defin{superadditive} if $f(n)+f(m)\leq f(n+m)$ for all $n,m\in X$. For a constant $c\in \R^+$, we use the notation \defin{$cf$} to denote the function $n\mapsto cf(n)$, the notation \defin{$f/c$} to denote the function $n\mapsto f(n)/c$, and the notation \defin{$f+c$} to denote the function $n\mapsto f(n)+c$.

For functions 
$f:\NN\mapsto \R_{\geq 0}$ and
$g:\NN\mapsto \R_{\geq 1}$, we say that a class $\GG$ has \defin{$(f,g)$-bandwidth} if each $n$-vertex graph in $\GG$ has $(f(n),g(n))$-bandwidth. And $\GG$ is \defin{$(f,g)$-bandwidth-flexible} if for every $\delta \in \R_{\geq 1}$, $\GG$ has $(f/\delta,\delta g)$-bandwidth. By the above observation, it suffices to assume that $1\leq\delta \leq |V(G)|$ for each graph $G\in\GG$. 


We prove the following results, which with multiplier $\sqrt{n}/\log n$, imply 
\cref{minor-bandwidth,planar-bandwidth}, and with  \cref{UniversalBlowup} imply 
\cref{planar-fanblowup,minor-fanblowup}.

\begin{thm}
\label{planar-flex}
The class of planar graphs is $(f,g)$-bandwidth-flexible for some $f\in O(n\log n)$ and $g\in O(\log^3n)$.
\end{thm}

\begin{thm}
\label{minor-flex}
For any $h\in\NN$, the class of $K_h$-minor-free graphs is $(f,g)$-bandwidth-flexible, where $f\in O_h(n\log n)$ and $g\in O_h(\log^3 n)$.
\end{thm}

From now on, we work primarily in the setting of bandwidth-flexibility, which implies and strengthens the above-mentioned results for fan-blowups. Moreover, bandwidth-flexibility is essential for the proofs; see the application of \cref{starDecomp} in the proof of \cref{mainLem}.

\section{Proof Techniques}

Graph products are a key tool throughout the paper. 
As illustrated in   \cref{strong_product_fig}, the \defin{strong product} $A\boxtimes B$ of two graphs $A$ and $B$ is the graph with vertex set $V(A\boxtimes B):=V(A)\times V(B)$ that contains an edge with endpoints $(v_1,v_2)$  and $(w_1,w_2)$ if and only if
\begin{compactenum}
    \item $v_1w_1\in E(A)$ and $v_2=w_2$;
    \item $v_1=w_1$ and $v_2w_2\in E(B)$; or
    \item $v_1w_1\in E(A)$ and $v_2w_2\in E(B)$.
\end{compactenum}

\begin{figure}[!b]
  \centering
  \includegraphics[page=1]{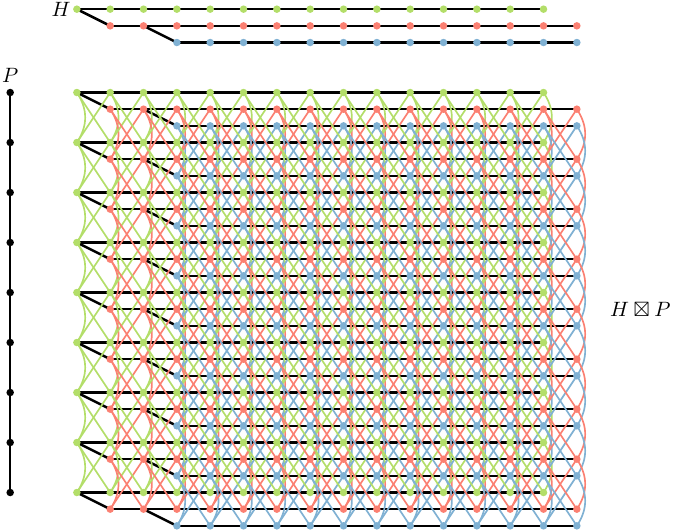}
  \caption{The strong product of a tree $H$ and a path $P$.}
  \label{strong_product_fig}
\end{figure}

Note that the $k$-blowup of $H$ can be written as the strong product $H\boxtimes K_k$.  For example, \cref{planar-fanblowup} states that for every $n$-vertex planar graph $G$ there is a fan $F$ such that $G$ is isomorphic to a subgraph of $F\boxtimes K_{O(\sqrt{n}\log^2 n)}$.

Throughout this paper we work with the product of a bounded treewidth graph and a path. The \defin{row treewidth} of a graph $G$, denoted by \defin{$\rtw(G)$}, is the minimum integer $t$ such that $G$ is contained in $H\boxtimes P$ for some graph $H$ with treewidth $t$ and for some path $P$. Note that row treewidth does not increase under taking subgraphs or adding isolated vertices. The \defin{row treewidth} of a class $\GG$, \defin{$\rtw(\GG)$}, is the maximum row treewidth of a graph in $\GG$, or $\infty$ if the maximum does not exist. This definition is motivated by the result of \citet{dujmovic.joret.ea:planar} who proved that planar graphs have bounded row treewidth. In particular, they showed that $\rtw(G)\leq 8$ for every planar graph $G$, improved to $\rtw(G)\leq 6$ by \citet{ueckerdt.wood.ea:improved}. Thus \cref{planar-flex} for planar graphs is an immediate consequence of the following more general result. 



\begin{thm}
\label{rtw-flex}
Every $n$-vertex graph $G$ with row treewidth at most $b$ is $(f,g)$-bandwidth-flexible for some $f\in O(bn\log n)$ and $g\in O(\log^3n)$.
\end{thm}



Graph products are a key tool in our proof of \cref{minor-fanblowup} for $K_h$-minor-free graphs. In particular, we use the following `Graph Minor Product Structure Theorem' of \citet{dujmovic.joret.ea:planar}, which gives a structural description of $K_h$-minor free graphs in terms of products (replacing surfaces, vortices and weak apex vertices in the `Graph Minor Structure Theorem' of \citet{RS-XVI})\footnote{The adhesion bound in \cref{GMST} is not stated in \citep{dujmovic.joret.ea:planar}, but it is implied since every graph with row treewidth $b$ has clique-number at most $2(b+1)$, implying that $\UWT{G}{B_t}$ has clique-number at most $2(b+1)+\max(h-5,0)$.}. The \defin{adhesion} of a tree-decomposition $(B_t:t\in V(T))$ of a graph $G$ is $\max_{tt'\in E(T)}|B_t\cap B_{t'}|$. The \defin{torso} of $G$ at a vertex $t\in V(T)$ (with respect to $(B_t:t\in V(T))$), denoted \defin{$\UWT{G}{B_t}$}, is the graph obtained from $G[B_t]$ by adding an edge $uv$ whenever there exists $tt'\in E(T)$ such that $u,v\in B_t\cap B_{t'}$, provided $uv$ does not already exist in $G[B_t]$.

\begin{thm}[\citep{dujmovic.joret.ea:planar}]
\label{GMST}
For each $h\in\NN_1$, there exist $b,k\in\NN_1$ such that every $K_h$-minor-free graph $G$ admits a tree-decomposition $(B_t:t\in V(T))$ of adhesion at most $k$ such that for each $t\in V(T)$, there exists $X_t\subseteq B_t$ such that $|X_t|\leq \max(h-5,0)$ and $\UWT{G}{B_t}-X_t$ has row treewidth at most $b$. 
\end{thm}

We now sketch the main ideas in the proof of \cref{rtw-flex}.


For a graph $G$ and any two vertices $v,w\in V(G)$, define the \defin{(graph) distance} between $v$ and $w$, denoted $\mathdefin{d_G(v,w)}$, as the minimum number of edges in any path in $G$ with endpoints $v$ and $w$ or define $d_G(v,w):=\infty$ if $v$ and $w$ are in different components of $G$.  For any $r\ge 0$ and any $v\in V(G)$, let $\mathdefin{B_G(v,r)}:=\{w\in V(G):d_G(v,w)\le r\}$ denote the radius-$r$ ball in $G$ with center $v$.
The \defin{local density} of a graph $G$ is $\mathdefin{\ld(G)}:=\max\{(|B(v,r)|-1)/r:r> 0,\, v\in V(G)\}$.\footnote{The ${}-1$ in this definition of local density does not appear in the definitions of local density used in some other works \cite{feige:approximating,rao:small}, but this makes no difference to our asymptotics results.  Our definition makes for cleaner formulas and seems to be more natural. For example, under our definition, the local density of a cycle of length $2k+1$ is $2$ and every $r$-ball contains exactly $2r+1$ vertices for $r\in\{1,\ldots,k\}$. Without the ${}-1$, the local density of a cycle is $3$, but only because radius-$1$ balls contain three vertices.}



The local density of $G$ provides a lower bound on the bandwidth of $G$. For any ordering $v_1,\dots,v_n$ of $V(G)$ with bandwidth $b$, for each vertex $v_i$, $B_G(v_i,r) \subseteq \{v_{i-rb},\dots,v_{i+rb}\}$, so $|B_G(v_i,r)|\leq 2rb+1$ and $\ld(G)\leq 2\bw(G)$. In 1973, Erd\H{o}s conjectured that $\bw(G)\leq O(\ld(G))$ for every graph $G$ \cite[Section~3]{ccdg82}. This was disproved by \citet{chvatalova:on} who describes a family of $n$-vertex trees $T$ with $\ld(T)\le 25/3$ and $\bw(T)\in\Omega(\log n)$.\footnote{The proof of Theorem~3.4 in \cite{chvatalova:on} constructs an infinite tree with vertex set $\N^2$ that has local density at most $25/3$ and infinite bandwidth.  In this construction, for each $h\in\N$, the maximal subtree that includes $(0,a_h)$ but not $(0,a_{h}+1)$ has $n_h\le 2\cdot 8^{h}$ vertices and bandwidth at least $h/9\in\Omega(\log n_h)$.} Thus, $\bw(G)$ is not upper bounded by any function of $\ld(G)$, even for trees. This remains true for trees of bounded pathwidth: \citet{CS89} describe a family of $n$-vertex trees $T$ with local density at most $9$, pathwidth $2$, and  $\bw(T)\in\Omega(\log n/\log\log n)$. On the other hand, in his seminal work, \citet{feige:approximating} proves that bandwidth is upper bounded by the local density times a polylogarithmic function of the number of vertices.


\begin{thm}[\citet{feige:approximating}]
\label{feige_bandwidth_vs_density}
For any $n\in\NN$, for every $n$-vertex graph $G$,
  \[
    \bw(G)\in O\left(\ld(G)\cdot \log^3 n\sqrt{\log n\log\log n}\right) \enspace .
  \]
\end{thm}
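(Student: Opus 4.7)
The plan is to follow Feige's original strategy: convert the local density bound into a bandwidth bound via a volume-respecting Euclidean embedding of the shortest-path metric, then obtain a linear ordering by projection onto a random line.

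First, I would construct, via Bourgain-style probabilistic embeddings using random Fr\'echet coordinates, a $1$-Lipschitz embedding $f\colon V(G)\to \mathbb{R}^d$ of the metric $d_G$ into $d=O(\log^2 n)$ dimensions that is additionally \emph{volume-respecting} in the sense of Feige: for every subset $S\subseteq V(G)$ with $|S|=k$, the $(k-1)$-dimensional volume of the simplex spanned by $f(S)$ is at least $k!/\alpha_k^{\,k-1}$ times the corresponding quantity for the ``best'' contractive embedding of $S$, with distortion $\alpha_k = O\bigl(\log n\,\sqrt{\log k\,\log\log k}\bigr)$. The construction picks a random set system of ``cuts'' at geometrically increasing scales and uses concentration of measure to guarantee that every $k$-point subset spreads out in enough coordinates; this is exactly the place where the $\log^2 n$ factor in the dimension and the $\sqrt{\log k\,\log\log k}$ distortion arise.

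Second, I would draw a uniformly random unit vector $u\in\mathbb{R}^d$ and order $V(G)=\{v_1,\ldots,v_n\}$ by increasing value of the projection $\langle f(v_i),u\rangle$. To control the bandwidth, fix an integer $B$, and suppose that $v_i$ and $v_{i+B}$ are adjacent in $G$ for some $i$; then the whole $B{+}1$-point set $S=\{v_i,\ldots,v_{i+B}\}$ projects into an interval of length at most $\langle f(v_{i+B})-f(v_i),u\rangle\le \|f(v_{i+B})-f(v_i)\|\le d_G(v_i,v_{i+B})\le B$ in the last inequality I would in fact use that $v_iv_{i+B}$ is an edge to get length $\leq 1$. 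The standard ``random projection shrinks simplices'' lemma, combined with the volume-respecting lower bound on $\mathrm{vol}(f(S))$, then forces the $f$-diameter of $S$ to be at most $\alpha_{B+1}\cdot\sqrt{d}/\sqrt{B}$ up to polylogarithmic factors with positive probability; contracting back to $G$ this means $S$ lies in a ball of graph-radius $r=O(\alpha_{B+1}\sqrt{d/B}\cdot\mathrm{polylog}\,n)$.

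Third, I would invoke local density: such a ball contains at most $r\cdot\ld(G)+1$ vertices, so $B+1\le r\cdot\ld(G)+1$, yielding
\[
    B \;\le\; \ld(G)\cdot O\bigl(\alpha_{B+1}\sqrt{d/B}\cdot \mathrm{polylog}\,n\bigr) .
\]
Solving for $B$ with $d=O(\log^2 n)$ and $\alpha_{B+1}=O(\log n\sqrt{\log n\log\log n})$ and pushing the failure probability below $1/n^2$ by a union bound over the $n$ potential bad indices and a suitable number of trials gives the claimed bound $\bw(G)=O\bigl(\ld(G)\cdot\log^3 n\sqrt{\log n\log\log n}\bigr)$ for some ordering, namely the one realized by a successful random projection.

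The main obstacle is the construction and tight analysis of the volume-respecting embedding; this is the technical heart of Feige's paper and the source of every logarithmic factor in the final bound. Once the embedding is in hand, the projection step and its interaction with local density are a relatively mechanical computation, but balancing the dimension $d$, the volume distortion $\alpha_k$, and the anti-concentration of a random projection must be done carefully to avoid losing additional logarithmic factors.
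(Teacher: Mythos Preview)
Your overall architecture---volume-respecting embedding followed by random projection---matches Feige's, but Steps~2--3 contain a genuine gap in how local density enters. After projecting, you take the set $S=\{v_i,\ldots,v_{i+B}\}$ of all $B{+}1$ vertices landing between the endpoints of a bad edge and try to argue that $S$ has small $f$-diameter and hence lies in a small ball of $G$. Neither implication holds. A set with short projection onto a random line can have arbitrarily large Euclidean diameter (the simplex need only be thin in the projection direction), so a volume lower bound together with a short projection does not force small $f$-diameter. More fatally, even if the $f$-diameter of $S$ were small, $f$ is only $1$-Lipschitz, not bi-Lipschitz, so the \emph{graph} distances among points of $S$ can be arbitrarily large; the intermediate vertices $v_{i+1},\ldots,v_{i+B-1}$ are merely those whose \emph{projections} fall between $h(v_i)$ and $h(v_{i+B})$, and nothing constrains them to lie near $v_i$ in $G$. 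Consequently the inequality $B+1\le r\cdot\ld(G)+1$ in your Step~3 is unjustified, and the whole balancing computation that follows collapses.

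Feige's actual mechanism for invoking local density is different and is precisely the piece you are missing. One fixes $k=\Theta(\log n)$ (not $k=B{+}1$), and uses the volume-respecting property together with anti-concentration of random projections to bound, for each $k$-subset $K$, the probability that $\phi(K)$ projects into a short interval by $(\text{polylog}\,n)^{k}/\tvol_{d_G}(K)$. Local density then enters through the reciprocal-sum inequality
\[
    \sum_{K\in\binom{V(G)}{k}} \frac{1}{\tvol_{d_G}(K)} \;<\; n\,(D H_n/2)^{k-1}
\]
(Feige's Theorem~10; \cref{reciprocal_sum} here), which bounds the \emph{expected number} of $k$-subsets with short projection. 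Markov's inequality caps this number, and since any block of $|B|$ consecutive vertices with short projection contributes $\binom{|B|}{k}$ such $k$-subsets, one solves $\binom{|B|}{k}\le (\text{polylog}\,n\cdot D)^k\cdot nk\log\Delta$ for $|B|$ to get the bandwidth bound---exactly the computation carried out in \cref{volume_density_bandwidth}. Your proposal has the embedding and the projection, but not this counting-via-tree-volume step; without it the argument does not close.
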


\citet{rao:small} improves \cref{feige_bandwidth_vs_density} in the special case of planar graphs:


\begin{thm}[\citet{rao:small}]
\label{rao_bandwidth_vs_density}
For any $n\in\NN$, for every $n$-vertex planar graph $G$, 
  \[
    \bw(G)\in O\left(\ld(G)\cdot \log^3 n\right) \enspace .
  \]
\end{thm}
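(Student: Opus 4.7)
The plan is to prove Rao's theorem via the volume-respecting Euclidean embedding framework introduced by Feige, specialized to planar graphs. The two ingredients are (i) a reduction from bandwidth to Euclidean volume-respecting embeddings, and (ii) a planar-specific embedding whose distortion is better than the general Bourgain bound.

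For the first ingredient, I would establish, following Feige, that for any $n$-vertex graph $G$, if there is a non-contracting map $\phi \colon V(G) \to \mathbb{R}^m$ that is $k$-volume-respecting with distortion $D$ (in the sense that every $k$-point simplex on the image has volume within a factor $D^{k-1}$ of its ``intrinsic'' metric volume), then $G$ admits a linear ordering of bandwidth $O(\ld(G) \cdot D \cdot \log n)$. The argument projects $\phi$ onto a uniformly random unit vector $\vec{\eta} \in S^{m-1}$ and sorts $V(G)$ by the projection. The $k$-volume-respecting property guarantees that for each vertex $v$ and each radius $r$, with high probability the projection of $\phi(B_G(v,r))$ spans a segment of length $\Omega(r/D)$. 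Taking $k = \Theta(\log n)$ and union-bounding over all $\tilde{O}(n^2)$ pairs $(v,r)$ yields a single direction that works for all balls simultaneously, and the definition of local density then converts this projection spread into the bandwidth bound.

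For the second ingredient, I would use Bourgain-style coordinates $\phi_{i,j}(v) := \min_{u \in S_{i,j}} d_G(v,u)$, where $S_{i,j}$ is obtained by random sampling from the clusters of a random partition of $G$ into pieces of diameter at most $2^i$. The essential planar fact is that planar graphs admit padded decompositions with optimal padding: a random $\Delta$-bounded partition of a planar graph separates any two vertices at distance at most $\alpha \Delta$ with probability only $O(\alpha)$, compared with the $O(\alpha \log n)$ bound achievable in general. This property (ultimately derived from the planar separator theorem applied recursively) is the feature of planar metrics that allows the embedding to achieve distortion $O(\sqrt{\log n})$, and with a finer analysis, to be $k$-volume-respecting at the same distortion. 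Plugging $D = O(\sqrt{\log n})$ into the reduction, and accounting for the $O(\log n)$ scales in the embedding together with the $O(\log n)$ factor from the projection argument, yields $\bw(G) \in O(\ld(G) \cdot \log^3 n)$.

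The main obstacle is verifying the volume-respecting property of the planar embedding. Plain distortion (preservation of pairwise distances) follows routinely by summing contributions from each scale. Volume preservation, however, requires showing that for any $k$ vertices $v_1,\dots,v_k$ whose mutual distances are comparable to some scale $\Delta$, the random coordinates at that scale make the $k$-dimensional image ``full-rank'' with constant probability — equivalently, that the points fall into distinct clusters of the random partition with widely varying distances to the random seeds. Carrying this out with the planar padded-decomposition bound, rather than the general-graph bound, is the technical heart of Rao's proof.
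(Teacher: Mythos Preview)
Your high-level approach matches the paper's: \cref{rao_bandwidth_vs_density} is obtained as an immediate consequence of Rao's $(k,O(\sqrt{\log n}))$-volume-preserving Euclidean contraction for planar graphs (\cref{rao_planar_graphs}) together with Feige's bandwidth bound for graphs admitting such contractions (\cref{feige_bandwidth_vs_density_graphs}), instantiated at $k=\lceil\log n\rceil$.

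That said, two details of your sketch would not survive a careful write-up. First, the planar padded decomposition that delivers $\eta=O(\sqrt{\log n})$ is \emph{not} derived from the Lipton--Tarjan separator theorem; it comes from the Klein--Plotkin--Rao decomposition for $K_h$-minor-free graphs, which is a genuinely different mechanism (iterated BFS annuli, with the excluded-minor hypothesis used to bound the weak diameter of the resulting parts). Trying to recover the $O(1)$-padding property from recursive balanced separation does not work, and the paper explicitly flags the dependence on KPR when discussing why the planar argument does not extend directly to subgraphs of $H\boxtimes P$. Second, Rao's coordinates are not Bourgain-style distances to random seed sets $S_{i,j}$; the coordinate assigned to $v$ is the distance from $v$ to the \emph{boundary} of its own cluster in the random low-diameter partition, multiplied by an independent uniform random factor in $[1,2]$. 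It is this per-cluster random scaling---not random sampling of seeds---that makes coordinates of vertices lying in distinct clusters at a fixed scale behave independently, and that independence is exactly what powers the volume-respecting analysis (compare \cref{uniform} and \cref{crux}). A minor terminological slip: the embedding must be a \emph{contraction} (a non-expansive map), not a ``non-contracting'' one.
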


By \cref{rao_bandwidth_vs_density}, to prove \cref{planar-fanblowup} it suffices to show the following \defin{local sparsification lemma}:


\begin{lem}\label{planar_sparsifier}
  For any $D \in\R_{\geq1}$ and $n\in\NN$, every $n$-vertex planar graph $G$ has a set $X$ of $O((n\log n)/D )$ vertices such that $G-X$ has local density at most $D$.
\end{lem}

\cref{planar_sparsifier} and the bandwidth upper bound in 
\cref{rao_bandwidth_vs_density} for graphs of given local density, together show that the class of planar graphs is $(O(n\log n),O(\log^3 n))$-bandwidth-flexible, which establishes \cref{planar-flex}. 
\cref{planar_sparsifier} is proved in \cref{local_sparsification_section}.





Proving \cref{rtw-flex} is the subject of \cref{htimesp_section} and is the most technically demanding aspect of our work, for reasons that we now explain. \Cref{rao_bandwidth_vs_density} is not stated explicitly in \cite{rao:small}.  It is a consequence of the following two results of \citet{feige:approximating} and \citet{rao:small}. (The definition of $(k,\eta)$-volume-preserving contractions is in \cref{contractions}, but is not needed for the  discussion that follows):

\begin{thm}[\cite{rao:small}]\label{rao_planar_graphs}
For all $k,n\in\NN_{1}$, every $n$-vertex planar graph has a $(k,O(\sqrt{\log n}))$-volume-preserving Euclidean contraction.
\end{thm}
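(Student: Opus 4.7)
The plan is to reproduce Rao's original multi-scale random embedding for planar graphs. The entire gain over Feige's general-graph bound of $O(\log n)$ comes from a single planar-specific input: the \emph{Klein--Plotkin--Rao padded decomposition theorem}, which provides, at each distance scale $\Delta$, a distribution over partitions of $V(G)$ into clusters of diameter at most $\Delta$ such that, for every vertex $v$, the ball $B_G(v,\Delta/c)$ is contained within a single cluster with probability at least $1/2$, for some absolute constant $c$. General graphs only admit such decompositions with padding factor $\Theta(\log n)$, and tracing Feige's argument with constant padding instead yields the improvement from $\log n$ to $\sqrt{\log n}$.

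\textbf{Construction.} For each scale $t \in \{0,1,\ldots,\lceil\log n\rceil\}$, independently sample a KPR partition $\mathcal{P}_t$ at scale $\Delta_t := 2^t$. For each cluster $C$ appearing at any scale, draw an independent standard Gaussian $g_C$. Define a scalar coordinate per scale by
\[
    \varphi_t(v) := \sum_{C \in \mathcal{P}_t} g_C \cdot \min\bigl(d_G(v,\, V(G)\setminus C),\, \Delta_t/c\bigr),
\]
and let the embedding $f \colon V(G) \to \mathbb{R}^{O(\log n)}$ concatenate the $\varphi_t$, after normalising by a factor $\Theta(\sqrt{\log n})$ so that $f$ is $1$-Lipschitz in expectation. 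A Johnson--Lindenstrauss projection, together with repeating and averaging enough independent trials, allows one to assume the contraction bound holds deterministically for every pair $u,v$.

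\textbf{Volume preservation.} For a set $S$ of $k$ vertices, the Cauchy--Binet identity expresses the squared Euclidean volume $\mathrm{Evol}(f(S))^2$ as the determinant of the $(k{-}1)\times(k{-}1)$ Gram matrix of differences $f(v)-f(v_0)$ for $v \in S \setminus \{v_0\}$. Taking expectations over the Gaussians and the KPR samples, this determinant decomposes as a product over $k-1$ scales (one per row). For any fixed scale assignment, the KPR guarantee combined with the $\min(\cdot,\Delta_t/c)$ truncation yields a diagonal-dominance estimate for the Gram submatrix, giving a contribution of order $(\Delta_t)^2$ to the diagonal whenever the corresponding pair in $S$ lies at distance $\Theta(\Delta_t)$. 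Summing over assignments reproduces the intrinsic volume $\mathrm{Ivol}(S)^2$ divided by a factor $O(\log n)^{k-1}$; the square root, $O(\sqrt{\log n})^{k-1}$, is the claimed bound because the volumes enter Feige's definition under a $(k{-}1)$st root.

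\textbf{Main obstacle.} The hard step is the lower bound on the expected Gram determinant: one must show that a constant fraction of the $(k-1)!$ terms produced by the Cauchy--Binet expansion are simultaneously ``good'' with high enough probability to dominate the negative contributions. This requires carefully pairing each vertex of $S$ with a distinct scale witnessing its separation (a matching argument over scales) and using the independence of the KPR samples across scales to show that the joint good event has probability bounded away from $0$. Once this is done, the improved padding of KPR directly yields the $\sqrt{\log n}$ bound per coordinate; Feige's general argument would otherwise lose a further $\sqrt{\log n}$ here.
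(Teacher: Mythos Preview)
Your high-level plan---KPR padded decompositions at geometric scales, distance-to-boundary coordinates, normalise by $\sqrt{\log n}$---is the right one and matches Rao. The analysis, however, departs from Rao's (and from the paper's reproduction of it in the proof of \cref{dstar_contraction}) in a way that creates a real gap.

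First, the construction is too thin. You use a \emph{single} coordinate $\varphi_t$ per scale, giving dimension $O(\log n)$. Rao (and this paper) take $\Theta(k\log n)$ \emph{independent} copies of the padded decomposition at each scale, for a total dimension $\Theta(k\log^2 n)$. These repetitions are not cosmetic: they are what allow a Chernoff bound to push the failure probability for a fixed $k$-set down to $n^{-\Omega(k)}$, which is what one needs to union-bound over all $\binom{n}{k}$ subsets (and over an $\epsilon$-net of the affine span). With one Gaussian per scale you cannot get that concentration.

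Second, the volume argument is not via Cauchy--Binet and diagonal dominance of an expected Gram matrix. The expected Gram determinant does \emph{not} ``decompose as a product over $k-1$ scales (one per row)''; the entries of the Gram matrix are inner products summed over \emph{all} scales, and the determinant mixes them. The actual proof is the height method: order the $k$ points $x_1,\dots,x_k$ by Prim's algorithm on the $d_G$-MST, and for each $p$ show that $\phi(x_p)$ is far from the affine span $B_{p-1}$ of $\phi(x_1),\dots,\phi(x_{p-1})$. The key observation (this is \cref{crux} in the paper) is that at the scale $\Delta\approx d_G(x_p,\{x_1,\dots,x_{p-1}\})/10$, the cluster $C'$ containing $x_p$ has diameter $<d_G(x_p,\{x_1,\dots,x_{p-1}\})$ and so contains none of $x_1,\dots,x_{p-1}$; hence the random multiplier attached to $C'$ is independent of $\phi(x_1),\dots,\phi(x_{p-1})$, and in particular of any affine combination $x=\sum\lambda_j\phi(x_j)$ chosen as a function of them. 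This gives $|\phi_{i,j}(x_p)-x_{i,j}|\ge\Omega(\Delta)$ with constant probability for each good copy $j$, and the $\Theta(k\log n)$ copies convert this to a high-probability Euclidean lower bound. Feige's net argument over $B_{p-1}$ then turns this into a distance-to-subspace bound, and the product of heights yields $\evol(\phi(K))\ge\tvol_{d^*}(K)\cdot\Omega(1)^{k-1}/(k-1)!$, which via \cref{tvol_vs_ivol} gives the $(k,O(\sqrt{\log n}))$ volume preservation. Your ``matching argument over scales'' gestures toward the Prim ordering, but the mechanism that makes the height large---a per-cluster random multiplier independent of the earlier points---and the need for many copies are both missing.
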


\begin{thm}[\cite{feige:approximating}]\label{feige_bandwidth_vs_density_graphs}
For any $n\in\NN$, every $n$-vertex graph $G$ with local density $\ld(G)\leq D$ that has a $(k,\eta)$-volume-preserving Euclidean contraction,\footnote{The precise trade-off between all these parameters is not stated explicitly in \cite{feige:approximating}, but can be uncovered from Feige's proof, which considers the case where $k=\log n$ and $\eta=\sqrt{\log n}\sqrt{\log n+ k\log k}$.}
    \[
        \bw(G) \in O((nk\log n)^{1/k}\,Dk\eta\log^{3/2} n) \enspace .
    \]
\end{thm}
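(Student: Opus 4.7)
The plan is to follow Feige's random-projection strategy. Fix a $(k,\eta)$-volume-preserving contraction $f: V(G) \to \mathbb{R}^d$, pick a uniformly random unit vector $u \in S^{d-1}$, define $g: V(G) \to \mathbb{R}$ by $g(v) := \langle u, f(v)\rangle$, and let $\sigma$ be the ordering of $V(G)$ obtained by sorting on $g$ (breaking ties arbitrarily). The goal is to upper-bound the expected bandwidth of $\sigma$, from which a deterministic ordering with at most that bandwidth follows by averaging.

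For any edge $vw \in E(G)$, the bandwidth contribution $|\sigma(v)-\sigma(w)|$ equals the number of vertices $x$ whose $g$-value lies between $g(v)$ and $g(w)$. Because $f$ is a contraction, $|g(v)-g(w)| \le \|f(v)-f(w)\| \le d_G(v,w) = 1$, so it suffices to control, uniformly over $p \in \mathbb{R}$, the number of vertices $x$ with $g(x) \in [p-1, p+1]$. I would stratify this count by graph distance from a center $v$: for dyadic scales $r \in \{1,2,4,\ldots,n\}$, local density gives $|B_G(v,r)| \le 1 + rD$, and the contraction places $f(B_G(v,r))$ inside a Euclidean ball of radius $r$ around $f(v)$.

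The heart of the argument is the volume-preservation step. If $m$ vertices of $B_G(v,r)$ all project within a width-$2$ window of $g(v)$, their images lie in a width-$2$ slab perpendicular to $u$ intersected with a radius-$r$ Euclidean ball, so any $k$ of them span a $(k-1)$-simplex of Euclidean volume at most $O(r^{k-1}/(k-1)!)$. The $(k,\eta)$-volume-preserving property supplies a matching lower bound in terms of the graph-intrinsic volume of the $k$-subset, divided by $\eta^{k-1}$. By testing this inequality across a carefully chosen family of $k$-subsets (e.g.\ BFS-spread subsets of $B_G(v,r)$ whose intrinsic volumes are large) and counting the $\binom{m}{k}$ tuples that must simultaneously satisfy it, one forces $m \le O\bigl(r\cdot k\eta\cdot (nk\log n)^{1/k}\bigr)$ with high probability over $u$. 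Summing over the $O(\log n)$ dyadic scales and absorbing a $\log^{3/2} n$ factor from Gaussian concentration and a union bound over the $\mathrm{poly}(n)$ events (one per witnessing pair $v,r$) then yields the claimed expected bandwidth.

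The main obstacle is precisely this volume/counting step: translating a per-$k$-simplex volume lower bound into a pointwise vertex-count upper bound uniformly across all projection windows and all scales. The $k$-th root factor $(nk\log n)^{1/k}$ arises because balancing $\binom{m}{k}$ combinatorial copies against a single geometric inequality distributes the geometric penalty across $k$ coordinates, and the $\eta$ and $k$ factors come from the precise volume-preservation bound. Once this counting estimate is in hand at every dyadic scale, local density handles the small scales and the $D$, $k$, $\eta$, and $\log^{3/2} n$ factors collect at the end to produce the theorem.
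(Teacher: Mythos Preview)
Your high-level strategy---random projection onto a line, order by projected value, use volume preservation to bound how many vertices can crowd into a short window---is exactly Feige's and matches the paper. But the execution you sketch diverges from the actual argument in a way that leaves a real gap.

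The paper does \emph{not} stratify by dyadic distance scales around a centre vertex. It argues globally over all $k$-subsets. For each $K\in\binom{V(G)}{k}$ it invokes a projection lemma of Feige (his Theorem~9): the probability that a random unit direction squeezes $\phi(K)$ into an interval of length $c$ is at most a constant times $c^{k}/\evol(\phi(K))$, up to $(\beta L)^{k/2}$, $k^{-k}$ and $\log$ factors. Volume preservation then replaces $1/\evol(\phi(K))$ by essentially $\eta^{k-1}/\tvol_{d_G}(K)$. The key step---which your outline is missing---is to sum this bound over \emph{all} $K$ using the reciprocal-sum lemma (the paper's \cref{reciprocal_sum}, generalizing Feige's Theorem~10):
\[
   \sum_{K\in\binom{V(G)}{k}} \frac{1}{\tvol_{d_G}(K)} \;<\; n\,(DH_n/2)^{k-1}\,.
\]
This lemma is precisely where local density enters. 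Markov's inequality then bounds the expected number of ``bad'' $K$'s, hence $\binom{|B|}{k}$ for the largest set $B$ falling into any short projection window, and hence $|B|$.

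Your proposed counting step---``the $\binom{m}{k}$ tuples must simultaneously satisfy a volume inequality, forcing $m\le\ldots$''---does not go through as stated, because the lower bound $\evol(\phi(K))\ge\ivol_{d_G}(K)/\eta^{k-1}$ depends on $K$ and can be arbitrarily small when the chosen $k$ vertices are metrically clustered. You cannot treat all $\binom{m}{k}$ subsets as satisfying a single common inequality; you have to sum the reciprocals of their tree volumes, and controlling that sum is exactly the content of the reciprocal-sum lemma. Your use of local density via $|B_G(v,r)|\le rD+1$ at each dyadic scale is a strictly weaker exploitation of the hypothesis and does not deliver the aggregate bound. Likewise, the deterministic slab-volume upper bound you state, $O(r^{k-1}/(k-1)!)$, comes only from the radius-$r$ ball and does not use the one-dimensional compression from the projection at all; the correct argument is probabilistic (Feige's Theorem~9), not a post-hoc geometric bound on any fixed projection.
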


\cref{rao_bandwidth_vs_density} is an immediate consequence of \cref{rao_planar_graphs,feige_bandwidth_vs_density_graphs} with $k=\ceil{\log n}$.  Unfortunately, we are unable to replace ``planar graph'' in \cref{rao_planar_graphs} with ``subgraph of $H\boxtimes P$.''  The proof of \cref{rao_planar_graphs} relies critically on the fact that planar graphs are $K_{3,3}$-minor-free.  Specifically, it uses the Klein--Plotkin--Rao (KPR) decomposition \cite{klein.plotkin.ea:excluded} of $K_{h}$-minor-free graphs $G$, which partitions $V(G)$ into parts so that the diameter of each part $C$ in $G$ is $\diam_G(C)\in O_h(\Delta)$ (for $O(\log n)$ different values of $\Delta$).\footnote{The diameter of a subset $S\subseteq V(G)$ in $G$ is $\mathdefin{\diam_{G}(S)}:=\max\{d_G(v,w):v,w\in S\}$. In recent work on coarse graph theory~(e.g.~\citep{DN23,BBEGLPS}), $\diam_G(S)$ is called the `weak diameter' of $S$, to distinguish it from the diameter of $G[S]$.}  
This does not help because $H\boxtimes P$ is not $K_h$-minor-free for any fixed $h$, even when $H$ is a path.



Although $H\boxtimes P$ is not necessarily $K_h$-minor-free, a very simple (two-step) variant of the KPR decomposition accomplishes some of what we want.  That is, it provides a partition of $V(G)$ so that each part $C$ has $\diam_{H\boxtimes P}(C)\in O(\Delta)$. However, distances in $G$ can be much larger than distances in $H\boxtimes P$, so this decomposition does not provide upper bounds on $\diam_G(C)$.  To deal with this, we work with distances in $H\boxtimes P$, so that we can use the simple variant of the KPR decomposition.

Working with distances in $H\boxtimes P$ requires that we construct a set $X$ of vertices so that the metric space $\mathcal{M}:=(V(G)\setminus X,d_{(H\boxtimes P)-X})$ has local density $O(\sqrt{tn}/\log n)$.  That is, we must find a set $X$  of vertices in $H\boxtimes P$ so that radius-$r$ balls in the graph $(H\boxtimes P)-X$ contain at most $rD+1$ vertices of $G-X$, for $D=\sqrt{tn}/\log n$.  As it happens, the same method used to prove \cref{planar_sparsifier} (the local sparsification lemma for planar graphs) provides such a set $X$.

However, we are still not done.  The simple variant of the KPR decomposition guarantees bounds on $\diam_{H\boxtimes P}(C)$, but does not guarantee bounds on $\diam_{(H\boxtimes P)-X}(C)$, which is what we now need.  This is especially problematic because $G-X$ may contain pairs of vertices $v$ and $w$ where $d_{(H\boxtimes P)-X}(v,w)$ is unnecessarily much larger than $d_{H\boxtimes P}(v,w)$.  This happens, for example, when vertices added to $X$ to eliminate overly-dense radius-$r$ balls happen to increase the distance between $v$ and $w$ even though no overly-dense radius-$r$ ball contains $v$ and $w$.

To resolve this problem, we introduce a distance function $d^*$ that mixes distances measured in $H\boxtimes P$ with distance increases intentionally caused by ``obstacles'' in $X$.  This contracts the shortest path metric on $(H\boxtimes P)-X$ just enough so that, for each part $C$ in (a refinement of) the simplified KPR decomposition, $\diam_{d^*}(C)\in O(\Delta)$. The trick is to do this in such a way that $d^*$ does not contract the metric too much, so the local density of the metric space $\mathcal{M}^*:=(V(G)\setminus X,d^*)$ is $O(\sqrt{tn}/\log n)$, just like the metric space $\mathcal{M}$ that it contracts.  At this point, we can follow the steps used in Rao's proof to show that the metric space $\mathcal{M}^*$ has a $(k,O(\sqrt{\log n}))$-volume-preserving Euclidean contraction (the equivalent of \cref{rao_planar_graphs}) and then apply a generalization of \cref{feige_bandwidth_vs_density_graphs} to establish that $G-X$ has bandwidth $O(\sqrt{tn}\log^2 n)$.

\section{Local Sparsification}
\label{local_sparsification_section}

This section proves a generalization of our local sparsification lemma, \cref{planar_sparsifier}. The proof uses the following standard vertex-weighted separator lemma. Similar results with similar proofs appear in  \citet{robertson.seymour:graph}, but we provide a proof for the sake of completeness.

\begin{lem}\label{weighted_separator}
    Let $H$ be a graph; let $\mathcal{T}:=(B_x:x\in V(T))$ be a tree-decomposition of $H$; and let $\xi:V(H)\to\R_{\geq 0}$ be a function.  For any subgraph $X$ of $H$, let  $\xi(X):=\sum_{v\in V(X)} \xi(v)$.    Then, for any $c\in\NN_1$, there exists $S\subseteq V(T)$ such that $|S|\le c-1$ and for each component $X$ of $H-(\bigcup_{x\in S} B_x)$, $\xi(X)\le \xi(H)/c$.
\end{lem}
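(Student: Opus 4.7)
The plan is to reduce the statement to a standard weighted-tree centroid lemma. First, I would fix any root $r$ of $T$. For each vertex $v\in V(H)$, the set $T_v:=\{x\in V(T):v\in B_x\}$ is a non-empty subtree of $T$ by the definition of a tree decomposition, so I can define the \emph{owner} $\mathrm{own}(v)\in T_v$ to be the node of $T_v$ closest to $r$, and set
\[
    \xi^*(x) := \sum_{v\,:\,\mathrm{own}(v)=x} \xi(v) \qquad \text{for } x\in V(T).
\]
These weights are non-negative with $\sum_{x\in V(T)}\xi^*(x)=\xi(H)$, pushing all of the $\xi$-mass onto the nodes of $T$.

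I would then prove the following weighted-tree lemma by induction on $c$: \emph{for any tree $T$ with non-negative vertex weights $\xi^*$ of total weight $W$, there exists $S\subseteq V(T)$ with $|S|\le c-1$ such that every component of $T-S$ has $\xi^*$-weight at most $W/c$.} The base case $c=1$ is vacuous with $S=\emptyset$. For $c\ge 2$, I would root $T$ arbitrarily and let $\sigma(x)$ denote the $\xi^*$-weight of the subtree rooted at $x$. If $W=0$ take $S=\emptyset$; otherwise select a \emph{deepest} node $y$ with $\sigma(y)>W/c$ (which exists since $\sigma(r)=W>W/c$). By depth-maximality, every child of $y$ has subtree weight at most $W/c$, and the ``upper'' subtree $T':=T\setminus T_y$ has total weight $W'=W-\sigma(y)<W(c-1)/c$. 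Applying the inductive hypothesis to $T'$ with parameter $c-1$ yields $S'\subseteq V(T')$ with $|S'|\le c-2$ such that every component of $T'-S'$ has weight at most $W'/(c-1)<W/c$. Setting $S:=\{y\}\cup S'$, each component of $T-S$ is either a subtree rooted at a child of $y$ (weight $\le W/c$) or a component of $T'-S'$ (weight $<W/c$), and $|S|\le c-1$.

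Finally, I would apply this tree lemma to $(T,\xi^*)$ with $W=\xi(H)$ to obtain $S\subseteq V(T)$ with $|S|\le c-1$. The key observation is that for any $v\in V(H)\setminus\bigcup_{x\in S}B_x$, the subtree $T_v$ must avoid $S$ (else $v\in B_s$ for some $s\in S$), so $T_v$ lies entirely in a single component of $T-S$, and in particular so does $\mathrm{own}(v)$. Moreover, whenever $u,v$ are adjacent in $H-\bigcup_{x\in S}B_x$, some edge-covering bag $B_z$ contains both, forcing $z\in T_u\cap T_v\subseteq V(T)\setminus S$, so $T_u$ and $T_v$ sit in the same component of $T-S$. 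Propagating along paths, for each component $X$ of $H-\bigcup_{x\in S}B_x$ there is a single component $C_X$ of $T-S$ containing $\mathrm{own}(v)$ for every $v\in X$, which gives
\[
    \xi(X) = \sum_{v\in X}\xi(v) \le \sum_{x\in C_X}\xi^*(x) \le \frac{W}{c} = \frac{\xi(H)}{c}.
\]

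The main technical point will be the weighted-tree induction: choosing $y$ to be a deepest heavy subtree root simultaneously guarantees that each child's subtree is already light and that the complementary tree $T'$ has total weight strictly less than $(c-1)W/c$, which is precisely the slack needed for the inductive hypothesis at parameter $c-1$ to produce pieces of weight at most $W/c$. The ``owner'' reduction is what makes this tree lemma directly applicable to $H$, since it certifies that every connected component of $H-\bigcup_{x\in S}B_x$ is supported on a single component of $T-S$.
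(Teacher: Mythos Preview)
Your proof is correct and follows essentially the same approach as the paper: induction on $c$, rooting the tree, selecting a deepest heavy node $y$ (the paper uses the threshold $\xi(H_x)\ge\xi(H)/c$, you use $\sigma(x)>W/c$), peeling off the subtree at $y$, and recursing with parameter $c-1$ on the remainder. The only difference is packaging: the paper works directly with the subgraphs $H_x:=H[\bigcup_{z\in V(T_x)}B_z]$ and their weights, whereas you first push all the weight onto $T$ via the ``owner'' map, prove a clean weighted-tree centroid lemma, and then translate back. This extra layer of abstraction is harmless and arguably cleaner, but the underlying inductive argument is identical.
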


\begin{proof}
  The proof is by induction $c$. The base case $c=1$ is trivial, since $S:=\emptyset$ satisfies the requirements of the lemma.  Now assume $c\ge 2$.  Root $T$ at some arbitrary vertex $r$ and for each $x\in V(T)$, let $T_x$ denote the subtree of $T$ induced by $x$ and all its descendants.  Let $H_x:=H[\bigcup_{y\in V(T_x)} B_y]$.  Say that a node $x$ of $T$ is \defin{heavy} if $\xi(H_x) \ge \xi(H)/c$. Since $c\ge 1$, $r$ is heavy, so $T$ contains at least one heavy vertex. Let $y$ be a heavy vertex of $T$ with the property that no child of $y$ is also heavy.  Then $H':=H-V(H_y)$ has weight $\xi(H') = \xi(H)-\xi(H_y) \le (1-1/c)\cdot\xi(H)$.  On the other hand, every component $C$ of $H-V(H')-B_y$ has weight $\xi(C) \le \xi(H)/c$.  Apply induction on the graph $H'$ with tree-decomposition $\mathcal{T}':=(B_x\cap V(H'):x\in V(T))$ and $c':=c-1$ to obtain a set $S'$ of size at most $c-2$ such that each component $X$ of $H'-(\bigcup_{x\in S'} B_x)$, has weight $\xi(X) \le \tfrac{1}{c-1}\cdot(1-\tfrac{1}{c})\cdot\xi(H) = \tfrac{1}{c}\cdot \xi(H)$.  The set $S:=S'\cup\{y\}$ satisfies the requirements of the lemma.
\end{proof}

A \defin{layering} $\{L_s:s\in\Z\}$ of a graph $G$ is a collection of pairwise disjoint sets indexed by the integers whose union is $V(G)$ and such that, for each edge $vw$ of $G$, $v\in L_i$ and $w\in L_j$ implies that $|i-j|\le 1$.
For example, if $r$ is a vertex in a connected graph $G$, and $L_i:=\{v\in V(G):d_G(v,r)=i\}$ for each integer $i\geq \NN$, then $\{L_i:i\in\NN\}$ is a layering of $G$, called a \defin{BFS layering}. For $t\in\NN_1$, a layering $\{L_s:s\in\Z\}$ of a graph $G$ is \defin{$t$-Baker} if, for every $s\in\Z$ and $r\in\NN_1$, $G[L_s\cup\cdots\cup L_{s+r-1}]$ has treewidth at most $rt-1$. A graph $G$ is \defin{$t$-Baker} if $G$ has a $t$-Baker layering.  Clearly, if every connected component of $G$ is $t$-Baker, then $G$ is $t$-Baker.

Every planar graph is $3$-Baker, and for a connected planar graph $G$, any BFS layering of $G$ is $3$-Baker~\cite{RS-III}. (This property is used in Baker's seminal work on approximation algorithms for planar graphs~\cite{baker:approximation}.) Thus, \cref{planar_sparsifier} is an immediate consequence of the following more general result:


\begin{lem}
\label{sparsifier_baker}
For any $D \in\R_{\geq 1}$ and $t,n\in\NN_1$, any $n$-vertex $t$-Baker graph $G$ contains a set $X$ of at most $(18tn\log n)/D$ vertices such that $G-X$ has local density at most $D$.
\end{lem}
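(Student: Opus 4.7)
The plan is to fix a $t$-Baker layering $\{L_s : s \in \mathbb{Z}\}$ of $G$ and build $X$ via a multi-scale sparsification, using \cref{weighted_separator} to extract small separators from subgraphs induced by windows of consecutive layers; the $t$-Baker property ensures each such window-induced subgraph has low treewidth. For each level $j = 1, 2, \ldots, \lceil \log_2 n \rceil + 1$, I partition the layers into disjoint windows of $2^{j+1}$ consecutive layers, using two such partitions shifted against each other by $2^j$. A short check shows that any interval of at most $2^j + 1$ consecutive layers is contained in some window of one of the two families at level $j$ (the two shifts exactly cover all possible start positions modulo $2^{j+1}$ when the interval has length at most $2^j + 1$).

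For each window $W$ (at level $j$ and shift $a$), let $H_W := G[W]$; by the $t$-Baker property $\tw(H_W) \le 2^{j+1} t - 1$, so a corresponding tree-decomposition $\mathcal{T}_W$ has bags of size at most $2^{j+1} t$. I apply \cref{weighted_separator} to $(H_W, \mathcal{T}_W)$ with uniform weights $\xi \equiv 1$ and $c := \lceil |V(H_W)|/(2^{j-2} D)\rceil$ to obtain a set $S_W$, the union of at most $c - 1$ bags, of size at most $(c-1) \cdot 2^{j+1} t \le 8t\,|V(H_W)|/D$, such that every component of $H_W - S_W$ has at most $|V(H_W)|/c \le 2^{j-2} D$ vertices. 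Setting $X := \bigcup_W S_W$, the windows of a single family at a single level partition $V(G)$, so the separator budget from that family at that level is at most $8tn/D$. Summing over two shifts and the $O(\log n)$ levels gives $|X| \le O(tn\log n / D)$; careful accounting, together with trivial handling of small $n$ (e.g., taking $X = V(G)$), yields the constant $18$.

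For local density, given $v \in V(G - X)$ and $r \ge 1$, I pick the smallest $j$ with $r \le 2^{j-1}$, so $r > 2^{j-2}$. The ball $B_{G-X}(v, r)$ spans at most $2r + 1 \le 2^j + 1$ consecutive layers around the layer of $v$, hence by construction lies inside some window $W$ at level $j$. Since $B_{G-X}(v, r)$ is connected in $G - X$ (any two vertices in the ball are joined by a length-$\le r$ path in $G - X$, and that path stays in the ball) and $S_W \subseteq X$, every such path lies in $H_W - S_W$, so the ball lies in a single component of $H_W - S_W$, whose size is at most $2^{j-2} D < rD$. Therefore $\ld(G - X) \le D$.

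\textbf{Main obstacle.} The crux is the window scheme. It must simultaneously (i) confine every radius-$r$ ball in $G - X$ to a single window at an appropriate level, so that one separator per window suffices to bound the ball's size, and (ii) keep the total separator size within the $O(tn\log n / D)$ budget. The dyadic-scale, two-shift construction achieves both: the $t$-Baker property makes the treewidth of each window-induced subgraph linear in the window length (so each per-window separator costs only $O(t\,|V(H_W)|/D)$ vertices), while each vertex of $G$ is charged by only $O(\log n)$ windows across all levels and shifts.
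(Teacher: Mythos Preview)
Your proposal is correct and follows essentially the same approach as the paper's proof: a multi-scale application of \cref{weighted_separator} to subgraphs induced by windows of consecutive layers, using the $t$-Baker property to bound the treewidth (hence the separator cost) of each window, and then locating each ball $B_{G-X}(v,r)$ inside a single window at the appropriate scale. The only cosmetic difference is in how the windows are arranged: the paper uses, at scale $i$, triples $G^+_{i,j}=G_{i,j-1}\cup G_{i,j}\cup G_{i,j+1}$ of consecutive width-$2^i$ blocks (so each vertex lies in three super-windows per level), whereas you use two shifted families of disjoint width-$2^{j+1}$ windows (so each vertex lies in two windows per level). Both schemes guarantee that the relevant interval of layers fits in some window, and both give the same $O(tn\log n/D)$ total separator size up to constants.
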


\begin{proof}
If $D \geq n$ then the claim holds with $X=\emptyset$. Now assume that $D <n$. Let $\mathcal{L}:=\{L_s:s\in \Z\}$ be a $t$-Baker layering of $G$.  Without loss of generality, assume that $L_i=\emptyset$ for each $i <0$ and each $i\ge n$.  For each $i\in\NN$ and $j\in\Z$, let $G_{i,j}:=G[\bigcup_{s=j2^i}^{(j+1)2^i-1} L_s]$,  and let $G^+_{i,j}=G[V(G_{i,j-1})\cup V(G_{i,j})\cup V(G_{i,j+1})]$.  Observe that, for every $i$, the graphs in $\{ G_{i,j}\}_{j\in\N}$ are pairwise vertex disjoint.  By the definition of $G^+_{i,j}$, this implies that the graphs in $\{G^+_{i,j}\}_{j\in\N}$ have a total of at most $3n$ vertices.

  For each $i\in\{0,\ldots,\lfloor \log n\rfloor-1\}$ 
  and each $j$, $G^+_{i,j}$ has treewidth at most $3t\cdot 2^i-1$, since $\mathcal{L}$ is $t$-Baker.  By \cref{weighted_separator}, with weight function $\xi(v)\coloneqq 1$ for every $v\in V(G^+_{i,j})$ and $c \coloneqq \lceil |V(G^+_{i,j})|/(D 2^{i-1})\rceil$, there exists a set $X_{i,j}\subseteq V(G^+_{i,j})$ such that 
  \[
    |X_{i,j}|\le 3t\cdot 2^i\cdot(c-1) =
    3t\cdot 2^i\cdot\left(\left\lceil\frac{ |V(G^+_{i,j})|}{D 2^{i-1}}\right\rceil-1\right) \le
    \frac{3t\cdot 2^i\cdot|V(G^+_{i,j})|}{ D 2^{i-1}}
    = \frac{6t|V(G^+_{i,j})|}{D},
  \]
  and each component of $G^+_{i,j}-X_{i,j}$ has at most $|V(G^+_{i,j})|/c \le \delta2^{i-1}$ vertices.  Let
  \[X:=\bigcup_{i=0}^{\lfloor\log n\rfloor-1}\bigcup_{j}X_{i,j}.\]
  Then
  \[
    |X| \le
    \sum_{i=0}^{\lfloor\log n\rfloor-1}\sum_{j} |X_{i,j}|
    \le
    \sum_{i=0}^{\lfloor\log n\rfloor-1}\sum_{j} \frac{6t|V(G^+_{i,j})|}{D}
    \le
    \sum_{i=0}^{\lfloor\log n\rfloor-1}\frac{18tn}{D}
    \le\frac{18tn\log n}{D}
    \enspace .
  \]
  Now, consider some ball $B_{G-X}(v,r)$ in $G-X$, let $i=\lceil\log r\rceil$, and let $j$ be the unique integer such that $v\in V(G_{i,j})$.  Then $B_{G-X}(v,r)$ is contained in a single component of $G^+_{i,j}-X_{i,j}$, and this component has at most $D 2^{i-1}=D 2^{\lceil\log r\rceil-1}\le D  r$ vertices.
\end{proof}

\section{Volume-Preserving Contractions}
\label{contractions}

This section introduces volume-preserving Euclidean contractions, and explains their connection to bandwidth. This material builds on the work of \citet{feige:approximating} and \citet{rao:small}, and is essential for the proof of \cref{rtw-flex} in \cref{htimesp_section}.


A \defin{distance function} over a set $S$ is any function $d:S^2\to\R\cup\{\infty\}$ that satisfies $d(x,x)=0$ for all $x\in S$; $d(x,y)=d(y,x)>0$ for all distinct $x,y\in S$; and $d(x,z) \le d(x,y)+d(y,z)$ for all distinct $x,y,z\in S$.  For any $x\in S$, and any non-empty $Z\subseteq S$, $\mathdefin{d(x,Z)}:=\min(\{d(x,y):y\in Z\})$.  A \defin{metric space} $\mathcal{M}:=(S,d)$ consists of a set $S$ and a distance function $d$ over (some superset of) $S$.
$\mathcal{M}$ is \defin{finite} if $S$ is finite and $\mathcal{M}$ is \defin{non-empty} if $S$ is non-empty.  For $x\in S$ and $r\geq 0$, the \defin{$r$-ball} centered at $x$ is $\mathdefin{B_{\mathcal{M}}(x,r)}:=\{y\in S:d(x,y)\le r\}$.  The \defin{diameter} of a non-empty finite metric space $(S,d)$ is $\mathdefin{\diam_d(S)}:=\max\{d(x,y):x,y\in S\}$, and the
\defin{minimum-distance} of $(S,d)$ is $\mathdefin{\mindist_d(S)}:=\min(\{d(x,y): \{x,y\}\in\binom{S}{2}\}\cup\{\infty\})$.
Note that $\mindist_d(S)=\infty$ if $|S|=1$. 

For any graph $G$, $d_G$ is a distance function over $V(G)$, so $\mathdefin{\mathcal{M}_G}:=(V(G),d_G)$ is a metric space. Any metric space that can be defined this way is referred to as a \defin{graph metric}. For any $S\subseteq V(G)$, the \defin{diameter} and \defin{minimum-distance} of $S$ in $G$ are defined as $\mathdefin{\diam_G(S)}:=\diam_{d_G}(S)$ and $\mathdefin{\mindist_G(S)}:=\mindist_{d_G}(S)$, respectively.

Since we work with strong products it is worth noting that, for any two graphs $A$ and $B$,
\[
  d_{A\boxtimes B}((x_1,x_2),(y_1,y_2))=\max\{d_A(x_1,y_1),d_B(x_2,y_2)\} \enspace .
\]

Define the \defin{local density} of a non-empty finite metric space  $\mathcal{M}=(S,d)$ to be
\[
  \mathdefin{\ld(\mathcal{M})}:=
  \max\{\,(|B_{\mathcal{M}}(x,r)|-1)/r\,:\,x\in S,\, r>0\}. 
\]
(This maximum exists because $S$ is finite, so there are only $\binom{|S|}{2}$ values of $r$ that need to be considered.)\ 
Thus, if $\mathcal{M}$ has local density at most $D$,  then $|B_{\mathcal{M}}(x,r)|\le Dr+1$ for each $x\in S$ and $r\ge 0$.
This definition is consistent with the definition of local density of graphs:  A graph $G$ has local density at most $D$ if and only if the metric space $\mathcal{M}_G$ has local density at most $D$.  Note that, if $(S,d)$ has local density at most $D$ then $(S,d)$ has $\diam_d(S)\ge (|S|-1)/D$ and $\mindist(S)\ge 1/D$.

A \defin{contraction} of a metric space $\mathcal{M}=(S,d)$ into a metric space $\mathcal{M'}=(S',d')$ is a function $\phi:S\to S'$ that satisfies $d'(\phi(x),\phi(y))\le d(x,y)$, for each $x,y\in S$. The \defin{distortion} of $\phi$ is $\max\{d(x,y)/d'(\phi(x),\phi(y)):\{x,y\}\in \binom{S}{2}\}$.\footnote{If there exists $\{x,y\}\in \binom{S}{2}$ with $d(x,y)>0$ and $d'(\phi(x),\phi(y))=0$, then the distortion of $\phi$ is infinite. This is not the case for any of the contractions considered in this work.}  When $S\subseteq S'$ and $\phi$ is the identity function, we say that $\mathcal{M'}$ is a contraction of $\mathcal{M}$. In particular, saying that $(S,d')$ is a contraction of $(S,d)$ is equivalent to saying that $d'(x,y)\le d(x,y)$ for all $x,y\in S$.

For two points $x,y\in\R^L$, let $\mathdefin{d_2(x,y)}$ denote the Euclidean distance between $x$ and $y$.  A contraction of $(S,d)$ into $(\R^L, d_2)$ for some $L\ge 1$ is called a \defin{Euclidean contraction}.  For $K\subseteq S$ we abuse notation slightly with the shorthand $\mathdefin{\phi(K)}:=\{\phi(x):x\in K\}$.   We make use of two easy observations that follow quickly from these definitions:

\begin{obs}\label{contraction_increases_density}
  Let $\mathcal{M}:=(S,d)$ and $\mathcal{M}':=(S',d')$ be non-empty finite metric spaces.  If $\mathcal{M'}$ has local density $D$ and $\mathcal{M}$ has an injective contraction into $\mathcal{M}'$ then  $\mathcal{M}$ has local density at most $D$.
\end{obs}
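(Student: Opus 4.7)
The plan is to show that any ball in $\mathcal{M}$ maps, under the injective contraction $\phi$, into a ball of the same radius in $\mathcal{M}'$, and then read off the density bound by injectivity.

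First, I would fix arbitrary $x\in S$ and $r>0$, and consider the image $\phi(B_{\mathcal{M}}(x,r))$. For any $y\in B_{\mathcal{M}}(x,r)$ the defining inequality $d(x,y)\le r$ combined with the contraction condition $d'(\phi(x),\phi(y))\le d(x,y)$ yields $d'(\phi(x),\phi(y))\le r$, so $\phi(y)\in B_{\mathcal{M}'}(\phi(x),r)$. Hence $\phi(B_{\mathcal{M}}(x,r))\subseteq B_{\mathcal{M}'}(\phi(x),r)$.

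Next, I would use injectivity of $\phi$ to conclude $|\phi(B_{\mathcal{M}}(x,r))|=|B_{\mathcal{M}}(x,r)|$, and combine this with the hypothesis $\ld(\mathcal{M}')\le D$, which gives $|B_{\mathcal{M}'}(\phi(x),r)|\le Dr+1$. Chaining these inequalities yields $|B_{\mathcal{M}}(x,r)|\le Dr+1$, and hence $(|B_{\mathcal{M}}(x,r)|-1)/r\le D$. Taking the maximum over all $x\in S$ and $r>0$ gives $\ld(\mathcal{M})\le D$.

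There is no real obstacle here: the argument uses only the definitions of contraction, injective map, and local density, so the whole proof amounts to chasing one set inclusion and one cardinality comparison. The only minor point worth noting explicitly is that injectivity is essential, since without it the image of a ball could collapse and the bound on $|\phi(B_{\mathcal{M}}(x,r))|$ would not transfer back to a bound on $|B_{\mathcal{M}}(x,r)|$.
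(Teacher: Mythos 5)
Your proof is correct and follows essentially the same reasoning as the paper: show that the contraction maps $B_{\mathcal{M}}(x,r)$ into $B_{\mathcal{M}'}(\phi(x),r)$, use injectivity to transfer the cardinality bound, and conclude. Nothing to add.
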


\begin{proof}
  Let $\phi:S\to S'$ be an injective contraction of $\mathcal{M}$ into $\mathcal{M}'$.  For every $x\in S$, every $r > 0$, and every $y\in B_\mathcal{M}(x,r)$, we have $d'(\phi(x),\phi(y))\le d(x,y)\le r$, since $\phi$ is a contraction.  Therefore, $B_{\mathcal{M'}}(\phi(x),r)\supseteq \phi(B_{\mathcal{M}}(x,r))$.  Since $\phi$ is injective, $|B_{\mathcal{M'}}(\phi(x),r)|\ge |\phi(B_{\mathcal{M}}(x,r))|=|B_{\mathcal{M}}(x,r)|$.
  Since $\mathcal{M}'$ has local density at most $D$, $rD+1\ge |B_{\mathcal{M'}}(\phi(x),r)|\ge  |B_{\mathcal{M}}(x,r))|$.
\end{proof}

\begin{obs}\label{supergraph_contraction}
  For any graph $I$ and any subgraph $G$ of $I$, $(V(G),d_I)$ is a contraction of $(V(G),d_G)$.
\end{obs}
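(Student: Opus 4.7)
The plan is essentially a one-step unpacking of definitions. Recall that for $(V(G), d_I)$ to be a contraction of $(V(G), d_G)$, by the comment immediately preceding the observation, it suffices to verify that $d_I(x,y) \le d_G(x,y)$ for every $x,y \in V(G)$ (the identity map on $V(G)$ is the contraction witness, and $V(G) \subseteq V(I)$ so $d_I$ is defined on $V(G)^2$).

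To establish this inequality, I would argue as follows. Fix $x,y \in V(G)$. If $d_G(x,y) = \infty$ the claim is immediate, so assume $d_G(x,y)$ is finite and let $P$ be a shortest path from $x$ to $y$ in $G$, of length $d_G(x,y)$. Since $G$ is a subgraph of $I$, every edge of $P$ is also an edge of $I$, so $P$ is a walk from $x$ to $y$ in $I$ of the same length. Therefore $d_I(x,y) \le |E(P)| = d_G(x,y)$, as required. There is no real obstacle here; the observation is a direct consequence of the fact that adding edges to a graph can only decrease shortest-path distances.
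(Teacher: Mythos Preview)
Your proof is correct and follows essentially the same approach as the paper: both verify that $d_I(x,y)\le d_G(x,y)$ by observing that any path in $G$ is also a path in $I$. Your version is slightly more explicit (handling the $d_G(x,y)=\infty$ case separately and fixing a specific shortest path), but the argument is the same.
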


\begin{proof}
  From the definitions, it follows that $d_I$, restricted to $V(G)$ is a distance function over $V(G)$, so $(V(G),d_I)$ is a metric space.  Since $G$ is a subgraph of $I$, every path in $G$ is also a path in $I$ so, $d_I(x,y)\le d_G(x,y)$ for each $x,y\in V(G)$.  
\end{proof}



For a set $K$ of $k\le L+1$ linearly-independent points in $\R^L$, the \defin{Euclidean volume} of $K$, denoted by $\mathdefin{\evol(K)}$, is the $(k-1)$-dimensional volume of the simplex whose vertices are the points in $K$.  For example, if $k=3$, then $\evol(K)$ is the area of the triangle whose vertices are $K$ and that is contained in a plane that contains $K$.

Define the \defin{ideal volume} of a finite metric space $(K,d)$ to be 
\[\mathdefin{\ivol_d(K)}:=\max\{\evol(\phi(K)):\text{$\phi$ is a Euclidean contraction of $(K,d)$}\} \enspace .\]
A Euclidean contraction $\phi:S\to\R^{\ell}$ of a finite metric space $(S,d)$ is \defin{$(k,\eta)$-volume-preserving} if $\evol(\phi(K))\ge \ivol_d(K)/\eta^{k-1}$ for each $k$-element subset $K$ of $S$.  This definition is a generalization of distortion: $\phi$ is $(2,\eta)$-volume-preserving if and only if $\phi$ has distortion at most $\eta$.

\citet{feige:approximating} introduces the following definition and theorem as a bridge between ideal volume and Euclidean volume. The \defin{tree volume} of a finite metric space $(K,d)$ is defined as $\mathdefin{\tvol_d(K)}:=\prod_{xy\in E(T)} d(x,y)$ where $T$ is a minimum spanning tree of the weighted complete graph with vertex set $K$ where the weight of each edge $xy$ is equal to $d(x,y)$.  The following lemma makes tree volume a useful intermediate measure when trying to establish that a contraction is volume-preserving.

\begin{lem}[{\citet[Theorem~3]{feige:approximating}}]\label{tvol_vs_ivol}
For any finite metric space  $(S,d)$ with $|S|=k$, 
  \[
    \ivol_{d}(S) \le \frac{\tvol_d(S)}{(k-1)!} \le 2^{(k-2)/2}\ivol_d(S) \enspace .
  \]
\end{lem}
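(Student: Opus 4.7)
The two inequalities have very different flavors, and I would prove them separately.

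For the first inequality, $\ivol_d(S)\le \tvol_d(S)/(k-1)!$, fix any Euclidean contraction $\phi:S\to\mathbb{R}^L$, and let $T$ be a minimum spanning tree of the weighted complete graph on $S$, rooted at an arbitrary vertex $r$. Writing the vertices of $S$ as $v_0=r,v_1,\dots,v_{k-1}$ in a DFS preorder, the simplex volume is
\[
  \evol(\phi(S)) = \frac{1}{(k-1)!}\,|\det M|,
\]
where $M$ is the $(k-1)\times(k-1)$ matrix with columns $\phi(v_i)-\phi(r)$. I would rewrite each column telescopically along the $T$-path from $r$ to $v_i$ as a sum of tree-edge vectors $\phi(w)-\phi(p(w))$. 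The DFS ordering makes the corresponding change-of-basis matrix upper unitriangular, so $|\det M|$ equals the absolute value of the determinant of the matrix $N$ whose columns are the tree-edge vectors $\phi(v)-\phi(p(v))$ for $v\ne r$. Hadamard's inequality applied to $N$ gives $|\det N|\le \prod_{v\ne r}\|\phi(v)-\phi(p(v))\|$, and since $\phi$ is a contraction this is at most $\prod_{v\ne r} d(v,p(v)) = \tvol_d(S)$ (using that $T$ is an MST). Taking the supremum over $\phi$ yields the first inequality.

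The second inequality, $\tvol_d(S)/(k-1)!\le 2^{(k-2)/2}\ivol_d(S)$, is the harder direction: one must exhibit a Euclidean contraction whose simplex volume is only a factor $2^{(k-2)/2}$ below the product of MST edge lengths divided by $(k-1)!$. My plan is to construct such a $\phi$ inductively, losing a single factor of $\sqrt{2}$ per vertex added. In the inductive step, I would remove a carefully chosen edge $e^\ast=uv$ of the MST (the heaviest edge, say, so that by the MST cycle property $w^\ast\le d(x,y)$ for every cross pair $x\in S_1,y\in S_2$), apply the hypothesis to the two subtrees $T_1,T_2$ to obtain contractions $\phi_1,\phi_2$ into orthogonal coordinate blocks, and glue them into $\phi:S\to\mathbb{R}^{k-1}$ by placing $\phi(y)=\phi_1(u)+(w^\ast/\sqrt 2)\,e_\star + (\phi_2(y)-\phi_2(v))$ for $y\in S_2$, where $e_\star$ is a fresh orthogonal direction. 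By construction the parallelepiped along the tree edges is the product of the two inductive parallelepipeds times $w^\ast/\sqrt 2$, yielding the claimed $\sqrt 2$-factor loss in volume per step.

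The main obstacle is verifying that the glued $\phi$ is a contraction, i.e.\ that $\|\phi_1(x)-\phi_1(u)\|^2+(w^\ast)^2/2+\|\phi_2(y)-\phi_2(v)\|^2\le d(x,y)^2$ for every cross pair $x\in S_1,y\in S_2$. The triangle inequality only gives an upper bound on $d(x,y)$, whereas we need a lower bound; I expect the heart of the argument to lie here, exploiting the MST structure (the heaviest-edge choice of $e^\ast$ and the contraction slack already carried by $\phi_1,\phi_2$) to absorb the extra $(w^\ast)^2/2$ term. If the heaviest-edge choice does not suffice, a centroid-like splitting that balances $|S_1|$ and $|S_2|$ may be required, so that the total of $k-2$ recursive gluings — and hence the total $2^{(k-2)/2}$ loss — is achieved. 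Once both inequalities are established, the chain $\ivol_d(S)\le\tvol_d(S)/(k-1)!\le 2^{(k-2)/2}\ivol_d(S)$ follows immediately.
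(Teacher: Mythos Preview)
The paper does not prove this lemma; it is quoted as Theorem~3 of Feige and used as a black box, so there is no in-paper proof to compare your attempt against.

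Your argument for the first inequality is correct and is essentially the standard one. One notational quibble: since $\phi$ maps into $\mathbb{R}^L$ with $L$ possibly much larger than $k-1$, the ``$|\det M|$'' you write should be read as $\sqrt{\det(M^\top M)}$; the unitriangular change of basis and Hadamard's inequality go through unchanged in this Gram form.

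For the second inequality you correctly identify the obstacle, but it is fatal to your scheme, not a detail to be filled in. Take $S=\{1,2,3,4\}$ with all six pairwise distances equal to $1$. Any spanning path is an MST; split on the middle edge $23$, so $S_1=\{1,2\}$, $S_2=\{3,4\}$, $u=2$, $v=3$, $w^\ast=1$. The inductive embeddings $\phi_1,\phi_2$ of the two-point sets are isometries (there is no slack to exploit), and your gluing gives, for the cross pair $x=1$, $y=4$,
\[
\|\phi(1)-\phi(4)\|^2 \;=\; \|\phi_1(1)-\phi_1(2)\|^2 + \tfrac{(w^\ast)^2}{2} + \|\phi_2(4)-\phi_2(3)\|^2 \;=\; 1+\tfrac12+1 \;=\; \tfrac52 \;>\; 1 \;=\; d(1,4)^2,
\]
so $\phi$ is not a contraction. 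The MST cycle property only gives $d(x,y)\ge w^\ast$, which says nothing about $d(x,u)$ or $d(v,y)$, and since the recursive embeddings can be tight there is no slack to absorb the extra terms. Choosing a different edge to split on, or balancing $|S_1|$ and $|S_2|$, does nothing for this symmetric example. Feige's actual construction for this direction is not a two-sided recursion: he adds the points one at a time in Prim order and, for each new point $v_p$, uses both a fresh orthogonal coordinate of size $h_p/\sqrt{2}$ \emph{and} a carefully chosen foot in the affine span of the previously placed points, so that the contraction inequality can be checked against each earlier point individually. Your divide-and-conquer gluing does not reproduce this and, as the example shows, cannot be salvaged by adjusting which edge is removed.
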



The following lemma, whose proof appears in \cref{reciprocal_sum_section}, generalizes \citet[Theorem~10]{feige:approximating} from graph metrics to general metric spaces and establishes a critical connection between local density and tree volume.

\begin{restatable}[{Generalization of \cite[Theorem~10]{feige:approximating}}]{lem}{reciprocalsum}\label{reciprocal_sum}
For any $k,n\in\NN_1$, for every $n$-element metric space $\mathcal{M}:=(S,d)$ with local density at most $D$,
  \[
    \sum_{K\in \binom{S}{k}}\frac{1}{\tvol_{d}(K)} < n(DH_n/2)^{k-1} \enspace ,
  \]
  where $\mathdefin{H_n}:=\sum_{i=1}^n 1/i\le 1+\ln n$ is the \defin{$n$-th harmonic number}.
\end{restatable}

\Cref{volume_density_bandwidth}, which appears below and whose proof appears in
\cref{volume_density_bandwidth_section}, is a generalization of \cref{feige_bandwidth_vs_density_graphs} from graph metrics to arbitrary metrics.
 %
 %
 %
 %
First, we need a definition of bandwidth for metric spaces.  Let $(S,d)$ be a non-empty finite metric space and let $x_1,\ldots,x_n$ be a permutation of $S$.  Then $\mathdefin{\bw_{(S,d)}(x_1,\ldots,x_n)}:=\max\{j-i:d(x_i,x_j)\le 1,\, 1\le i <j\le n\}$ and $\mathdefin{\bw(S,d)}$ is the minimum of $\bw_{(S,d)}(x_1,\ldots,x_n)$ taken over all $n!$ permutations $x_1,\ldots,x_n$ of $S$.  Note that this coincides with the definition of the bandwidth of a graph: For any connected graph $G$, $\bw(\mathcal{M}_G)=\bw(G)$.  First, observe that injective contractions can only increase bandwidth:



\begin{obs}\label{contraction_increases_bandwidth}
  For every finite metric space $\mathcal{M}:=(S,d)$ and every (injective) contraction $\mathcal{M}':=(S,d')$ of $\mathcal{M}$, $\bw(\mathcal{M}) \le \bw(\mathcal{M}')$.
\end{obs}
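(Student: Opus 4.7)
The plan is to observe that this is essentially a direct consequence of two definitions: (i) the definition of a contraction of the form $\mathcal{M}'=(S,d')$ of $\mathcal{M}=(S,d)$, which, as noted just after the definition, is equivalent to the pointwise inequality $d'(x,y)\le d(x,y)$ for all $x,y\in S$; and (ii) the definition of bandwidth as the minimum, over orderings of $S$, of the largest index gap among pairs at distance at most $1$. Since shrinking the metric can only increase the set of pairs at distance at most $1$, the bandwidth can only go up.

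More concretely, I would argue as follows. Let $n:=|S|$, and let $x_1,\dots,x_n$ be a permutation of $S$ that achieves $\bw_{\mathcal{M}'}(x_1,\dots,x_n)=\bw(\mathcal{M}')$; such a permutation exists because $S$ is finite. Fix an arbitrary pair $1\le i<j\le n$ with $d(x_i,x_j)\le 1$. Since $\mathcal{M}'$ is a contraction of $\mathcal{M}$, we have $d'(x_i,x_j)\le d(x_i,x_j)\le 1$, so the pair $(x_i,x_j)$ also contributes to the maximum defining $\bw_{\mathcal{M}'}(x_1,\dots,x_n)$. Hence
\[
  j-i\le \bw_{\mathcal{M}'}(x_1,\dots,x_n)=\bw(\mathcal{M}').
\]
Taking the maximum over all such pairs $i<j$ yields $\bw_{\mathcal{M}}(x_1,\dots,x_n)\le \bw(\mathcal{M}')$, and taking the minimum over permutations gives $\bw(\mathcal{M})\le \bw(\mathcal{M}')$.

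There is no substantive obstacle here: the only thing to be careful about is the direction of the inequality. Because the order of the inequality $d'\le d$ is the opposite of the direction one might naively associate with ``contraction reduces structure'', it is tempting to reverse the inequality; the argument above makes explicit that, because we are bounding indices of pairs whose distance is small, having smaller distances in $\mathcal{M}'$ admits \emph{more} pairs into the maximum, so $\mathcal{M}'$ has bandwidth at least that of $\mathcal{M}$. The parenthetical ``injective'' in the statement is automatically satisfied here because the two metric spaces share the same underlying set $S$ and the contraction is the identity map, so injectivity requires no further verification.
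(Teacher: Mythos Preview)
Your proof is correct and is essentially identical to the paper's own argument: both pick an optimal ordering for $\mathcal{M}'$, use $d'\le d$ to conclude that every pair with $d(x_i,x_j)\le 1$ also satisfies $d'(x_i,x_j)\le 1$, and deduce the bandwidth inequality. Your added remark about the direction of the inequality and the automatic injectivity is accurate but not needed for the proof.
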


\begin{proof}
  Let $x_1,\ldots,x_n$ be an ordering of the elements of $S$ such that $b:=\bw(\mathcal{M}')=\linebreak \bw_{\mathcal{M}'}(x_1,\ldots,x_n)$. Consider any pair of elements $x_ix_j$ with $d(x_i,x_j) \le 1$. Since $\mathcal{M}'$ is a contraction of $\mathcal{M}$, $d'(x_i,x_j)\le 1$.  Since $\bw_{\mathcal{M}'}(x_1,\ldots,x_n)\le b$, $|j-i|\le b$.  Thus $\bw(\mathcal{M})\le \bw_{\mathcal{M}}(x_1,\ldots,x_n)\le b$.
\end{proof}

\begin{restatable}[{Generalization of \cref{feige_bandwidth_vs_density_graphs}}]{thm}{volumedensitybandwidth}
\label{volume_density_bandwidth}
  Let $D>0$ and $\Delta\ge 2$, and let $(S,d)$ be an  $n$-element metric space with local density at most $D$ and diameter at most $\Delta$.  If $(S,d)$ has a $(k,\eta)$-volume-preserving Euclidean contraction $\phi:S\to\R^L$ then
  \[
    \bw(S,d) \in O((nk\log\Delta)^{1/k}\,Dk\eta\log^{3/2} n) \enspace .
  \]
\end{restatable}

%
%
%

\section{\boldmath Subgraphs of \texorpdfstring{$H\boxtimes P$}{H boxtimes P}: Proof of \texorpdfstring{\cref{rtw-flex}}{Theorem?}}
\label{htimesp_section}

This section proves a bandwidth-flexibility result for  graphs of given row treewidth (\cref{rtw-flex}) that generalizes \cref{planar-flex} for planar graphs, and is an essential ingredient in the proof of the analogous result for $K_h$-minor-free graphs (\cref{minor-flex}). 

Most of the results in this section are written as claims that are not self-contained, since they refer $G$, $H$, $P$, $X$, $d^*$, and other objects defined throughout this section. From this point on, $\mathdefin{G}$ is an $n$-vertex subgraph of $H\boxtimes P$ where $\mathdefin{H}$ is a \defin{$t$-tree} (an edge-maximal graph of treewidth $t$) and $\mathdefin{P}$ is a path.

We now outline the structure of our proof, where $\delta$ is the given multiplier. (We use the notation $\mathcal{M}\rightbroom\mathcal{M'}$ to denote that $\mathcal{M}'$ is a contraction of $\mathcal{M}$.)\ 

\begin{enumerate}
  \item Use a variant of \cref{sparsifier_baker} to find a set $X\subseteq V(H\boxtimes P)$ of size $O((tn\log n)/\delta)$ such that the metric space $\mathcal{M}:=(V(G-X),d_{(H\boxtimes P)-X})$ has local density at most $\delta$. 
  Since $G-X$ is a subgraph of $(H\boxtimes P)-X$, \cref{supergraph_contraction} implies that $\mathcal{M}$ is a contraction of the metric space $\mathcal{M}_{G-X}:=(V(G-X),d_{G-X})$, so $\mathcal{M}_{G-X}\rightbroom\mathcal{M}$.

  \item Design a distance function $d^*:V((H\boxtimes P)-X)^2\to\R$ so that the metric space $\mathcal{M}^*:=(V(H\boxtimes P)\setminus X,d^*)$ 
  is a contraction of $\mathcal{M}$ with the property that the induced metric space $(V(G-X),d^*)$ has local density at most $\delta$.

  Graphically, $\mathcal{M}_{G-X}\rightbroom\mathcal{M}\rightbroom\mathcal{M}^*$.

  \item Prove that $\mathcal{M}^*$ has a $(k,O(\sqrt{\log n}))$-volume-preserving Euclidean contraction, for $k=\ceil{\log n}$.  The preceding two steps are done in such a way that this part of the proof is able to closely follow the proof of \cref{rao_planar_graphs} by \citet{rao:small}.

  \item  By \cref{volume_density_bandwidth},  $\bw(\mathcal{M}^*)\in O(\delta\log^3 n)=O(\sqrt{tn}\log^2 n)$.  Since $\mathcal{M}^*$ is a contraction of $\mathcal{M}_{G-X}$, \cref{contraction_increases_bandwidth} implies that $\bw(G-X)=\bw(\mathcal{M}_{G-X}) \le \bw(\mathcal{M}^*)\in O(\sqrt{tn}\log^2 n)$.
\end{enumerate}

The delicate part of the proof is the design of the distance function $d^*$ that contracts $d_{(H\boxtimes P)-X}$ but still ensures that the local density of $(V(G-X),d^*)$ is at most $\delta$. If $d^*$ contracts too much, then $(V(G-X),d^*)$ will not have local density $O(\delta)$. If $d^*$ contracts too little, then it will be difficult to get a $(k,O(\sqrt{\log n}))$-volume-preserving Euclidean embedding of $\mathcal{M}^*$.  To make all of this work, the distance function $d^*$ makes use of the structure of the sparsifying set $X$.

\subsection{A Structured Sparsifier}
\label{x_definition}


In this section, we construct a sparsifying set $X$ like that used in \cref{planar_sparsifier}.  The main difference is that we do not use a BFS layering of $G$ when applying \cref{sparsifier_baker}. Instead, we use the layering of $G$ that comes from $H\boxtimes P$.  Although this is really the only difference, we repeat most of the steps in the proof of \cref{sparsifier_baker} in order to establish notations and precisely define the structure of $X$, which will be useful in the design of the distance function $d^*$.  In particular, later sections rely on the structure of the individual subsets $X_{i,j}$ whose union is $X$.

Let $N:=2^{\ceil{\log n}}$ and let $P:=\mathdefin{y_{-N+1},y_{-N+2},\ldots,y_{2N}}$ be a path.  Without loss of generality we assume all vertices of $G$ are contained in $V(H)\times\{y_1,\ldots,y_N\}$.  For each $i\in\{0,\ldots,\log N\}$ and each $j\in\{-1,0,\ldots,N/2^{i}\}$, let $P_{i,j}:=y_{j2^i+1},\ldots,y_{(j+1)2^i}$ be a subpath of $P$ with $2^i$ vertices. For each $i\in\{0,\ldots,\log N\}$ and each $j\in\{0,\ldots,N/2^{i}-1\}$, let $P^+_{i,j}:=P[V(P_{i,j-1})\cup V(P_{i,j})\cup V(P_{i,j+1})]$ be the concatenation of $P_{i,j-1}$, $P_{i,j}$, and $P_{i,j+1}$. Define $Q_{i,j}\coloneqq H\boxtimes P_{i,j}$ and $Q^+_{i,j}\coloneqq H\boxtimes P^+_{i,j}$.
In words, $Q_{i,0},\ldots,Q_{i,N/2^i-1}$ partitions the part of $H\boxtimes P$ that contains $G$ into vertex-disjoint strips of height $2^i$. Each subgraph $Q^+_{i,j}$ is a strip of height $3\cdot 2^i$ that contains $Q_{i,j}$ in its middle third.

To construct our sparsifying set $X$, we first construct vertex subsets $Y_{i,j}$ of $H$ for each $i\in\{0,1,\ldots,\log N\}$ and $j\in\{0,\ldots,N/2^{i}-1\}$.  Define the weight function $\xi_{i,j}:V(H)\to\N$ where $\xi_{i,j}(x):=|(\{x\}\times V(P^+_{i,j})) \cap V(G)|$.  Observe that $\xi_{i,j}(H):=\sum_{x\in V(H)} \xi_{i,j}(x)=|V(Q^+_{i,j})\cap V(G)|$. Let $D\ge 2$ be a real number. By \cref{weighted_separator} with $c:=\lceil\xi_{i,j}(H)/(2^{i-1}\delta)\rceil$, there exists $Y_{i,j}\subseteq V(H)$ of size at most $(t+1)\xi_{i,j}(H)/(2^{i-1}\delta)$, such that each component $C$ of $H-Y_{i,j}$ has total weight $\xi_{i,j}(C)\le 2^{i-1}\delta$.  For each $i\in\{0,1,\ldots,\log N\}$ and $j\in\{0,\ldots,N/2^{i}-1\}$, let $\mathdefin{X_{i,j}}:=Y_{i,j}\times V(P^+_{i,j})$.  We think of $X_{i,j}$ as a vertical separator that splits the strip $Q^+_{i,j}$ into parts using vertex cuts that run from the top to the bottom of $Q^+_{i,j}$.

\begin{clm}\label{component_sizes}
  For each $i\in\{0,,\ldots,\log N\}$ and $j\in\{0,\ldots,N/2^{i}-1\}$, each component of $Q^+_{i,j}-X_{i,j}$ has at most $2^{i-1}\delta$ vertices.\
\end{clm}

\begin{proof}
  The number of vertices of $G$ in a component $C$ of $Q^+_{i,j}-X_{i,j}$ is equal to the total weight $\zeta_{i,j}(C_H)$ of the corresponding component $C_H$ of $H-Y_{i,j}$.  Therefore, each component of $Q^+_{i,j}-X_{i,j}$ contains at most $2^{i-1}\delta$ vertices of $G$.
\end{proof}


Let 
\[
    \mathdefin{X} :=
     \bigcup_{i=0}^{\text{log} N} {
     \bigcup_{j=0}^{N/2^i-1}} X_{i,j} \enspace .
\]

\begin{clm}\label{x_size}
  $|X|\le 18(t+1)n(1+\log N)/\delta$.
\end{clm}

\begin{proof} 
  Observe that $\sum_{j=0}^{N/2^i-1} \xi_{i,j}(H)\le\sum_{j=0}^{N/2^i-1} 3|V(Q_{i,j})| \le 3n$, since, each vertex $v$ of $G$ can only appear in $Q^+_{i,j-1}, Q^+_{i,j}$, and $Q^+_{i,j+1}$ where $j$ is the unique index such that $v\in V(Q_{i,j})$.    By definition, $|X_{i,j}|=3\cdot 2^i \cdot |Y_{i,j}| \le 6(t+1)\xi_{i,j}(H)/\delta$.  Therefore, $\sum_{j=0}^{N/2^i-1} |X_{i,j}|\le 18(t+1)n/\delta$. Summing over $i\in\{0,\ldots,\log N\}$ completes the proof.
\end{proof}

\subsection{\boldmath The Distance Function \texorpdfstring{$d^*$}{d*}}
\label{d_star_definition}

In order to construct a volume-preserving Euclidean contraction $\phi$ for a distance function $d$ we must ensure (at least) that $d_2(\phi(v),\phi(w))$ is large whenever $d(v,w)$ is large.  This is relatively easy to do for the distance function $d_{H\boxtimes P}$ using (simplifications of) the techniques used by \citet{rao:small} for planar graphs. This is more difficult for $d_{(H\boxtimes P)-X}$ because distances are larger, which only makes the problem harder.  Some of these distances are necessarily large; the obstacles in $X$ are needed to ensure that $(V(G),d_{(H\boxtimes P)-X})$ has local density at most $D$.  The purpose of a single set $X_{i,j}$ is to increase distances between some pairs of vertices in $Q^+_{i,j}$ so that they are at least $2^i$.  However, the obstacles in $X$ sometimes interact, by chance, to make distances excessively large. \cref{big_distance} shows that, even when $H=P$, obstacles in $X_{i,j+1}$ and in $X_{i,j-1}$ can interact in such a way that $d_{(H\boxtimes P)-X}(v,w)$ can become $r2^i$ for arbitrarily large $r$.  This large distance is not needed to ensure the local density bound and it makes it difficult to construct a volume-preserving Euclidean contraction of $(V(G),d_{(H\boxtimes P)-X})$.  The purpose of the intermediate distance function $d^*$ is to reduce these unnecessarily large distances so that $d^*(v,w)$ is defined only by the ``worst'' obstacle in $X$ that separates $v$ and $w$.


\begin{figure}
  \centering
  \includegraphics{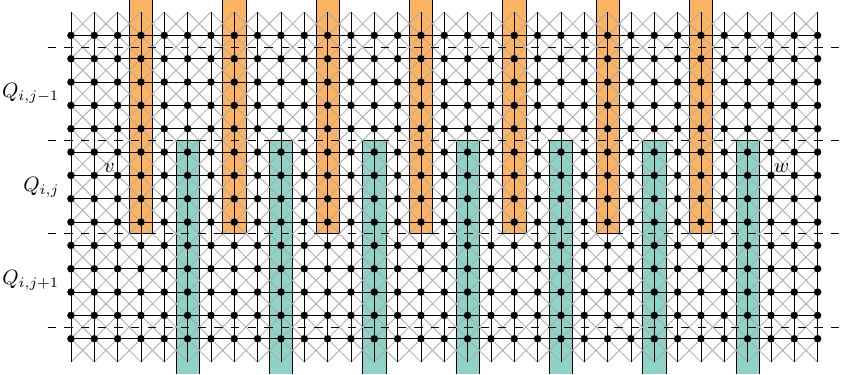}
  \caption{Obstacles not in $X_{i,j}$ can interact to create excessively large distances between vertices in $Q_{i,j}$.}
  \label{big_distance}
\end{figure}

For any subgraph $B$ of a graph $A$, we use the shorthand $\overline{B}:=V(A)\setminus V()$. (When we use this notation, the graph $A$ will be clear from context.)\ 
For any vertex $u$ of $H\boxtimes P$, let $u_P$ denote the second coordinate of $u$ (the projection of $u$ onto $P$).  Let $u$ and $v$ be two vertices of $(H\boxtimes P)-X$.  If $u$ and $v$ are both vertices of $Q^+_{i,j}$ but are in different components of $Q^+_{i,j}-X_{i,j}$, then define
\[
  \mathdefin{d_{i,j}(u,v)}:=\min \left\{d_P(u_P,x) + d_P(x,v_P):x\in \overline{P^+_{i,j}}\right\} \enspace .
\]
Otherwise (if one of $u$ or $v$ is not in $Q^+_{i,j}$ or $u$ and $v$ are in the same component of $Q^+_{i,j}-X_{i,j}$), define $\mathdefin{d_{i,j}(u,v)}:=0$.  When $d_{i,j}(u,v)>0$, it is helpful to think of $d_{i,j}(u,v)$ as the length of the shortest walk in $P$ that begins at $u_P$, leaves $P^+_{i,j}$ and returns to $v_P$.  Now define our distance function
\[
  \mathdefin{d^*(u,v)}:=\max\left(\{d_{H\boxtimes P}(u,v)\}\cup \left\{d_{i,j}(u,v):(i,j)\in\{0,\ldots,\log N\}\times\{1,\ldots,N/2^i-1\}\right\}\right) \enspace .
\]
Intuitively, $d^*(u,v)$ captures the fact that any path from $u$ to $v$ in $(H\boxtimes P)-X$ must navigate around each obstacle $X_{i,j}$ that separates $u$ and $v$ in the graph $Q^+_{i,j}$.  At the very least, this requires a path from $u$ to some vertex $x$ outside of $Q^+_{i,j}$ followed by a path from $x$ to $v$.  The length of this path is at least the length of the shortest walk in $P$ that begins at $u_P$, contains $x_P$ and ends at $w_P$.

\begin{clm}\label{d_star_distance}
  The function $d^*:V((H\boxtimes P)-X)\to\N\cup\{\infty\}$ is a distance function for $V((H\boxtimes P)-X)$.
\end{clm}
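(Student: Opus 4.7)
The plan is to verify the three defining axioms of a distance function in turn: $d^*(u,u)=0$, non-negativity and symmetry, and the triangle inequality. The first two are immediate from the definition. Each summand in the max defining $d^*$ is non-negative and vanishes on the diagonal: this is clear for $d_{H\boxtimes P}$, and for $d_{i,j}$ it holds because $u$ is trivially in the same component of $Q^+_{i,j}-X_{i,j}$ as itself, so the definition forces $d_{i,j}(u,u)=0$. Symmetry is likewise immediate: the min expression $\min\{d_P(u_P,x)+d_P(x,v_P):x\in\overline{P}^+_{i,j}\}$ is symmetric in $u,v$, and the condition triggering the nontrivial case of $d_{i,j}$ (both vertices in $Q^+_{i,j}$ but in different components of $Q^+_{i,j}-X_{i,j}$) is symmetric as well.

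The substantive task is the triangle inequality $d^*(u,w)\le d^*(u,v)+d^*(v,w)$. Since $d^*$ is a max, it is enough to bound each summand separately. The graph-metric summand $d_{H\boxtimes P}(u,w)$ is automatic from the triangle inequality in $H\boxtimes P$ together with $d^*(\cdot,\cdot)\ge d_{H\boxtimes P}(\cdot,\cdot)$. So the crux is to establish, for every $(i,j)$, that
\[ d_{i,j}(u,w)\le d^*(u,v)+d^*(v,w). \]
We may assume $d_{i,j}(u,w)>0$, so $u,w\in V(Q^+_{i,j})$ sit in distinct components of $Q^+_{i,j}-X_{i,j}$. The argument then splits on the location of $v$.

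If $v\notin V(Q^+_{i,j})$, then $v_P\in\overline{P}^+_{i,j}$ is a legal exit in the definition of $d_{i,j}(u,w)$, which yields
\[ d_{i,j}(u,w)\le d_P(u_P,v_P)+d_P(v_P,w_P)\le d_{H\boxtimes P}(u,v)+d_{H\boxtimes P}(v,w)\le d^*(u,v)+d^*(v,w), \]
where the middle inequality uses that the projection $H\boxtimes P\to P$ is a contraction. If instead $v\in V(Q^+_{i,j})$, then since $u$ and $w$ are in distinct components of $Q^+_{i,j}-X_{i,j}$, the vertex $v$ must lie in a component distinct from that of at least one of $u,w$; without loss of generality say $v$ and $u$ are separated, so $d_{i,j}(u,v)>0$ and admits a minimizer $x^*\in\overline{P}^+_{i,j}$. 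Using $x^*$ as the exit vertex in the definition of $d_{i,j}(u,w)$ and applying the triangle inequality in $P$,
\[ d_{i,j}(u,w)\le d_P(u_P,x^*)+d_P(x^*,w_P)\le d_P(u_P,x^*)+d_P(x^*,v_P)+d_P(v_P,w_P)=d_{i,j}(u,v)+d_P(v_P,w_P), \]
which is at most $d^*(u,v)+d^*(v,w)$.

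The only real subtlety is that $d_{i,j}$ does not itself satisfy the triangle inequality in isolation: its piecewise definition causes it to jump discontinuously from $0$ to a positive value when $v$ crosses a component boundary, so one has to bridge through $d_{H\boxtimes P}$ in the case where $v$ escapes $Q^+_{i,j}$ entirely. The two cases above together cover every way $v$ can be positioned relative to $Q^+_{i,j}$, and the argument is otherwise just the triangle inequality in the path $P$.
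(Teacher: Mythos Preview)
Your proof is correct and follows essentially the same approach as the paper's: verify the easy axioms, then for the triangle inequality split on whether $v$ lies in $V(Q^+_{i,j})$, using $v_P$ itself as the exit vertex when $v\notin V(Q^+_{i,j})$ and otherwise using that $v$ must be separated in $Q^+_{i,j}-X_{i,j}$ from at least one of $u,w$. The only cosmetic difference is that the paper phrases Case~2 in terms of walks in $P$ (start at $u_P$, leave $P^+_{i,j}$, return to $w_P$) and takes $v$ separated from $w$, whereas you pick the explicit minimizer $x^*$ and take $v$ separated from $u$; these are the same argument up to symmetry and notation.
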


\begin{proof}
 It is straightforward to verify that $d^*(u,u)=0$ for all $u\in V((H\boxtimes P)\setminus X)$ and that $d^*(u,v)=d^*(v,u)\ge 0$ for all $u,v\in V((H\boxtimes P)-X)$.  It only remains to verify that $d^*$ satisfies the triangle inequality.  We must show that, for distinct $u,v,w\in V((H\boxtimes P)-X)$, $d^*(u,w)\le d^*(u,v)+d^*(v,w)$.

  If $d^*(u,w)=d_{H\boxtimes P}(u,w)$ then $d^*(u,v)+d^*(v,w)\ge d_{H\boxtimes P}(u,v)+d_{H\boxtimes P}(v,w)\ge d_{H\boxtimes P}(u,w)$ and we are also done.  Otherwise, $d^*(u,w)=d_{i,j}(u,w)>0$ for some $i,j$.  Then $u$ and $w$ are vertices of $Q^+_{i,j}$ that are in different components of $Q^+_{i,j}-X_{i,j}$.  There are two cases to consider, depending on the location of $v$:
  \begin{compactenum}
    \item If $v\not\in V(Q^+_{i,j})$ then $d^*(u,v)+d^*(v,w)\ge d_{H\boxtimes P}(u,v)+d_{H\boxtimes P}(v,w) \ge d_P(u_P,v_P)+d_P(v_P,w_P)\ge d_{i,j}(u,w)=d^*(u,w)$.

    \item If $v\in V(Q^+_{i,j})$ then, since $u$ and $w$ are in different components $C_u$ and $C_w$ of $Q^+_{i,j}-X_{i,j}$, at least one of $C_u$ or $C_w$ does not contain $v$.  Without loss of generality, suppose $C_w$ does not contain $v$.  Then $d^*(u,v)+d^*(v,w)\ge d_{H\boxtimes P}(u,v)+d_{i,j}(v,w) \ge d_{P}(u_P,v_P)+d_{i,j}(v,w)$.  Now, $d_{P}(u_P,v_P)$ is the length of a path in $P$ from $u_P$ to $v_P$ and $d_{i,j}(v,w)$ is the length of a (shortest) walk in $P$ that begins at $v_P$, leaves $P^+_{i,j}$ and then returns to $w_P$. Thus,   $d_{P}(u_P,v_P)+d_{i,j}(v,w)$ is the length of a walk in $P$ that begins at $u_P$, leaves $P^+_{i,j}$ and then returns to $w_P$. On the other hand, $d_{i,j}(u,w)$ is the length of a shortest walk in $P$ that begins at $u_P$, leaves $P^+_{i,j}$ and returns to $w_P$, so $d_{i,j}(u,w)\le d_P(u_P,v_P)+d_{i,j}(v,w)$.  Therefore, $d^*(u,v)+d^*(v,w)\ge d_{P}(u_P,v_P)+d_{i,j}(v,w)\ge d_{i,j}(u,w)=d^*(u,w)$. \qedhere
  \end{compactenum}
\end{proof}

\begin{clm}\label{delta_density}
  The metric space $\mathcal{M}^*:=(V(G-X),d^*)$ has local density at most $D$.
\end{clm}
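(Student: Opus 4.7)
The plan is to fix an arbitrary $v\in V(G-X)$ and $r>0$, and bound the number of $u\in V(G-X)$ with $d^*(u,v)\le r$ by localizing every such $u$ inside a single component of $Q^+_{i,j}-X_{i,j}$ for a dyadic scale $i$ chosen according to $r$; \cref{component_sizes} then finishes the estimate. Two degenerate ranges are disposed of at once. If $0<r<1$, then $d^*(u,v)\ge d_{H\boxtimes P}(u,v)\ge 1$ for any two distinct vertices, so the ball reduces to $\{v\}$ and $1\le rD+1$. If $r>N$, then $|B_{\mathcal{M}^*}(v,r)\cap V(G-X)|\le n\le N\le r\le rD+1$ trivially.

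For the main range $1\le r\le N$, I would set $i:=\lceil\log_2 r\rceil\in\{0,\dots,\log N\}$, so that $2^{i-1}<r\le 2^i$, and let $j$ be the unique index with $v\in V(Q_{i,j})$. The heart of the proof is the containment claim: every $u\in B_{\mathcal{M}^*}(v,r)\cap V(G-X)$ lies in the same component of $Q^+_{i,j}-X_{i,j}$ as $v$. Given this, \cref{component_sizes} yields at most $2^{i-1}D$ vertices of $G$ in that component, and hence $|B_{\mathcal{M}^*}(v,r)\cap V(G-X)|\le 2^{i-1}D<rD\le rD+1$, as required.

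To prove the containment claim, I would exploit that $v_P$ lies in the central third $P_{i,j}$ of the fat strip $P^+_{i,j}$. Writing $v_P=y_s$ with $j2^i+1\le s\le(j+1)2^i$, and noting that $\overline{P}^+_{i,j}$ consists of the $y_t$ with $t\le(j-1)2^i$ or $t\ge(j+2)2^i+1$, a direct count gives $d_P(v_P,y)\ge 2^i+1$ for every $y\in\overline{P}^+_{i,j}$. Now suppose $d^*(u,v)\le r$ for some $u\in V(G-X)$. If $u\notin V(Q^+_{i,j})$, then $u_P\notin V(P^+_{i,j})$ and so $d^*(u,v)\ge d_{H\boxtimes P}(u,v)\ge d_P(u_P,v_P)\ge 2^i+1>r$, a contradiction. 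If instead $u\in V(Q^+_{i,j})$ but $u$ lies in a different component of $Q^+_{i,j}-X_{i,j}$ than $v$, the definition of $d_{i,j}$ yields $d^*(u,v)\ge d_{i,j}(u,v)\ge\min_{x\in\overline{P}^+_{i,j}}d_P(x,v_P)\ge 2^i+1>r$, again a contradiction. Hence $u$ must be in the same component as $v$.

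The main obstacle, and the reason $d^*$ was defined with the $d_{i,j}$ penalty terms in the first place, is exactly this geometric step. The fattening from $P_{i,j}$ (width $2^i$) up to $P^+_{i,j}$ (width $3\cdot 2^i$) produces a buffer of width $\ge 2^i$ between $v_P$ and $\overline{P}^+_{i,j}$, which is precisely what makes $d_{i,j}(u,v)$ exceed $r\le 2^i$ whenever $X_{i,j}$ separates $u$ from $v$ in $Q^+_{i,j}$. Without this buffer the penalty would be too weak to keep balls from bleeding across components (violating local density), while replacing $d_{i,j}$ by the unrestricted distance in $(H\boxtimes P)-X$ would be too strong (as illustrated in \cref{big_distance}) and would later obstruct the construction of a volume-preserving Euclidean contraction. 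Everything beyond this key estimate is routine dyadic bookkeeping.
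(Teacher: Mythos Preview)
Your proposal is correct and follows essentially the same argument as the paper: choose the dyadic scale $i=\lceil\log r\rceil$, use $d^*(u,v)\ge d_{H\boxtimes P}(u,v)$ to force $u\in V(Q^+_{i,j})$, use $d^*(u,v)\ge d_{i,j}(u,v)\ge 2^i+1$ to force $u$ into the same component of $Q^+_{i,j}-X_{i,j}$ as $v$, and finish with \cref{component_sizes}. Your handling of the degenerate ranges $r<1$ and $r>N$ is a bit more explicit than the paper's (which simply dismisses $r\ge n/D$), and you lower-bound $d_{i,j}(u,v)$ by dropping the $d_P(u_P,x)$ summand rather than keeping both terms, but these are cosmetic differences.
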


\begin{proof}
  We must show that, for any $v\in V(G)$ and any $r>0$, $|B_{\mathcal{M}^*}(v,r)|\le Dr+1$.  If $r\ge n/D$ then this is trivial, so assume that $r< n/D$.  Consider some vertex $w\in B_{\mathcal{M}^*}(v,r)$.  Let $i:=\lceil\log r\rceil$ and let $j$ be such that $v$ is a vertex of $Q_{i,j}$.  Since $w\in B_{\mathcal{M}^*}(v,r)$, $d_{H\boxtimes P}(v,w)\le r\le 2^i$.  Therefore $d_P(v_P,w_P)\le d_{H\boxtimes P}(v,w)\le 2^i$.  Therefore $w$ is contained in $Q^+_{i,j}$.  Since $d^*(v,w)\le r$, $d_{i,j}(v,w)\le r$.  This implies that $v$ and $w$ are in the same component of $Q^+_{i,j}-X_{i,j}$ since, otherwise, $d_{i,j}(v,w)\ge d_P(u_P,\overline{P^+_{i,j}}) + d_P(\overline{P^+_{i,j}},w_P)\ge 2^i+1$.
  Therefore, $B_{\mathcal{M}^*}(v,r)$ is contained in the component $C$ of $Q^+_{i,j}-X_{i,j}$ that contains $v$.  By \cref{component_sizes}, $|V(C)|\le 2^{i-1}D< rD$.  Therefore, $|B_{\mathcal{M}^*}(v,r)|\le |V(C)|< rD$.
\end{proof}

\begin{clm}
  The metric space $(V(H\boxtimes P)\setminus X,d^*)$ is a contraction of $\mathcal{M}_{(H\boxtimes P)-X}=(V((H\boxtimes P)-X),d_{(H\boxtimes P)-X})$.
\end{clm}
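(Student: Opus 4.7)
The plan is to unfold the definition of $d^*$ as a maximum and show that each of the quantities in that maximum is at most $d_{(H\boxtimes P)-X}(u,v)$ for every $u,v \in V((H\boxtimes P)-X)$. The bound $d_{H\boxtimes P}(u,v) \le d_{(H\boxtimes P)-X}(u,v)$ is immediate from \cref{supergraph_contraction}, since $(H\boxtimes P)-X$ is a subgraph of $H\boxtimes P$. So the only real work is to show that $d_{i,j}(u,v) \le d_{(H\boxtimes P)-X}(u,v)$ for every index pair $(i,j)$.

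When $d_{i,j}(u,v)=0$ this is trivial. Otherwise, by definition of $d_{i,j}$, both $u$ and $v$ are vertices of $Q^+_{i,j}$ but lie in distinct components of $Q^+_{i,j}-X_{i,j}$. Let $\pi$ be a shortest $u$-to-$v$ path in $(H\boxtimes P)-X$, and write $\ell := d_{(H\boxtimes P)-X}(u,v)$ for its length. Since $X_{i,j}\subseteq X$, the path $\pi$ avoids $X_{i,j}$; since $u$ and $v$ lie in different components of $Q^+_{i,j}-X_{i,j}$, the path $\pi$ cannot remain entirely inside $Q^+_{i,j}$, so it must visit some vertex whose projection to $P$ lies in $\overline{P}^+_{i,j}$. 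Pick any such vertex and call its $P$-coordinate $x$.

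The finishing step is a standard projection argument: every edge of $H\boxtimes P$ projects to either an edge or a loop of $P$, so the projection of $\pi$ onto $P$ is a walk from $u_P$ to $v_P$ passing through $x$, of length at most $\ell$. Hence $d_P(u_P,x)+d_P(x,v_P)\le \ell$, and minimizing the left-hand side over $x\in\overline{P}^+_{i,j}$ gives $d_{i,j}(u,v)\le \ell$. Combining this with the first bound yields $d^*(u,v)\le d_{(H\boxtimes P)-X}(u,v)$, which is precisely the contraction property. I do not expect any real obstacle here; the only point requiring a moment of care is the observation that a path in $H\boxtimes P$ projects to a walk of no greater length in $P$, which follows at once from the definition of the strong product.
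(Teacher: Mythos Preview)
Your proposal is correct and follows essentially the same approach as the paper: both argue that when $d_{i,j}(u,v)>0$, any $u$--$v$ path in $(H\boxtimes P)-X$ must leave $Q^+_{i,j}$ via some vertex with $P$-coordinate in $\overline{P}^+_{i,j}$, and then bound $d_{i,j}(u,v)$ by the path length using the projection to $P$. Your explicit projection-to-a-walk argument is slightly more detailed than the paper's terse chain $d_{H\boxtimes P}(u,x)+d_{H\boxtimes P}(x,v)\ge d_P(u_P,x_P)+d_P(x_P,v_P)$, but the content is the same; the only minor omission is the (trivial) case where no $u$--$v$ path exists in $(H\boxtimes P)-X$, in which case the right-hand side is $\infty$.
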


\begin{proof}
  Let $u$ and $v$ be distinct vertices of $(H\boxtimes P)-X$.  If $d^*(u,v)=d_{H\boxtimes P}(u,v)$ then, $d^*(u,v)=d_{H\boxtimes P}(u,v)\le d_{(H\boxtimes P)-X}(u,v)$.  If $d^*(u,v)=d_{i,j}(u,v)$ for some $i$ and $j$ then any path from $u$ to $v$ in $(H\boxtimes P)-X$ must contain some vertex $x$ not in $Q^+_{i,j}$ since $u$ and $v$ are in different components of $Q^+_{i,j}-X$.  The shortest such path has length at least $d_{H\boxtimes P}(u,x)+d_{H\boxtimes P}(x,v) \ge d_P(u_P,x_P)+d_P(x_P,v_P) \ge d_{i,j}(u,v)=d^*(u,v)$.
\end{proof}

The preceding claims are summarized as follows:

\begin{cor}\label{d_star_summary}
  The metric space $(V((H\boxtimes P)-X), d^*)$ is a  contraction of $(V((H\boxtimes P)-X), d_{(H\boxtimes P)-X})$ and the metric space $(V(G-X),d^*)$ has local density at most $D$.
\end{cor}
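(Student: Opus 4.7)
The plan is to observe that \cref{d_star_summary} is not a new assertion but simply a consolidation of the two immediately preceding results. Specifically, the contraction statement is exactly the claim that was proved just before the corollary, namely that $(V((H\boxtimes P)-X),d^*)$ is a contraction of $\mathcal{M}_{(H\boxtimes P)-X} = (V((H\boxtimes P)-X), d_{(H\boxtimes P)-X})$, and the local density statement is exactly \cref{delta_density}. So the proof is a one-line combination: invoke the two previous claims.

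The only subtlety worth flagging is that \cref{delta_density} is stated for the metric space $(V(G-X),d^*)$, which is an induced sub-metric of $(V((H\boxtimes P)-X),d^*)$; since local density is a property of this induced metric on $V(G-X)$ (where $d^*$ is inherited unchanged), no reformulation is needed. Likewise, the contraction claim was phrased in terms of distances between arbitrary vertices of $(H\boxtimes P)-X$, which immediately gives the contraction between the named metric spaces.

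There is no real obstacle here: the actual content is carried by \cref{d_star_distance} (proving $d^*$ is a distance function), the preceding claim (proving $d^*(u,v) \le d_{(H\boxtimes P)-X}(u,v)$ for every pair), and \cref{delta_density} (bounding the balls $B_{\mathcal{M}^*}(v,r)$ via \cref{component_sizes} after observing that any $w$ with $d^*(v,w)\le r$ must lie in the same component of $Q^+_{i,j}-X_{i,j}$ as $v$ for the appropriate $i,j$). All of that is already established, so the proof of \cref{d_star_summary} will just cite the two claims and conclude.
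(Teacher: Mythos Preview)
Your proposal is correct and matches the paper's approach exactly: the paper introduces \cref{d_star_summary} with the sentence ``The preceding claims are summarized in the following corollary,'' and provides no separate proof, since the two assertions are precisely \cref{delta_density} and the immediately preceding (unnamed) contraction claim.
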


\subsection{Volume-Preserving Contraction of \texorpdfstring{$\mathcal{M}^*$}{M*}}


This subsection proves the following result:


\begin{clm}\label{dstar_contraction}
For every integer $k\in\{1,\ldots,n\}$, the metric space $\mathcal{M}^*:=(V(G-X),d^*)$ has a \linebreak $(k,O(\sqrt{\log n}))$-volume-preserving Euclidean contraction.
\end{clm}

\paragraph{\boldmath Decomposing $H\boxtimes P$.}

Let $\Delta\ge 4$ be a power of $2$. We now show how to randomly decompose $H\boxtimes P$ into subgraphs $\{(H\boxtimes P)^\Delta_{a,b}:(a,b)\in\Z^2\}$. The only randomness in this decomposition comes from choosing two independent uniformly random integers $r_H$ and $r_P$ in $\{0,\ldots,\Delta-1\}$. See \cref{chop_fig} for an example.

\begin{figure}
  \centering
  \includegraphics[page=2]{product}
  \caption{The result of decomposing the graph $H\boxtimes P$ in \cref{strong_product_fig} with $\Delta=4$, $r_H=2$, and $r_P=3$.}
  \label{chop_fig}
\end{figure}

Let $\{L_s:s\in\Z\}$ be a BFS layering of $H$.  For each integer $a$, let $H^\Delta_a:=H[\bigcup_{s=r_H+a\Delta}^{r_H+(a+1)\Delta-1} L_s]$ so that $\{H^\Delta_a:a\in \Z\}$ is a pairwise vertex-disjoint collection of induced subgraphs that covers $V(H)$ and each $H^\Delta_a$ is a subgraph of $H$ induced by $\Delta$ consecutive BFS layers. For each integer $b$, let $P^\Delta_b:=P[\{y_{r_P+b\Delta},\ldots,y_{r_P+(b+1)\Delta-1}]$ so that $\{P^\Delta_b:b\in\Z\}$ is a collection of vertex disjoint paths, each having $\Delta$ vertices, that cover $P$.   For each $(a,b)\in\Z^2$, let $(H\boxtimes P)^\Delta_{a,b}:= H^\Delta_a \boxtimes P^\Delta_b$.


\begin{clm}\label{component_diameter}
  For each $(a,b)\in\Z^2$, each component $C$ of $(H\boxtimes P)^\Delta_{a,b}$ has $\diam_{H\boxtimes P}(C)\le 2\Delta+1$.
\end{clm}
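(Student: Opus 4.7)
The plan is to reduce the claim to a purely $H$-theoretic distance bound, and then prove that bound using the BFS layering of $H$ together with a ``pivot'' vertex supplied by the strip-connectivity between $u_H$ and $v_H$.

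Since $P^\Delta_b$ is a connected path, every component $C$ of $(H\boxtimes P)^\Delta_{a,b}=H^\Delta_a\boxtimes P^\Delta_b$ decomposes as $C=C_H\boxtimes P^\Delta_b$ for some component $C_H$ of $H^\Delta_a$. For any $u=(u_H,u_P)$ and $v=(v_H,v_P)$ in $V(C)$, the strong-product distance formula gives
\[
d_{H\boxtimes P}(u,v)=\max\{d_H(u_H,v_H),\,d_P(u_P,v_P)\}.
\]
Since $u_P,v_P$ lie in the $\Delta$-vertex subpath $P^\Delta_b$, $d_P(u_P,v_P)\le\Delta-1$. Thus the whole claim will reduce to showing $d_H(u_H,v_H)\le 2\Delta+1$ whenever $u_H,v_H$ lie in the same component of $H^\Delta_a=H[L_s\cup\cdots\cup L_{s+\Delta-1}]$, where $s:=r_H+a\Delta$ and $\{L_i\}$ denotes the BFS layers of $H$ from some root $r$.

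For the $H$-distance bound I would fix any walk $\pi$ inside $H^\Delta_a$ from $u_H$ to $v_H$ and let $w\in V(\pi)$ be a vertex in the lowest-indexed BFS layer $L_m$ visited by $\pi$, so $s\le m\le s+\Delta-1$. Climbing the BFS tree of $H$ from $u_H$ and $v_H$ up to layer $L_m$ produces ancestors $u^\star$ and $v^\star$ at BFS-distances at most $\Delta-1$ each from their respective endpoints, so the triangle inequality gives $d_H(u_H,v_H)\le 2(\Delta-1)+d_H(u^\star,v^\star)$. Summing the two vertical legs with a bounded horizontal detour will yield the claimed $2\Delta+1$.

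The hardest step will be bounding the horizontal detour $d_H(u^\star,v^\star)\le 3$. The plan here is to exploit the fact that $w$ lies on the strip-walk $\pi$ at the pivot layer $L_m$ together with the $t$-tree structure of $H$: one can choose a BFS tree so that $w$ itself serves as an ancestor (or lies within a constant number of edges of an ancestor) of both $u_H$ and $v_H$, forcing $u^\star$ and $v^\star$ to be at $H$-distance at most $3$. Assembling the vertical legs ($\le\Delta-1$ each) with this horizontal detour will then give $d_H(u_H,v_H)\le 2(\Delta-1)+3=2\Delta+1$, completing the proof.
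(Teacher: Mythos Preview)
Your reduction to an $H$-distance bound is correct, and the ``vertical'' part of the argument (climbing at most $\Delta-1$ layers along a BFS path from each endpoint) is fine. The gap is the horizontal step: you assert $d_H(u^\star,v^\star)\le 3$ but give no valid mechanism for it. Your suggestion of ``choosing a BFS tree so that $w$ itself serves as an ancestor of both $u_H$ and $v_H$'' does not work, because the BFS layering (and any BFS tree compatible with it) is fixed once for all of $H$; you cannot re-choose it for each pair $(u_H,v_H)$ or for each walk $\pi$. With a fixed BFS tree, the ancestors $u^\star,v^\star$ at layer $L_m$ can a priori be far apart in $H$, and nothing you have said rules this out.

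What you are missing is the structural property of $t$-trees (more generally, chordal graphs) that makes the horizontal detour short: for a BFS layering of $H$, for every connected subgraph $A$ contained in $H[\bigcup_{i\ge m}L_i]$, the set $N(A)\cap L_{m-1}$ is a \emph{clique}. This is exactly the lemma the paper invokes (citing \cite{KP08,DMW05}). Once you have it, you don't need the pivot vertex $w$ at all: the BFS-ancestor paths from $u_H$ and $v_H$ stay inside the component $A$ of $H^\Delta_a$ until they exit into $L_{s-1}$, where they land in the clique $N(A)\cap L_{s-1}$; hence the two exit vertices are equal or adjacent, and $d_H(u_H,v_H)\le \Delta+\Delta+1=2\Delta+1$. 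Your framework could be repaired the same way (descend one more layer to $L_{m-1}$ and use the clique there), but the clique property is the essential missing ingredient, and it replaces, rather than supplements, the adaptive-BFS-tree idea.
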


\begin{proof}
  Let $v:=(v_1,v_2)$ and $w:=(w_1,w_2)$ be two vertices of $(H\boxtimes P)^\Delta_{a,b}$.  Our task is to show that $d_{H\boxtimes P}(v,w)\le 2\Delta+1$.  Recall that
  $d_{H\boxtimes P}(v,w)=\max\{d_{H}(v_1,w_1),d_{P}(v_2,w_2)\}$.  Since $P^\Delta_b$ is a subpath of $P$ with $\Delta$ vertices, $d_P(v_2,w_2)=d_{P^\Delta_b}(v_2,w_2)\le\Delta-1$, so we need only upper bound $d_{H}(v_1,w_1)$.

   To do this, we employ the following property of BFS layerings of $t$-trees \cite{KP08,DMW05}:  For every integer $s$,
  for each component $B$ of $H[L_{s+1}]$,
    the set $N_B$ of vertices in $L_{s}$ that are adjacent to at least one vertex in $B$ forms a clique in $H$.
  Since $v_1$ and $w_1$ are in the same component $A$ of $H^\Delta_a$, this implies that $B:=A[L_{r_H+a\Delta}]$ is connected.  This implies that the set $N_B$ of vertices in $L_{r_H+a\Delta-1}$ adjacent to vertices in $B$ form a clique.  Then $H^\Delta_a$ contains a path of length at most $\Delta$ from $v_1$ to a vertex $v_1'$ in $N_B$.   Likewise, $H^\Delta_a$ contains a path of length at most $\Delta$ from $w$ to a vertex $w'$ in $N_B$. Since $N_B$ is a clique, $v'=w'$ or $v'$ and $w'$ are adjacent.  In the former case, there is a path in $H$ from $v$ to $w$ of length at most $2\Delta$. In the latter case there is a path from $v$ to $w$ of length at most $2\Delta+1$.
\end{proof}

\begin{clm}\label{good_probability}
  Fix some vertex $v$ of $H\boxtimes P$ independently of $r_H$ and $r_P$ and let $(a,b)$ be such that $v$ is a vertex of $(H\boxtimes P)^\Delta_{a,b}$.  Then, with probability at least $1/4$,  
  \[
  d_{H\boxtimes P}(v, \overline{(H\boxtimes P)^\Delta_{a,b}} ) \ge \Delta/4 \enspace .
  \]
\end{clm}

\begin{proof}
  Let $v:=(v_1,v_2)$.
  Let $\mathcal{E}$ be the event $d_{H\boxtimes P}(v, V(H\boxtimes P)\setminus V((H\boxtimes P)_{a,b}^\Delta)) \ge \Delta/4$, let $\mathcal{E}_H$ be the event $d_{H}(v_1, \overline{H^\Delta_a} ) \ge \Delta/4$ and let $\mathcal{E}_P$ be the event $d_{P}(v_2, \overline{P^\Delta_b} ) \ge \Delta/4$.  Then $\mathcal{E}=\mathcal{E}_H\cap\mathcal{E}_P$.

  Recall that our partition is defined in terms of a BFS layering $\{L_i:i\in\Z\}$ of $H$ and a random offset $r_H\in\{0,\ldots,\Delta-1\}$.
  Let $i$ be such that $v_1\in L_i$.  The complementary event $\overline{\mathcal{E}_H}$ occurs if and only if $(i\bmod\Delta)-r_H\in\{-\Delta/4-1,\ldots,\Delta/4-1\}$. The number of such $r_H$ is $\Delta/2-1$, so $\Pr(\overline{\mathcal{E}_H})=(\Delta/2-1)/\Delta < 1/2$ and $\Pr(\mathcal{E}_H)>1/2$.  Similarly $\overline{\mathcal{E}_P}$ occurs if and only if $v_2=y_j$ and $|(j\bmod\Delta)-r_P|\in\{-\Delta/4-1,\ldots,\Delta/4-1\}$ which also occurs with probability less than $1/2$, so $\Pr(\mathcal{E}_P)> 1/2$.

  The events $\mathcal{E}_H$ and $\mathcal{E}_P$ are independent since the occurrence of $\mathcal{E}_H$ is determined entirely by the choice of $r_H$ and the occurrence of $\mathcal{E}_P$ is determined entirely by the choice of $r_P$.
  Therefore $\Pr(\mathcal{E})=\Pr(\mathcal{E}_H)\cdot\Pr(\mathcal{E}_P) > 1/4$.
\end{proof}

\paragraph{\boldmath The Coordinate Function $\varphi_I$.}

Let $I$ be the union of the vertex-disjoint graphs $(H\boxtimes P)^\Delta_{a,b}$ over all integers $a$ and $b$.
Thus, $I$ is a random subgraph of $H\boxtimes P$ whose value depends only on the random choices $r_H$ and $r_P$.  For each component $C$ of $I$, let $X_C:=\cup\{X_{i,j}:C\subseteq Q^+_{i,j},\, i\in\{0,\ldots,\log N\}, j\in\{0,\ldots,N/2^i-1\}\}$.  In words, $X_C$ contains only the vertical cuts used to construct $X$ that cut $C$ from top to bottom. Let $J$ be the subgraph of $I$ obtained by removing, for each component $C$ of $I$, the vertices in $X_C\cap V(C)$.

\begin{clm}\label{dstar_component_diameter}
  Each component $C'$ of $J$ has $\diam_{d^*}(C')\le 5\Delta$.
\end{clm}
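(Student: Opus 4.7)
The plan is to bound each of the quantities contributing to $d^*(u,v)$ for any two vertices $u,v$ lying in a common component $C'$ of $J$. First I will observe that $C'$ sits inside a unique component $C$ of $I$, which itself lies in some $(H\boxtimes P)^\Delta_{a,b}$, so \cref{component_diameter} immediately gives $d_{H\boxtimes P}(u,v)\le 2\Delta+1$. It then remains to bound $d_{i,j}(u,v)$ for every pair $(i,j)$, since $d^*(u,v)$ is the maximum of these quantities and of $d_{H\boxtimes P}(u,v)$.

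If $d_{i,j}(u,v)>0$, then by definition $u$ and $v$ both lie in $V(Q^+_{i,j})$ and fall into different components of $Q^+_{i,j}-X_{i,j}$. I will split on whether $C\subseteq Q^+_{i,j}$. In the first case, $X_{i,j}\subseteq X_C$, so when forming $J$ the vertices of $X_{i,j}\cap V(C)$ were already removed; since $C'$ is connected and lies in $C-(X_C\cap V(C))\subseteq C-(X_{i,j}\cap V(C))$, its vertices $u$ and $v$ must lie in the same component of $Q^+_{i,j}-X_{i,j}$, contradicting $d_{i,j}(u,v)>0$. Hence this case cannot arise.

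In the second case, I will pick an arbitrary witness $x\in V(C)\setminus V(Q^+_{i,j})$. Its projection $x_P$ then lies outside $V(P^+_{i,j})$, i.e., $x_P\in \overline{P}^+_{i,j}$, so $x_P$ is a legal candidate in the minimum defining $d_{i,j}(u,v)$. Two applications of \cref{component_diameter} give $d_P(u_P,x_P)\le d_{H\boxtimes P}(u,x)\le 2\Delta+1$ and $d_P(x_P,v_P)\le 2\Delta+1$, hence $d_{i,j}(u,v)\le 4\Delta+2$. Combining, $d^*(u,v)\le \max(2\Delta+1,\,4\Delta+2)=4\Delta+2\le 5\Delta$, using $\Delta\ge 4$.

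The only conceptual step is the reduction in Case A: the definition of $X_C$ was tailored to collect exactly those $X_{i,j}$ with $C\subseteq Q^+_{i,j}$, which are precisely the obstacles one needs so that removing $X_C\cap V(C)$ from $C$ already prevents two $J$-connected vertices from ending up in different components of any such $Q^+_{i,j}-X_{i,j}$. Everything else is a short triangle-inequality computation in $P$, enabled by the diameter bound on components of $I$; the only arithmetic to verify is $4\Delta+2\le 5\Delta$, which holds because $\Delta\ge 4$.
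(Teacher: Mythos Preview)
Your proof is correct and essentially identical to the paper's: both bound $d_{H\boxtimes P}(u,v)$ via \cref{component_diameter}, argue that $C\subseteq Q^+_{i,j}$ is impossible (since then $X_{i,j}\subseteq X_C$ would force $u,v$ into different components of $J$), pick $x\in V(C)\setminus V(Q^+_{i,j})$, and use two applications of \cref{component_diameter} to get $d_{i,j}(u,v)\le 4\Delta+2\le 5\Delta$. Your explicit observation that $x_P\in\overline{P}^+_{i,j}$ makes $x_P$ a legal witness in the minimum defining $d_{i,j}$ is a nice clarification the paper leaves implicit.
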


\begin{proof}
  Let $C'$ be a component of $J$, let $C$ be the component of $I$ that contains $C'$, and let $v$ and $w$ be two vertices of $C'$. Our task is to show that $d^*(v,w)\le 5\Delta$.  By \cref{component_diameter}, $d_{H\boxtimes P}(v,w)\le 2\Delta+1< 5\Delta$, so we may assume that $d^*(v,w)\neq d_{H\boxtimes P}(v,w)$.  Therefore $d^*(v,w)=d_{i,j}(v,w)$ for some $i$ and $j$ such that $v$ and $w$ are in different components of $Q^+_{i,j}-X_{i,j}$.  Since $v$ and $w$ are in the same component $C$ of $I$, the component $C$ is not contained in $Q^+_{i,j}$. (Otherwise, $X_{i,j}$ would be in $X_C$ and $v$ and $w$ would be in different components of $J$.) Therefore $C$ contains a vertex $x$ that is not in $Q^+_{i,j}$.  By \cref{component_diameter}, $d_{H\boxtimes P}(x,v)\le 2\Delta+1$ and $d_{H\boxtimes P}(x,w)\le 2\Delta+1$.  Therefore $d^*(v,w)=d_{i,j}(v,w)\le d_P(v_P,x_P)+d_P(x_P,w_P)\le d_{H\boxtimes P}(v,x)+d_{H\boxtimes P}(x,w)\le 4\Delta+2\le 5\Delta$.
\end{proof}

For each component $C'$ of $J$, choose a uniformly random $\alpha_{C'}$ in $[0,1]$, with all choices made independently.
For each component $C$ of $I$, each component $C'$ of $J$ that is contained in $C$, and each $v \in V(C')$, let
\[
  \varphi_{I}(v):=(1+\alpha_{C'})\,d_{H\boxtimes P}(v,\overline{C}) \enspace .
\]


\begin{obs}\label{uniform}
  Fix $I:=I(H,P,\Delta,r_H,r_P)$ and $J:=J(H,P,G)$.  For each $v\in V(J)$,
  $\varphi_I(v)$ is uniformly distributed in the real interval $[d_{H\boxtimes P}(v,\overline{C}),\;2d_{H\boxtimes P}(v,\overline{C})]$.
\end{obs}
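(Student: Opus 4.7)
The plan is to observe that, once $I$ and $J$ are fixed, the statement reduces to a one-line computation using only the definition of $\varphi_I$ and the distribution of $\alpha_{C'}$. First I would let $C'$ be the (now-deterministic) component of $J$ containing $v$, and let $C$ be the (now-deterministic) component of $I$ containing $C'$. The quantity $\delta := d_{H\boxtimes P}(v,\overline{C})$ is then a fixed non-negative real depending only on $v$, $C$, and the graph $H\boxtimes P$, with no remaining randomness in it.

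Next, I would recall from the definition of $\varphi_I$ that
\[
    \varphi_I(v) = (1+\alpha_{C'})\,\delta,
\]
and that $\alpha_{C'}$ is uniform on $[0,1]$ by construction (the independence across different components $C'$ plays no role here, since only the single random variable $\alpha_{C'}$ enters). Hence $1+\alpha_{C'}$ is uniform on $[1,2]$, and scaling a uniform random variable by the deterministic non-negative constant $\delta$ yields a random variable uniformly distributed on $[\delta,\,2\delta]$. Substituting back $\delta = d_{H\boxtimes P}(v,\overline{C})$ gives exactly the claim.

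There is essentially no obstacle here; the only point worth flagging is the measure-theoretic edge case $\delta = 0$, in which the stated interval degenerates to the single point $\{0\}$ and the ``uniform'' distribution is the point mass at $0$, which is consistent with $\varphi_I(v) = (1+\alpha_{C'})\cdot 0 = 0$. So the statement holds in all cases and no separate argument is needed for vertices on the boundary of $C$.
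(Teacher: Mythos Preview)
Your proposal is correct and matches the paper's approach; indeed, the paper states this as an observation without proof, and your unpacking of the definition of $\varphi_I$ together with the uniformity of $\alpha_{C'}$ on $[0,1]$ is exactly the intended one-line justification. Your handling of the degenerate case $\delta=0$ is a nice addition that the paper does not explicitly address.
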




\begin{clm}\label{double_distance}
  For any  $v,w\in V(H\boxtimes P)\setminus X$,
  \[
    |\varphi_I(v)-\varphi_I(w)| \le 2\, d^*(v,w) \enspace .
  \]
\end{clm}

\begin{proof}
  If $v=w$ then $|\varphi_I(v)-\varphi_I(w)|=0 = 2 d^*(v,w)$, so we assume $v\neq w$. In particular $d^*(v,w)\ge d_{H\boxtimes P}(v,w)\ge 1$.  There are three cases to consider, depending on the placement of $v$ and $w$ with respect to the components of $I$ and $J$.

  \begin{compactenum}
    \item 
     If $v$ and $w$ are in different components $C_v$ and $C_w$ of $I$ then, for some $\alpha_v,\alpha_w\in[0,1]$,
    \begin{align*}
       |\varphi_I(v)-\varphi_I(w)|
      & =|(1+\alpha_v)d_{H\boxtimes P}(v,\overline{C_v})-(1+\alpha_w)d_{H\boxtimes P}(w,\overline{C_w})| \\
      & \le 2\max\{d_{H\boxtimes P}(v,\overline{C_v}), d_{H\boxtimes P}(w,\overline{C_w})\}
      - \min\{d_{H\boxtimes P}(v,\overline{C_v}), d_{H\boxtimes P}(w,\overline{C_w})\} \\
      & \le 2\left(d_{H\boxtimes P}(v,\overline{C_v}) + d_{H\boxtimes P}(w,\overline{C_w})\right) - 3\\
      & \le 2d_{H\boxtimes P}(v,w) - 1 \le 2d^*(v,w) \enspace ,
    \end{align*}
    where the penultimate inequality follows from the fact that every path in $H\boxtimes P$ from $v$ to $w$ contains a minimal subpath that begins at $v$ and ends in $\overline{C_v}$ and a minimal subpath begins in $\overline{C_w}$ and ends at $w$.  These two subpaths have at most one edge in common, so
    $d_{H\boxtimes P}(v,\overline{C_v}) + d_{H\boxtimes P}(w,\overline{C_w}) \le d_{H\boxtimes P}(v,w)+1$.  We now  assume that $v$ and $w$ are in the same component, $C$, of $I$.

    \item If $v$ and $w$ are in the same component $C'$ of $J$, then $v$ and $w$ are in the same component $C$ of $I$.  Then
    \begin{align*}
      |\varphi_I(v)-\varphi_I(w)|
      & =(1+\alpha_{C'})|d_{H\boxtimes P}(v,\overline{C})-d_{H\boxtimes P}(w,\overline{C})| \\
      & \le 2|d_{H\boxtimes P}(v,\overline{C})-d_{H\boxtimes P}(w,\overline{C})| \\
      & \le 2 d_{H\boxtimes P}(v,w) \le 2d^*(v,w) \enspace ,
    \end{align*}
    where the penultimate inequality is obtained by rewriting the triangle inequalities $d_{H\boxtimes P}(v,\overline{C})\le d_{H\boxtimes P}(v,w)+d_{H\boxtimes P}(w,\overline{C})$ and $d_{H\boxtimes P}(w,\overline{C})\le d_{H\boxtimes P}(w,v)+d_{H\boxtimes P}(v,\overline{C})$.

    \item It remains to consider the case where $v$ and $w$ are in the same component $C$ of $I$ but in different components $C'_v$ and $C'_w$ of $J$.  This happens because there exists some $i$ and $j$ such that $C$ is contained in $Q^+_{i,j}$ but $v$ and $w$ are in different components of $Q^+_{i,j}-X_{i,j}$. In this case, $d^*(v,w)\ge d_{i,j}(v,w)=d_P(v_P,x)+d_P(x,w_P)$ for some $x\in \overline{P^+_{i,j}}$.  Since $C$ is contained in $Q^+_{i,j}$, $d_{H\boxtimes P}(v,\overline{C})\le d_{P}(v_P,x)$ and $d_{H\boxtimes P}(\overline{C},w)\le d_{P}(x,w_P)$. Therefore $d^*(v,w)\ge d_{H\boxtimes P}(v,\overline{C})+d_{H\boxtimes P}(w,\overline{C})$. Therefore
    \begin{align*}
        |\varphi_I(v)-\varphi_I(w)|
        & =|(1+\alpha_{C'_v})d_{H\boxtimes P}(v,\overline{C})-(1+\alpha_{C'_w})d_{H\boxtimes P}(w,\overline{C})| \\
        & \le 2\max\{d_{H\boxtimes P}(v,\overline{C}), d_{H\boxtimes P}(\overline{C},w)\} \\
        & \le 2\left(d_{H\boxtimes P}(v,\overline{C}) + d_{H\boxtimes P}(\overline{C},w)\right) \\
        & \le 2\, d^*(v,w) \enspace . \qedhere
    \end{align*}
  \end{compactenum}
\end{proof}

\paragraph{\boldmath The Euclidean Embedding $\phi$.}
\label{Euclidean_embedding_section}


Let $a>0$ be a constant whose value will be bounded from below later.  We now define a random function $\phi:V((H\boxtimes P)-X)\to\R^{L}$ where $L:=\lfloor 1+\log n\rfloor\cdot\lceil a k\ln n\rceil$. For each $i\in\{0,\ldots,\log N-1\}$ and each $j\in\{1,\ldots,\lceil a k\ln n\rceil\}$, let $I_{i,j}:=I(H,P,2^i,r_{H,i,j},r_{P,i,j})$ be an instance of the random subgraph $I$ defined in the previous section with parameter $\Delta=2^i$ and where random offsets $r_{H,i,j},r_{P,i,j}\in\{0,\ldots,\Delta-1\}$ are chosen independently for each instance.  From each $I_{i,j}$ and the sets $\{X_{i',j'}:i'\in\{0,\ldots,\log N-1\},\, j'\in\{1,\ldots,N/2^i-1\}\}$, we  define the subgraph $J_{i,j}$ of $I_{i,j}$ as in the previous section.  This defines a uniformly random $\alpha_{C'}$ for each component $C'$ of $J_{i,j}$, with all random choices made independently.  This defines, for each $v\in V(J_{i,j})$, the value $\varphi_{I_{i,j}}(v)$ and we let $\mathdefin{\phi_{i,j}(v)}:= \varphi_{I_{i,j}}(v)$.

Finally, define the Euclidean embedding $\phi:V((H\boxtimes P)-X)\to\R^L$ as
\[
   \phi(x) := \left(\phi_{i,j}(x):(i,j)\in \{0,\ldots,\lfloor \log n\rfloor\}\times\{1,\ldots,\lceil a k\ln n\rceil\}\right) \enspace .
\]

The following lemma says that $\phi/2\sqrt{L}$ is a Euclidean contraction of $(V((H\boxtimes P)- X),d^*)$.  In a final step, we divide each coordinate of $\phi$ by $2\sqrt{L}$ to obtain an Euclidean contraction. Until then, it is more convenient to work directly with $\phi$.


\begin{clm}\label{euclidean_contraction}
  For each $v,w\in V((H\boxtimes P)-X)$, 
  \[
  d_2(\phi(v),\phi(w)) \le 2\sqrt{L}\cdot d^*(v,w) \enspace .\]
\end{clm}

\begin{proof}
  By \cref{double_distance}, $|\phi_{i,j}(v)-\phi_{i,j}(w)|\le 2d^*(v,w)$ for each $(i,j)\in\{0,\ldots,\log N\}\times\{1,\ldots,\lceil ak\ln n\rceil\}$.  Therefore,
  \[
    d_2(\phi(v),\phi(w)) = \left(\sum_{i,j}(\phi_{i,j}(v)-\phi_{i,j}(w))^2\right)^{1/2}
    \le \left(L (2d^*(v,w))^2\right)^{1/2} = 2\sqrt{L}\cdot d^*(v,w) \enspace . \qedhere
  \]
\end{proof}

The remaining analysis in this section closely follows \citet{rao:small}, which in turn closely follows \citet{feige:approximating}.  The main difference is that we work with $d^*$ rather than $d_G$.  We proceed slowly and carefully since our setting is significantly different, and we expect that many readers will not be familiar with some methods introduced by \citet{feige:approximating} that are only sketched by \citet{rao:small}. We make use of the following simple \defin{Chernoff Bound}:  For a $\operatorname{binomial}(n,p)$ random variable $B$, $\Pr(B \le np/2) \le \exp(-np/8)$.

Let $\Gamma_k:=\{(\lambda_1,\ldots,\lambda_k)\in \R^k:\sum_{j=1}^k\lambda_j=1\}$; that is, $\Gamma_k$ is the set of coefficients that can be used to obtain an affine combination of $k$ points.  The following lemma is the crux of the proofs in \cite{rao:small,feige:approximating}. We emphasize that in this lemma, probability is with respect to the random choices made when constructing the embedding $\phi$.  The vertices $v_1,\ldots,v_p$ are arbitrary (not necessarily random) vertices of $(H\boxtimes P)-X$.  In this lemma, it is critical that the function $\lambda$ chooses an affine combination $\lambda_1,\ldots,\lambda_{p-1}$ by only considering $\phi(v_1),\ldots,\phi(v_{p-1})$.  Thus any dependence between $\lambda_1,\ldots,\lambda_{p-1}$ and $\phi(v_p)$ is limited to the random choices made during the construction of $\phi$ that contribute to $\phi(v_1),\ldots,\phi(v_{p-1})$.

\begin{clm}\label{crux} 
  Fix some function $\lambda:(\R^{L})^{p-1}\to \Gamma_{p-1}$.
  Let $v_1,\ldots,v_p$ be distinct vertices of $(H\boxtimes P)-X$ and let $h:=d^*(v_p,\{v_1,\ldots,v_{p-1}\})$.  Let $(\lambda_1,\ldots,\lambda_{p-1}):=\lambda(\phi(v_1),\ldots,\phi(v_{p-1}))$ and let $x:=\sum_{j=1}^{p-1}\lambda_j\phi(v_j)$.
  Then, for all $a\ge 193$, $n\ge 2$, and $k\ge 2$,
  \[
    d_2(\phi(v_p),x)\ge \frac{h\sqrt{\lceil ak\ln n\rceil}}{640\sqrt{2}} \enspace ,
  \]
  with probability at least $1-n^{-3k}$.
\end{clm}

\begin{proof}
  If $h \le 5$ then let $i:=0$.  Otherwise, let $i$ be the unique integer such that $h/10\le 2^i < h/5$. Let $\Delta:=2^i$. To prove the lower bound on $d_2(\phi(v_p),x)$, we will only use the coordinates $\phi_{i,1},\ldots,\phi_{i,\lceil a k\ln n\rceil}$.  For each $j\in\{1,\ldots,\lceil ak\ln n\rceil\}$, let $C_{i,j}$ and $C'_{i,j}$ be the components of $I_{i,j}$ and $J_{i,j}$, respectively, that contain $v_p$. We say that $j\in\{1,\ldots,\lceil a k\ln n\rceil\}$ is \defin{good} if $d_{H\boxtimes P}(v_p,\overline{C_{i,j}})\ge \Delta/4$.  By \cref{good_probability},  $\Pr(\text{$j$ is good})\ge 1/4$. Let $S:=\{j\in\{1,\ldots,\lceil a k\ln n\rceil\}:\text{$j$ is good}\}$.  Since $I_{i,1},\ldots,I_{i,\lceil a k\ln n\rceil}$ are mutually independent, $|S|$ dominates\footnote{We say that a random variable $X$ \defin{dominates} a random variable $Y$ if $\Pr(X\ge x)\ge\Pr(Y\ge x)$ for all $x\in\R$.} a \linebreak $\operatorname{binomial}(\lceil a k\ln n\rceil,1/4)$ random variable. By the Chernoff Bound,
  \[
  \Pr(|S|\ge \tfrac{1}{8}\lceil a k\ln n\rceil)\ge 1-\exp(-(ak\ln n)/32) \enspace .
  \]

  By \cref{uniform}, $\phi_{i,j}(v_p)$ is uniformly distributed over an interval of length at least $\Delta/4$, for each $j\in S$.  We claim that the location of $\phi_{i,j}(v_p)$ in this interval is independent of the corresponding coordinate, $x_{i,j}$, of $x$.  If $\Delta=1$, then $v_p$ is the only vertex in $C'_{i,j}$.  Otherwise, since $\Delta<h/5$, \cref{dstar_component_diameter} implies that $C'_{i,j}$ does not contain any of $v_1,\ldots,v_{p-1}$. In either case, $C'_{i,j}$ does not contain any of $v_1,\ldots,v_{p-1}$.  Therefore, the location of $\phi_{i,j}(v_p)$ is determined by a random real number $\alpha_{i,j}:=\alpha_{C'_{i,j}}\in[0,1]$ that does not contribute to $\phi(v_1),\ldots,\phi(v_{p-1})$.  Since $(\lambda_1,\ldots,\lambda_{p-1})=\lambda(\phi(v_1),\ldots,\phi(v_{p-1}))$ is completely determined by $\phi(v_1),\ldots,\phi(v_{p-1})$, it follows that $\alpha_{i,j}$ is independent of $x=\sum_{k=1}^{p-1}\lambda_k\phi(v_k)$. In particular, $\alpha_{i,j}$ is independent of $x_{i,j}$.

  Therefore, for $j\in S$, $\Pr(|\phi_{i,j}(v_p)-x_{i,j}|\ge \Delta/16)\ge 1/2$.\footnote{The coordinate $\phi_{i,j}(v_p)$ is uniform over some interval $[a,b]$ of length $b-a\ge \Delta/4$ whereas $[x_{i,j}-\Delta/16,x_{i,j}+\Delta/16]$ has length $\Delta/8$, so $\Pr(|\phi_{i,j}(v_p)-x_{i,j}|\ge \Delta/16)\ge (b-a-\Delta/8)/(b-a)\ge 1/2$.}
  Let $S':=\{j\in S:  |\phi_{i,j}(v_p)-x_{i,j}|\ge \Delta/16\}$.  Then $|S'|$ dominates a $\operatorname{binomial}(|J|,1/2)$ random variable.  By the Chernoff Bound (and the union bound),
  \[
    \Pr(|S'|\ge \tfrac{1}{32}\lceil a k\ln n\rceil)\ge 1-\exp(-ak\ln n/64)-\exp(-ak\ln n/32)\ge 1-n^{-3k} \enspace ,
  \]
  for all $a\ge 193$, $n\ge 2$, and $k\ge 2$.
  Therefore,
  \begin{align*}
    d_2(\phi(v_p),x)
    & = \left(\sum_{i'=0}^{\lfloor\log n\rfloor}\sum_{j=1}^{\lceil ak\ln  n\rceil}(\phi_{i',j}(v_p)-x_{i',j})^2\right)^{1/2} \\
    & \ge \left(\sum_{j=1}^{\lceil ak\ln  n\rceil}(\phi_{i,j}(v_p)-x_{i,j})^2\right)^{1/2} \\
    & \ge \left(\sum_{j\in S'}(\Delta/16)^2\right)^{1/2} \\
    & \ge \left((\Delta/16)^2\cdot \tfrac{1}{32}\lceil ak\ln  n\rceil\right)^{1/2}
      & \text{(with probability at least $1-n^{-3k}$)} \\
    & = \frac{\Delta\sqrt{\lceil ak\ln n\rceil}}{64\sqrt{2}} \\
    & \ge \frac{h\sqrt{\lceil ak\ln n\rceil}}{640\sqrt{2}}
     & \text{(since $\Delta\ge h/10$)}. &
    \qedhere
  \end{align*}
\end{proof}

A variant of the following lemma is proven implicitly by \citet[pages~529--530]{feige:approximating}.  For completeness, we include a proof in \cref{volume_preserver_proof}.

\begin{restatable}{clm}{volumepreserverClaim}
\label{volume_preserver}
  For every $k$-element subset $K$ of $V((H\boxtimes P)-X)$,
  \[
    \Pr\left(\evol(\phi(K)) \ge \frac{\tvol_{d^*}(K)\cdot(2\zeta/3)^{k-1}}{(k-1)!}\right) \ge 1- O(kn^{-k}) \enspace .
  \]
  where $\zeta:=\sqrt{\lceil ak\ln n\rceil}/(640\sqrt{2})$ is the expression that also appears in \cref{crux}.
\end{restatable}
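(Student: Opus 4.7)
The plan is to reduce the volume inequality to $k-1$ scalar inequalities and then invoke \cref{crux} at each step. Order the vertices of $K$ according to Prim's algorithm on the weighted complete graph on $K$ with edge weights $d^*$: pick $v_1$ arbitrarily, and iteratively let $v_i$ be the element of $K\setminus\{v_1,\ldots,v_{i-1}\}$ minimizing $w_i:=d^*(v_i,\{v_1,\ldots,v_{i-1}\})$. Since Prim's algorithm produces a minimum spanning tree,
\[
  \tvol_{d^*}(K) \;=\; \prod_{i=2}^{k} w_i \enspace .
\]
On the other hand, the classical base-times-height recurrence for simplex volumes gives
\[
  \evol(\phi(K)) \;=\; \frac{1}{(k-1)!}\prod_{i=2}^{k} h_i \enspace ,
\]
where $h_i$ is the Euclidean distance from $\phi(v_i)$ to the affine hull $A_i\subseteq\R^L$ of $\phi(v_1),\ldots,\phi(v_{i-1})$. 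Dividing the two products, the target inequality reduces to showing that, with probability $1-O(kn^{-k})$, every $i\in\{2,\ldots,k\}$ satisfies $h_i\ge(2\zeta/3)\,w_i$.

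For a fixed $i$, the natural attempt is to apply \cref{crux} with $(\lambda_1,\ldots,\lambda_{i-1})$ equal to the barycentric coordinates of the orthogonal projection $\pi_i$ of $\phi(v_i)$ onto $A_i$, which would give $h_i\ge\zeta w_i$ immediately. The obstacle is that \cref{crux} requires $\lambda$ to be a function of $\phi(v_1),\ldots,\phi(v_{i-1})$ alone, whereas $\pi_i$ depends on $\phi(v_i)$. I would overcome this with a discretization argument. The bound $\phi_{i',j'}(v)\le 2d_{H\boxtimes P}(v,\overline{C})\in O(n)$ from \cref{uniform} makes every coordinate of every $\phi(v_j)$ at most $O(n)$, and since $d^*\ge d_{H\boxtimes P}\ge 1$ between distinct vertices, $w_i\ge 1$. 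Hence the bad event $h_i<(2\zeta/3)w_i$ forces $\pi_i$ to lie in a region $R_i\subseteq A_i$ of diameter $\mathrm{poly}(n)$ whose position depends only on $\phi(v_1),\ldots,\phi(v_{i-1})$. Let $\mathcal{N}_i\subseteq R_i$ be a $(\zeta w_i/3)$-net; since $\dim A_i\le k-1$, one can arrange $|\mathcal{N}_i|\le n^{O(k)}$. On the bad event, some $x\in\mathcal{N}_i$ satisfies $d_2(\phi(v_i),x)<(2\zeta/3)w_i+\zeta w_i/3=\zeta w_i$, contradicting \cref{crux} applied to the constant-valued function that returns the barycentric coordinates of $x$. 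A union bound over $\mathcal{N}_i$ shows the bad event has probability at most $|\mathcal{N}_i|\cdot n^{-3k}\le n^{-2k}$.

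A second union bound over $i\in\{2,\ldots,k\}$ then yields the claimed overall failure probability of $O(kn^{-k})$. The main obstacle, as flagged above, is calibrating the discretization to meet three competing requirements: $\mathcal{N}_i$ must be fine enough ($\epsilon\le\zeta w_i/3$) to approximate $\pi_i$ within the needed tolerance, coarse enough ($|\mathcal{N}_i|\le n^{O(k)}$) that the per-element failure probability $n^{-3k}$ survives the union bound with room to spare, and chosen independently of $\phi(v_i)$ so that \cref{crux} is legitimately applicable. The polynomial pointwise bound on $\phi$ coming out of the construction in \cref{Euclidean_embedding_section} is what makes all three compatible.
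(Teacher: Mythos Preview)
Your proposal is correct and follows essentially the same route as the paper's proof: order $K$ by Prim's algorithm, reduce via the base-times-height recurrence to the scalar inequalities $h_i\ge(2\zeta/3)w_i$, then handle the measurability obstruction in \cref{crux} by discretizing the affine hull $A_i$ with a net of size $n^{O(k)}$ and union-bounding over net points. Two small cleanups: the net points (and their barycentric coordinates) are functions of $\phi(v_1),\ldots,\phi(v_{i-1})$, not literally constant-valued, though this is exactly what \cref{crux} permits; and to make the arithmetic $|\mathcal{N}_i|\cdot n^{-3k}\le n^{-k}$ close you should pin down $|\mathcal{N}_i|\le n^{2k}$ (as the paper does) rather than just $n^{O(k)}$, using the $O(n\sqrt{L})$ coordinate bound and the $(i-2)$-dimensionality of $A_i$.
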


We now have all the pieces needed to complete the proof of \cref{dstar_contraction}.

\begin{proof}[Proof of \cref{dstar_contraction}]
  For each $v\in V(H\boxtimes P)$, let $\phi'(v):=\phi(v)/2\sqrt{L}$. By \cref{dstar_contraction}, $\phi'$ is a Euclidean contraction of $\mathcal{M}^*$.  By \cref{volume_preserver}, for each $K\in \binom{V(G)}{k}$,
  \begin{equation}
    \Pr\left(\evol(\phi'(K)) \ge \frac{\tvol_{d^*}(K)\cdot(2\zeta/3)^{k-1}}{(k-1)!(2\sqrt{L})^{k-1}}\right) \ge 1- O(kn^{-k}) \enspace .
    \label{zippy}
  \end{equation}
  By the union bound, the probability that the volume bound in \cref{zippy} holds for every $K\in\binom{V(G)}{k}$ is at least $1-O(\binom{n}{k}kn^{-k}) > 0$ for all sufficiently large $n$.  When this occurs,
  \[
    \evol(\phi'(K)) \ge \frac{\tvol_{d^*}(K)\cdot(2\zeta/3)^{k-1}}{(k-1)!(2\sqrt{L})^{k-1}} \ge
    \frac{\ivol_{d^*}(K)\cdot(2\zeta/3)^{k-1}}{(2\sqrt{L})^{k-1}}
    =
    \frac{\ivol_{d^*}(K)\cdot\zeta^{k-1}}{(3\sqrt{L})^{k-1}}
    \enspace ,
  \]
  by \cref{tvol_vs_ivol}.  Then $\phi'$ is a $(k,\eta)$-volume-preserving contraction for
  \[
    \eta = \frac{3\sqrt{L}}{\zeta} = \frac{3\cdot 640\sqrt{2L}}{\sqrt{ak\ln n}} = {1920\sqrt{2\floor{1+\log n}}} \in 
    O(\sqrt{\log n}) \enspace . \qedhere
  \]
\end{proof}

We now complete the proof of the main result of this section. 


\begin{proof}[Proof of \cref{rtw-flex}] 
Let $G$ be a graph with row treewidth $b$. 
We may assume that $G$ is connected.
By assumption, $G$ is contained in $H\boxtimes P$ for some graph $H$ with treewidth $b$ and for some path $P$.
We may assume without loss of generality that $H$ is a $b$-tree. 
For simplicity, we assume $G$ is a subgraph of $H\boxtimes P$. 
Let $\delta\in\R_{\geq 1}$ be any multiplier. 
Let $X\subseteq V(H\boxtimes P)$ be the set defined in \cref{x_definition}, so $|X|\in O((bn\log n)/\delta)$. 
Let $d^*$ (which depends on $X$) be the distance function defined in \cref{d_star_definition}.  By \cref{supergraph_contraction,d_star_summary}, the metric space $\mathcal{M}^*:=(V(G)\setminus X,d^*)$ is a contraction of the graphical metric $\mathcal{M}_{G-X}$, and
$\mathcal{M}^*$ has local density at most $\delta$. Let $k:=\ceil{\log n}$ and $\eta:=\sqrt{\log n}$.
By \cref{dstar_contraction}, $\mathcal{M}^*$ has a $(k,O(\eta))$-volume-preserving Euclidean contraction.  Therefore, by \cref{volume_density_bandwidth}, 
$\bw(\mathcal{M}^*)\in
O( (nk\log\Delta)^{1/k} \delta k\eta \log^{3/2} n)$.
Since $\Delta\leq n$ and $k=\ceil{\log n}$, we have
$(nk\log\Delta)^{1/k}\in O(1)$.
Thus
$\bw(\mathcal{M}^*)\in O(  \delta k\eta \log^{3/2} n) \in O( \delta \log^3 n)$.
By \cref{contraction_increases_bandwidth}, 
$\bw(G-X) =\bw(\mathcal{M}_{G-X})\le \bw(\mathcal{M}^*) \in
O(\delta \log^3 n)$.
\end{proof}

\section{\boldmath $K_h$-Minor-Free Graphs}
\label{KtMinorFree}

This section proves \cref{minor-fanblowup} for $K_h$-minor-free graphs. The starting point is the tree-decomposition in \cref{GMST}.  
The proof then consists of two steps. The first `splits the tree-decomposition', and the second `processes the smaller sections'. Step~1 is primarily handled by \cref{mainLem}, Step~2 by \cref{starDecomp}, and the other results assist in one of these two steps. We now explain the ideas behind these steps.

For a graph class $\GG$, let \defin{$\widehat{\GG}$} be the class of graphs $G$ such that there exists a set $Z\subseteq V(G)$ of degree 1 vertices in $G$ such that $G-Z\in \GG$. For a class of graphs $\GG$ and integer $a\geq 0$, let \defin{$\Apex{\GG}{a}$} be the class of graphs $G$ such that $G-X\in \GG$ for some $X\subseteq V(G)$ with $|X|\leq a$. We call $X$ the \defin{apices} of $G$. Observe that if $\GG$ is closed under adding isolated vertices, then $\widehat{\Apex{\GG}{a}}\subseteq \Apex{\widehat{\GG}}{a}$.

    Starting with a $K_h$-minor-free graph $G$, let $b,k$ be from \cref{GMST}, let $\scr{H}$ be the class of all graphs with row treewidth at most $b$, and let $a:=\max(h-5,0)$. By \cref{GMST}, there is a tree-decomposition $(B_t:t\in V(T))$ of $G$ of adhesion at most $k$ such that each torso is in $\Apex{\scr{H}}{a}$. Our goal is to find a small set of vertices whose removal breaks the graph, and the tree-decomposition, into smaller chunks. To do this, we use the same method as \citet{distel.dujmovic.ea:product}. In particular, 
    \cref{treeDeletions,mainLem} give a small set $Z\subseteq V(T)$ whose deletion splits $T$ into subtrees $T'$ such that the total number of vertices contained in $\bigcup_{t\in V(T')}B_t$ is small. 
    
    Consider $T$ to be rooted at a vertex $r$. For each $z\in Z\setminus \{r\}$, delete the at most $k$ vertices in $B_z$ that are in the bag of $z$'s parent. The total number of vertices deleted is small, but it has the effect of breaking the graph and tree-decomposition into manageable chunks. Specifically, each chunk has a star-decomposition $(B'_s:s\in V(S))$ of  adhesion at most $k$, such that if $r'$ is the centre of $S$, then the torso at $r'$ is in $\Apex{\scr{H}}{a}$, and each bag $B'_s$, $s\neq r'$, is small. 
We then process these chunks separately, showing bandwidth-flexibility for each chunk. Using this flexibility, we can guarantee that the total number of `removed' vertices across all chunks is still small, while maintaining that the bandwidth of the non-removed vertices in each chunk is small with respect to $|V(G)|$. 

We now sketch Step 2, where the smaller chunks are processed. Since the central torso is in $\Apex{\scr{H}}{a}$, the majority of the work is done by \cref{rtw-flex}, since we can just add the apex vertices to the set of removed vertices; see \cref{addApex}. However, we still need to attach the vertices in the smaller bags $B'_s$, $s\neq r$. For this, we create an auxiliary graph $G'$, obtained from the central torso by attaching degree-1 vertices. Specifically, for each vertex $v$ in some bag $B'_s$, $s\neq r$, and each vertex $x$ in $B'_s\cap B'_r$, we add an auxiliary vertex $v_x$ adjacent to $x$. See \cref{FigDeg1}. The resulting graph is in $\widehat{\Apex{\scr{H}}{a}}$. However, $\widehat{\Apex{\scr{H}}{a}}=\Apex{\scr{H}}{a}$ as $\widehat{\scr{H}}=\scr{H}$ (see \cref{rtwExt}) and since $\scr{H}$ is closed under adding isolated vertices. \cref{rtw-flex,addApex} then establish bandwidth-flexibility for $G'$, where $X$ is the set of removed vertices.
  
For each $B'_s$, $s\neq r$, one of two things must happen. One possibility is that $B'_s\cap B'_r\subseteq X$, in which case $B'_s$ is easily accommodated in the small bandwidth ordering. The second possibility is that there exists some $x\in (B'_s\cap B'_r)\setminus X$. In this case, for each vertex $v\in B'_s\setminus B'_r$, one of two outcomes must have occurred. Either $v_x\in X$, in which case $v$ is added to the set of removed vertices, or $v_x$ is at distance at most $2$ in $G'-X$ from every vertex in $(B'_s\cap B'_r)\setminus X$ and every vertex $v'_x$ with $v'\in B'_s\setminus B'_r$ and $v'_x\notin X$. In particular, these vertices are adjacent in $(G'-X)^2$. Observe that\footnote{For $d\in\NN_1$, \defin{$G^d$} is the \defin{$d$-th power} of $G$, which is the graph with vertex set $V(G)$, with $vw\in E(G^d)$ if and only if $1\leq\dist_G(v,w)\leq d$. If $\bw_G(v_1,\dots,v_n)\leq k$ then $\bw_{G^d}(v_1,\dots,v_n)\leq kd$. So $\bw(G^d)\leq d\,\bw(G)$.} the bandwidth of $(G'-X)^2$ is within a constant factor of the bandwidth of $G'-X$. So we can use a small bandwidth ordering of $(G'-X)^2$ to determine a small bandwidth ordering of $G-X$, as desired. 

We now start the formal proof of \cref{minor-fanblowup}.

    \begin{lem}
        \label{rtwExt}
        For every class of graphs $\GG$, $\rtw(\widehat{\GG})\leq \max(\rtw(\GG),1)$.
    \end{lem}

    \begin{proof}
        Let $b:=\max(\rtw(\GG),1)$. Fix $G\in \widehat{\GG}$. So there exists a set $Z$ of degree 1 vertices in $G$ such that $G-Z\in \GG$. Thus, there exists a graph $H$ of treewidth at most $b$ and a path $P$ such that $G-Z$ is contained in $H\boxtimes P$. Let $E_z$ be the set of edges of $G$ with an endpoint in $Z$. We may consider each edge in $E_z$ to be of the form $(h,p)z$ with $h\in V(H)$, $p\in V(P)$, $z\in Z$.
        
        Let $H'$ be the graph with vertex set $V(H)\cup Z$ and edge set $E(H)\cup \{hz:(h,p)z\in E_z\}$. Observe that $H'$ is obtained from $H$ by adding some number of degree 1 vertices. Thus, $\tw(H')\leq \max(\tw(H),1)\leq b$. Next, notice that for each $(h,p)z\in E_z$, we have $hz\in E(H')$ and thus $(h,p)(z,p)\in E(H'\boxtimes P)$. Therefore, $G$ is isomorphic to a subgraph of $H'\boxtimes P$, via the mapping $(h,p)\mapsto (h,p)$ for $v=(h,p)\in V(G-Z)$, and $z\mapsto (z,p)$ for the unique $(h,p)\in V(G-Z)$ such that $(h,p)z\in E_z$. This gives the desired result.
    \end{proof}

\begin{lem}
\label{addApex}
Let $f,g:\NN\mapsto \R^+$ be non-decreasing functions, let $a\in \NN$, and let $\GG$ be a class with  $(f,g)$-bandwidth. Then $\Apex{\GG}{a}$ has $(f+a,g)$-bandwidth.
\end{lem}

\begin{proof}
Fix $G\in \Apex{\GG}{a}$, and let $n:=|V(G)|$. There is a set $X_1$ of at most $a$ vertices in $G$ such that $G-X_1\in \GG$. Since $\GG$ has $(f,g)$-bandwidth, there is a set $X_2\subseteq V(G-X_1)$ such that $|X_2|\leq f( |V(G-X_1-X_2)| )\leq f(n)$ and $\bw(G-X_1-X_2)\leq g(|V(G-X_1-X_2)|) \leq g(n)$. Here, we use that $f$ and $g$ are non-decreasing. Let $X':=X_1\cup X_2$. Thus $|X'|\leq f(n)+a$ and $\bw(G-X')\leq g(n)$. Hence $\Apex{\GG}{a}$ has $(f+a,g)$-bandwidth.
 \end{proof}

We also use the following standard separator lemma, similar in spirt to \cref{weighted_separator}; see \citet[Lemma~9]{distel.dujmovic.ea:product} for a proof. 


\begin{lem}
\label{treeDeletions}
For each $q\in\NN$ and $n\in \R^+$, every vertex-weighted  tree $T$ with total weight at most $n$ has a set $Z$ of at most $q$ vertices such that each component of $T-Z$ has weight at most $\frac{n}{q+1}$. 
\end{lem}




The next lemma corresponds to Step~2 in the above sketch. 
The proof uses the following observation: for any graph $G$ and $w\in \NN$, $\bw(G)\leq w$ if and only if there exists an injective function $\sigma:V(G)\mapsto \NN$ such that $|\sigma(u)-\sigma(v)|\leq w$ for each edge $uv\in E(G)$. 



    \begin{lem}
        \label{starDecomp}
        Let $k,n\in \NN$, let $w\in \R^+$, let $f:\NN\mapsto \R^+$ and $g:\NN\mapsto \R_{\geq 1}$ be non-decreasing functions, and let $\scr{H}$ be a class of graphs such that $\widehat{\scr{H}}$ has $(f,g)$-bandwidth. Let $G$ be an $n$-vertex graph that admits a star-decomposition $(B_s:s\in V(S))$ of adhesion at most $k$ such that, if $r$ is the centre of $S$, then:
        \begin{itemize}
            \item $\UWT{G}{B_r}\in \scr{H}$, and
            \item for each $s\in V(S)\setminus \{r\}$, $|B_s\setminus B_r|\leq w$.
        \end{itemize}
        Then $G$ has $(f(kn),\max(2g(kn),w))$-bandwidth.
    \end{lem}

    \begin{proof}
        For each $s\in V(S)\setminus \{r\}$, let $K_s:=B_s\cap B_r$ and $B_s':=B_s\setminus B_r$. Note that $|K_s|\leq k$ and $|B_s'|\leq w$. Let $G'$ be the graph obtained from $\UWT{G}{B_r}$ by adding, for each $s\in V(S)\setminus \{r\}$, each $v\in B_s'$, and each $x\in K_s$, a new vertex $v_x$ adjacent only to $x$; see \cref{FigDeg1}. Observe that $G'\in \widehat{\scr{H}}$, and that $|V(G')|\leq |B_r|+k|V(G)\setminus B_r|\leq k|V(G)|=kn$. For each $s\in V(S)\setminus \{r\}$ and each $v\in B_s'$, let $M_v:=\{v_x:x\in K_s\}$.
        \begin{figure}[!ht]
        \begin{center}
        \begin{minipage}{0.85\textwidth}
        \centering
        \includegraphics[width=\textwidth]{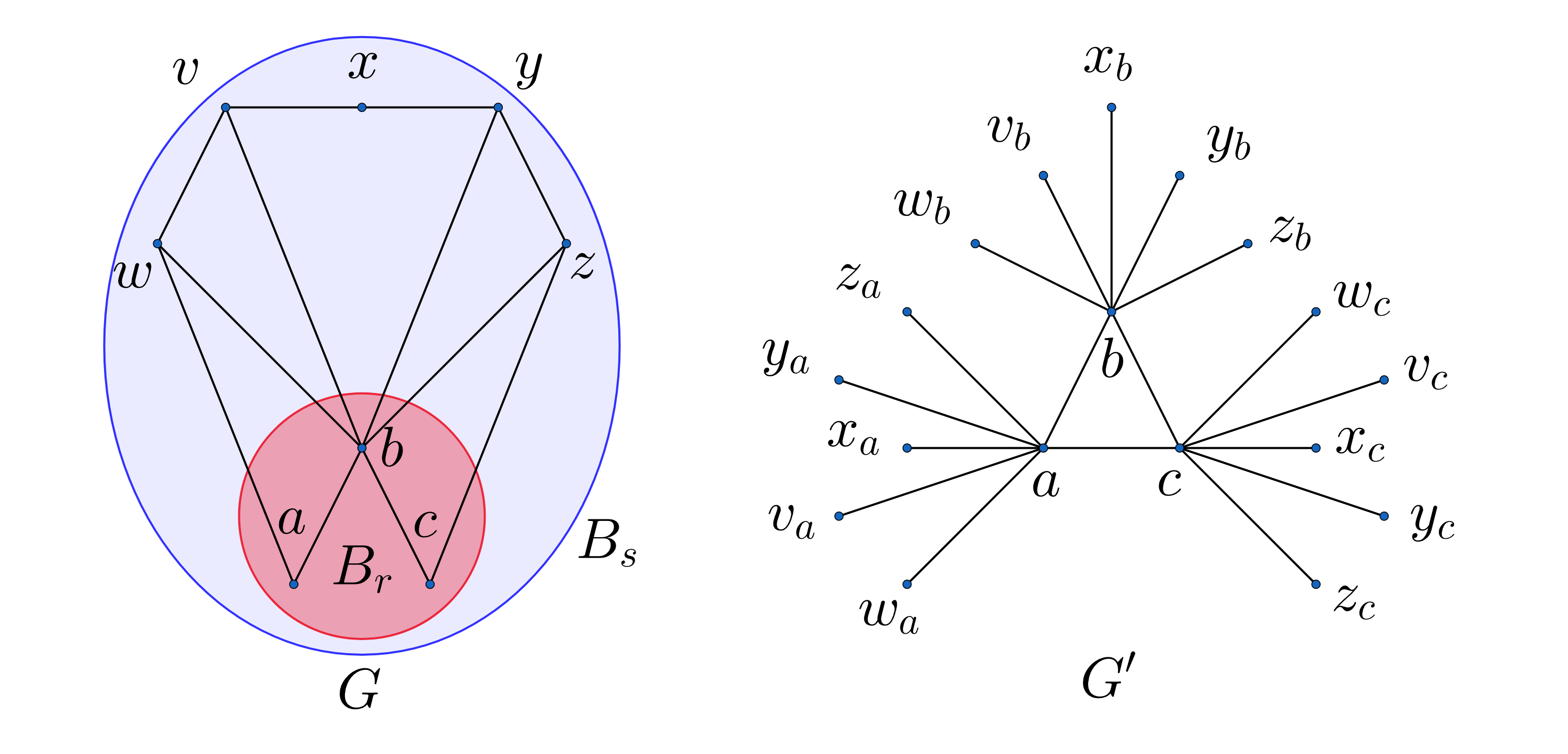}
        \caption{\label{FigDeg1}A diagram of how $G$ is turned into $G'$. The left graph is $G$, the right graph is $G'$. On the left, the vertices in the red section are $B_r$, where $r$ is the centre. The vertices in the blue section (all vertices) are $B_s$. Note that, on the right, each vertex in $B_s\setminus B_r=B_s'$ has been replaced by three new vertices, each adjacent to a different vertex of $B_r=\{a,b,c\}$, and that $B_r$ has become a clique. In this case, $M_v=\{v_a,v_b,v_c\}$ (and similar for $M_w,M_x,M_y,M_z$).}
        \end{minipage}
        \end{center}
        \end{figure}
        
        By assumption, $G'$ has $(f(|V(G')|),g(|V(G'))$-bandwidth. So there exists $X\subseteq V(G')$ such that $|X|\leq f(|V(G')|)\leq f(kn)$ and $\bw(G'-X) \leq g(|V(G')|)\leq g(kn)$. Here, we use that $f,g$ are non-decreasing. Let $X'_1:=X\cap B_r$, let $X'_2:=\{v\in V(G)\setminus B_r:M_v\cap X\neq \emptyset\}$, and let $X':=X'_1\cup X'_2$. Observe that $|X'_2|\leq |X\setminus B_r|$, thus $|X'|\leq |X|\leq f(kn)$. Also, note that $X'\cap B_r=X\cap B_r=X'_1$.

        We seek to show that $\bw(G-X')\leq \max(2g(kn),w)$. The result follows since $|X'|\leq f(kn)$.
        
        Let $U:=\{s\in V(S)\setminus \{r\}:K_s\setminus X=\emptyset\}$ and $W:=\{s\in V(S)\setminus \{r\}:K_s\setminus X\neq \emptyset\}$. Let $V_U:=(\bigcup_{s\in U}B_s')\setminus X'$ and $V_W:=(\bigcup_{s\in W}B_s')\setminus X'$. For each $s\in U$, define $P_s:=B_s'\setminus X'$. Note that $|P_s|\leq |B_s'|\leq w$. For each $s\in W$, pick some $x\in K_s\setminus X$, set $x_s:=x$ and, for each $v\in B_s'\setminus X'$, set $a_v:=v_x$. Note that as $v\notin X'\supseteq X'_2$, $M_v\cap X=\emptyset$. Hence, $a_v\in M_v$ is not in $X$. For $v\in B_r\setminus X'$, set $a_v:=v$. Note that $a_v\notin X$ as $X\cap B_r=X'\cap B_r$.

        Consider a connected component $C$ of $G-X'$. By definition of $U$ and since $X'\cap B_r=X\cap B_r$, observe that $V(C)$ is either contained in $(B_r\setminus X')\cup V_W$, or in $P_s'$ for some $s\in U$. Thus, to show that $\bw(G'-X)\leq \max(2g(kn),w)$, it suffices to show that $\bw((G-X')[(B_r\setminus X')\cup V_W])\leq \max(2g(kn),w)$, and that, for each $s\in U$, $\bw((G-X')[P_s'])\leq \max(2g(kn),w)$. However, the latter is trivially true since $|P_s'|\leq w$  for each $s\in U$. So we focus on the former statement.

Since $\bw(G'-X)\leq g(kn)$, there exists an injective function $\sigma:V(G'-X)\mapsto \NN$ such that $|\sigma(u)-\sigma(v)|\leq g(kn)$ whenever $uv\in E(G'-X)$. By the triangle inequality, $|\sigma(u)-\sigma(v)|\leq 2g(kn)$ whenever $uv\in E((G'-X)^2)$. Define $\sigma':(B_r\setminus X')\cup V_W\mapsto \NN$ via $\sigma'(v)=\sigma(a_v)$. Note that this is well-defined, as for $v\in (B_r\setminus X')\cup V_W$, $a_v\in V(G')$ exists, and $a_v\notin X$. Further, $\sigma'$ is injective, as $\sigma$ is injective as $a_u\neq a_v$ whenever $u\neq v$ (as $M_u$ and $M_v$ are disjoint whenever $u,v\in V_w$ are distinct, and disjoint to $B_r$). We claim that $|\sigma'(u)-\sigma'(v)|\leq 2g(kn)$ whenever $uv\in E((G-X')[(B_r\setminus X')\cup V_W])$. This implies that $\bw((G-X')[(B_r\setminus X')\cup V_W])\leq \max(2g(kn),w)$.

        We consider three cases.

        Case 1. $u,v\in B_r\setminus X'$:

Then $a_u=u$ and $a_v=v$. Since $u,v\in B_r\setminus X'=B_r\setminus X$ and since $uv\in E(G)$, $uv\in E(G'-X)$. Thus, $|\sigma'(u)-\sigma'(v)|=|\sigma(u)-\sigma(v)|\leq g(kn)$, as desired.

        Case 2. $u,v\in V_W$:

        Note that $u\notin B_r$ and $v\notin B_r$. Since $uv\in E((G-X')[(B_r\setminus X')\cup V_W])\subseteq E(G)$ but $u\notin B_r$ and $v\notin B_r$, there exists exactly one $s\in V(S)\setminus \{r\}$ such that $u,v\in B_s'$. Since $u,v\in V_W$, $s\in W$. Now, if $x:=x_s$, recall that $a_v=v_x$, which is adjacent to $x$ in $G'$, and that $a_u=u_x$, which is adjacent to $x$ in $G'$. Further, recall that $x\notin X$, $a_u\notin X$, and that $a_v\notin X$. Thus, $a_u$ and $a_v$ are at distance at most $2$ in $G'-X$, and adjacent in $(G'-X)^2$. Hence, $|\sigma'(u)-\sigma'(v)|=|\sigma(a_u)-\sigma(a_v)|\leq 2g(kn)$, as desired.

        Case 3. Exactly one of $u,v\in V_W$, and the other is in $B_r\setminus X'$:

        Without loss of generality, $u\in B_r\setminus X'$ and $v\in V_W$. Note that $v\notin B_r$. As in Case 1, $a_u=u\in B_r\setminus X'=B_r\setminus  X$. Similarly to Case 2, since $uv\in E((G-X')[(B_r\setminus X')\cup V_W])\subseteq E(G)$ but $v\notin B_r$, there exists exactly one $s\in V(S)$ such that $u,v\in B_s$, and this $s$ satisfies $s\neq r$, $s\in W$, $v\in B_s'$, and $v\notin B_q'$ for $q\in V(S)\setminus \{r,s\}$. Note also that $u\in K_s$.
        
        If $x:=x_s$, we then have that $a_v=v_x$, which is adjacent to $x$ in $G'$. Recall that $x\notin X$, that $x\in K_s$, and that $a_v\notin X$. Note also that since $u,x\in K_s$, $u$ and $x$ are adjacent in $G'$ by definition of $G'$. Thus, since $\{u,x,a_v\}\cap X=\emptyset$, since $u$ and $x$ are adjacent in $G'$, and since $x$ and $a_v$ are adjacent in $G'$, $a_u=u$ and $a_v$ are at distance at most $2$ in $G'-X$, and are adjacent in $(G'-X)^2$. Hence, $|\sigma'(u)-\sigma(v)|=|\sigma(u)-\sigma(a_v)|\leq 2g(kn)$, as desired.

        Thus, $\bw(G-X')\leq \max(2g(kn),w)$, and $G$ has $((f(kn),\max(g(kn),w))$-bandwidth, as desired.
    \end{proof}
    
\begin{lem}
\label{mainLem}
For any $h\in\NN_1$ there exist $k,b\in\NN_1$ such that the following holds. 
Let $f:\NN\mapsto \R_{\geq 0}$ and
$g:\NN\mapsto \R_{\geq 1}$ 
be functions with $f$ superadditive and $g$ non-decreasing, such that the class of graphs of row treewidth at most $b$ is $(f,g)$-bandwidth-flexible. Then the class of $K_h$-minor-free graphs is $(f',g')$-bandwidth-flexible, where $f'(n):=f(kn)+(2\max(h-5,0)+k)n$ and $g'(n):=2g(kn)$.
\end{lem}

\begin{proof}
Let $k,b\in\NN_1$ be from \cref{GMST} (depending on $h$ only). Since $f$ is superadditive and has non-negative outputs, it is also non-decreasing.
    
Let $\GG$ denote the class of graphs with row treewidth at most $b$, and let $a:=\max(h-5,0)$. By \cref{rtwExt}, $\widehat{\GG}=\GG$. Since $\GG$ is closed under adding isolated vertices, $\widehat{\Apex{\GG}{a}}\subseteq \Apex{\widehat{\GG}}{a}=\Apex{\GG}{a}$. Thus, $\widehat{\Apex{\GG}{a}}=\Apex{\GG}{a}$. 
    
Let $G$ be a $K_h$-minor-free graph, set $n:=|V(G))|$, and let $\delta\in\R_{\geq1}$ be any multiplier. By assumption, $\GG$ has $(f/\delta,g\delta)$-bandwidth. Thus, by \cref{addApex}, $\Apex{\GG}{a}$ has $(f/\delta+a,g\delta)$-bandwidth. Since $\GG$ is monotone, so is $\Apex{\GG}{a}$.


        
Let $(B_t:t\in V(T))$ be a tree-decomposition of $G$ produced by \cref{GMST}. Thus, $(B_t:t\in V(T))$ has adhesion at most $k$, and each torso is in $\Apex{\GG}{a}$. Fix a root $r\in V(T)$. For each $t\in V(T)\setminus \{r\}$, let $p(t)$ denote the parent of $t$, and let $K_t:=B_t\cap B_{p(t)}$. Note that $|K_t|\leq k$. Set $K_r:=\emptyset$.
        
Define a weighting $w$ of $T$, where $w_t:=|B_t\setminus K_t|$ for each $t\in V(T)$. Thus, $\sum_{t\in V(T)}w_t=n$. By \cref{treeDeletions}, there is a set $Z$ with $|Z|\leq \ceil{n/\delta}-1\leq n/\delta$ such that each connected component of $T-Z$ has weight (under $w$) at most $\delta$. Let $X:=\bigcup_{z\in Z}K_z$. Note that $|X|\leq k|Z|\leq kn/\delta$. Let $Z':=Z\cup \{r\}$. Observe that $|Z'|\leq |Z|+1\leq n/\delta + 1\leq 2n/\delta$ since $\delta\leq n$.

Let $F$ be the forest obtained from $T$ by deleting the edge $p(z)z$ for each $z\in Z'\setminus \{r\}$; see \cref{FigBreakup}. Consider any connected component $T'$ of $F$. Note that $T'$ is a subtree of $T$, the induced root of $T'$ is a vertex $z\in Z'$ (since $r\in Z'$), and that $V(T')\cap Z'=\{z\}$. For each $z\in Z'$, let $T_z$ be the component of $F$ containing $z$. Let $G_z:=G[\bigcup_{t\in V(T_z)}B_t\setminus K_t]$, and let $n_z:=|V(G_z)|$. Observe that $V(T)=\bigcup_{z\in Z'}V(T_z)$, and that $V(T_z)$ is disjoint from $V(T_{z'})$ whenever $z,z'\in Z'$ are distinct. Thus, $\bigcup_{z\in Z'}V(G_z)=V(G)$, and $V(G_z)$ is disjoint from $V(G_{z'})$ whenever $z,z'\in Z'$ are distinct. Therefore, $\sum_{z\in Z'}n_z=n$.
        \begin{figure}[!ht]
        \begin{center}
        \begin{minipage}{0.85\textwidth}
        \centering
        \includegraphics[width=\textwidth]{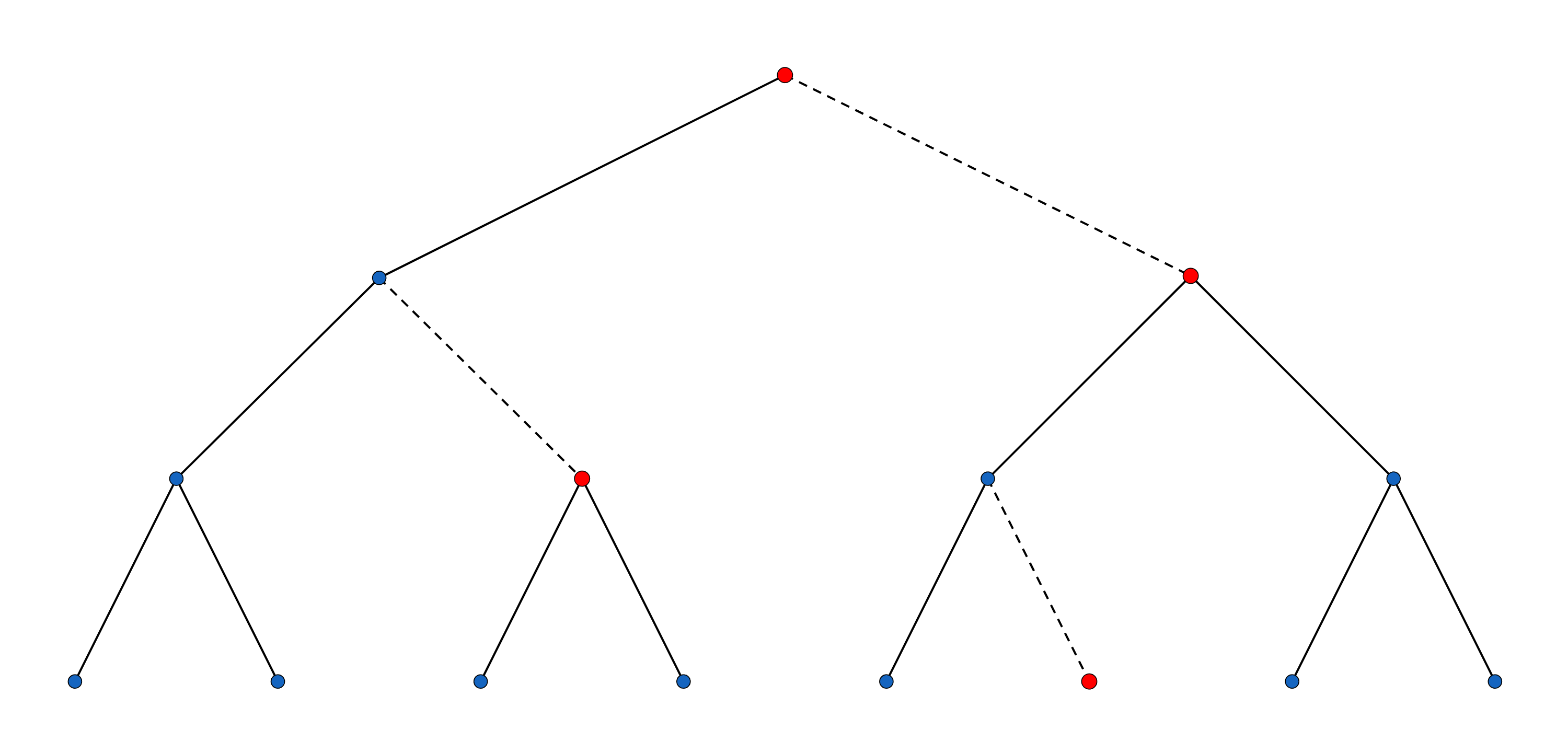}
        \caption{\label{FigBreakup}A diagram of the tree $T$, with the vertices in $Z'$ coloured red and the edges removed to make $F$ represented as dashed lines.}
        \end{minipage}
        \end{center}
        \end{figure}


For each $z\in Z'$, let $\scr{T}_z$ denote the set of connected components of $T_z-z$. Let $S$ be the star with centre $z$ and leaves $\scr{T}_z$. Let $B'_z:=B_z\setminus K_z$, and for each $Q\in \scr{T}_z$, let $B'_Q:=\bigcup_{t\in V(Q)}B_t \setminus K_z$. It follows that $(B'_s:s\in V(S))$ is a star-decomposition of $G_z$ of adhesion at most $k$, and $\UWT{G}{B_z}\in \Apex{\GG}{a}$ by the definition of $(B_t:t\in V(T))$. Since $\UWT{G_z}{B_z'}\subseteq \UWT{G}{B_z}$ and since $\Apex{\GG}{a}$ is monotone, $\UWT{G_z}{B_z'}\in \Apex{\GG}{a}$. By the definition of $Z$ and $w$, for each $Q\in \scr{T}_z$, $|B'_Q\setminus B'_z|\leq \delta$.

Thus, by \cref{starDecomp}, 
$G_z$ has $(f(kn_z)/\delta+a,\max(2\delta\,g(kn_z),\delta))$-bandwidth. So there exists $X_z\subseteq V(G_z)$ such that $|X_z|\leq f(kn_z)/\delta+a$ and $\bw(G_z-X_z) \leq \max(2\delta\,g(kn_z),\delta)=2\delta\,g(kn_z)$. 

        
Let $X':=X\cup \bigcup_{z\in Z'}X_z$. 
Since $f$ is superadditive
\[\sum_{z\in Z'}|X_z|\leq \sum_{z\in Z'}(f(kn_z)/\delta+a)\leq f(k\sum_{z\in Z'}n_z)/\delta+a|Z'| \enspace .\]
Since $\sum_{z\in Z'}n_z=n$ and $|Z'|\leq 2n/\delta$, 
\[\sum_{z\in Z'}|X_z| \leq (f(kn)+2an)/\delta\enspace .\]
Since $|X|\leq kn/\delta$, 
\[|X'|\leq (f(kn)+(2a+k)n)/\delta\enspace .\]
Now, each connected component $C$ of $G-X'$ is contained in $G_z-X_z$ for some $z\in Z'$. Thus, $\bw(C) \leq 2\delta\,g(kn_z)\leq 2\delta\,g(kn)$ since $g$ is non-decreasing. Since this is true for each connected component of $G-X'$, 
$G-X'$ itself has bandwidth at most $2\delta\,g(kn)$, as desired. 
 \end{proof}

We now complete the proof of the main result of this section. 

    \begin{proof}[Proof of \cref{minor-flex}]
    Let $b,k$ be from \cref{GMST}, which depend only on $h$. Let $f,g$ be from \cref{rtw-flex}. So $f\in O(bn\log n)$ and $g\in O(\log^3 n)$. We may take $f$ to be superadditive, and $g$ to be non-decreasing and mapping to $\R_{\geq 1}$. Thus, \cref{mainLem} is applicable. Finally, observe that $n\mapsto f(kn)+(2\max(h-5,0)+k)n \in O_h(n\log n)$ and $n\mapsto 2g(kn) \in O_h(\log^3 n)$.
    \end{proof}

\section{Graphs on Surfaces and With Crossings}
\label{genus_section}


This section proves results for graphs embeddable in fixed surfaces, even allowing a bounded number of crossings per edge. Analogous results with larger dependence on the parameters follow from our previous results for graphs of bounded row treewidth, since each of the classes studied in this section have bounded row treewidth \cite{dujmovic.joret.ea:planar,dujmovic.morin.ea:graph,DHSW24,ueckerdt.wood.ea:improved,dhhw22,HW24}.



The \defin{Euler genus} of a surface obtained from a sphere by adding $h$ handles and $c$ crosscaps is $2h+c$. The \defin{Euler genus} of a graph $G$ is the minimum Euler genus of a surface in which $G$ embeds without crossings. The following result  of \citet{eppstein:dynamic} is useful.\footnote{\cref{eppstein_planarizer} follows from Lemma~5.1 and the proof~of~Theorem~5.1 in \cite{eppstein:dynamic}.}

\begin{thm}[\cite{eppstein:dynamic}]
\label{eppstein_planarizer}
  Every $n$-vertex graph $G$ with Euler genus~$g$ has a set of $X$ of $O(\sqrt{gn})$ vertices such that $G-X$ is planar.
\end{thm}



\begin{lem}
\label{genus_D}
For every $\delta\in\R_{\geq1}$ and $g,n\in\NN$, every $n$-vertex graph $G$ of Euler genus $g$ has a set $X$ of $O(\sqrt{gn}+(n\log n)/\delta)$ vertices such that $G-X$ has bandwidth at most $O(\delta\log^3 n)$.
\end{lem}

\begin{proof}
By \cref{eppstein_planarizer}, $G$ has a set $X_0$ of $O(\sqrt{gn})$ vertices such that $G-X_0$ is planar.  By \cref{planar-flex}, $G-X_0$ has a set $X_1$ of $O((n\log n)/\delta)$ vertices such that $G-(X_0\cup X_1)$ has bandwidth at most $O(\delta \log^3 n)$. The result follows by taking $X:=X_0\cup X_1$.
\end{proof}

The next theorem follows from \cref{UniversalBlowup,genus_D} with multiplier  $\delta=\sqrt{n}/\log n$.

\begin{thm}\label{genus-fanblowup}
For any $g,n\in\NN$, there exists a $O(\sqrt{gn}+\sqrt{n}\log^2n)$-blowup of a fan that contains every $n$-vertex graph of Euler genus at most $g$.
\end{thm}

Our results also generalize for graphs that can be drawn with a bounded number of crossings on each edge. A graph $G$ is \defin{$k$-planar} if it has a drawing in the plane in which each edge participates in at most $k$ crossings, and no three edges cross at the same point. This topic is important in the graph drawing literature; see \cite{KLM17} for a survey just on the $k=1$ case. We use the following bound on the edge density of $k$-planar graphs by \citet{PachToth97}, which is readily proved using the Crossing Lemma~\cite{ajtai.chvatal.ea:crossing_free}.

\begin{thm}[\citep{PachToth97}]
\label{k_planar_density}
For any $k,n\in\NN_1$, every $n$-vertex $k$-planar graph has $O(k^{1/2} n)$ edges.
\end{thm}


\begin{thm}\label{k_planar_D}
For any $k\in\NN_1$, the class of $k$-planar graphs is $( O(k^{3/2}n\log n),O(k\log^3 n) )$-bandwidth-flexible. 
\end{thm}

\begin{proof}
Let $\delta\in\R_{\geq 1}$ be an arbitrary multiplier. We may assume that $n>k\delta$. Let $G$ be a $k$-planar graph. Fix a drawing of $G$ in which each edge is in at most $k$ crossings. Let $G'$ be the planar graph obtained from $G$ by replacing each crossing by a dummy vertex with degree 4, where the portion of an edge of $G$ between two consecutive crossings or vertices becomes an edge in $G'$. By \cref{k_planar_density}, $|E(G)| \in O(k^{1/2} n)$, so the number of dummy vertices introduced this way is $O(k^{3/2} n)$.  Thus $G'$ has $n'\in O(k^{3/2} n)$ vertices. 
By \cref{sparsifier_baker}, $G'$ has a set $X'$ of $O((n'\log n)/\delta)$ vertices such that $G'-X'$ has local density at most $\delta$.  By \cref{rao_bandwidth_vs_density}, $G'-X'$ has bandwidth $O(\delta \log^3 n)$. Let $v_1,\ldots,v_{n'-|X'|}$ be an ordering of $V(G'-X')$ with bandwidth $b:=\bw_{G'}(v_1,\ldots,v_{n'-|X'|})\in O(\delta \log^3 n)$. 

Define the set $X$ by starting with $X:=X'$ and then replacing each (dummy) vertex $x$ in $X\setminus V(G)$ with the endpoints of the two edges of $G$ that cross at $x$.  Then $|X|\le 4|X'|=O((n'\log n)/\delta ) = O((k^{3/2}n\log n)/\delta )$. Now consider any edge $v_i v_j$ of $G-X$. Since $v_i\not\in X$ and $v_j\not\in X$, $G'-X'$ contains a path from $v_i$ to $v_j$ of length at most $k+1$.  Therefore $|i-j|\le (k+1)b\in O(k\delta  \log^3 n)$, and $\bw(G-X)\in O(k\delta \log^3 n)$.
\end{proof}

The next result follows from \cref{UniversalBlowup,k_planar_D} with multiplier $k^{1/4}n^{1/2}/\log n$. 

\begin{cor}\label{k-planar-fanblowup}
    For any $k,n\in\NN_1$ there exists a $O(k^{5/4}\sqrt{n}\log^2n)$-blowup of a fan that contains every $n$-vertex $k$-planar graph.
\end{cor}

The following definition generalizes graphs of Euler genus $g$ and $k$-planar graphs. A graph $G$ is \defin{$(g,k)$-planar} if it has a drawing in a surface of Euler genus at most $g$ in which each edge is in at most $k$ crossings, and no three edges cross at the same point. 
To prove our results, we need a bound on the edge density like \cref{k_planar_density}.  To establish this, we use the following result of \citet{SSSV96}, which generalizes the Crossing Lemma to drawings of graphs on surfaces. 
For a graph $G$ and any $g\in\N$, let \defin{$\crr_g(G)$} denote the minimum number of crossings in any drawing of $G$ in any surface of Euler genus $g$ (with no three edges crossing at a single point).

\begin{lem}[\cite{SSSV96}]\label{shahroki}
  For every $g,n,m\in\N$ with $m\ge 8n$, for every graph $G$ with $n$ vertices and $m$ edges, 
  \[
      \crr_g(G) \ge 
      \begin{cases}
        \Omega(m^3/n^2) & \text{if $0 \le g< n^2/m$} \\
        \Omega(m^2/g) & \text{if $n^2/m \le g\le m/64$}. \\
      \end{cases}
  \]
\end{lem}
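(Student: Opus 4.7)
The plan is to generalize the classical probabilistic proof of the Crossing Lemma (Ajtai--Chv\'atal--Newborn--Szemer\'edi, Leighton) from the sphere to a surface of Euler genus $g$. The starting point is the genus-$g$ analogue of the trivial lower bound on $\crr_g$. By Euler's formula on a surface of Euler genus $g$, any simple graph that embeds without crossings satisfies $m \le 3(n + g - 2)$ (combining $n-m+f = 2-g$ with $3f \le 2m$). Applied to the subgraph obtained from an optimal drawing of $G$ by deleting one edge per crossing, this gives
\[
  \crr_g(G) \ge m - 3(n+g) + 6 \enspace .
\]

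Next I would apply the standard random-deletion argument. Sample each vertex of $G$ independently with probability $p \in (0,1]$, and let $G_p$ be the induced subgraph. Restricting a fixed optimal drawing of $G$ to $V(G_p)$ yields a drawing of $G_p$ on the same surface, so applying the inequality above to $G_p$ and taking expectations gives
\[
  p^{4}\,\crr_g(G) \ge E[\crr_g(G_p)] \ge p^2 m - 3p n - 3g \enspace ,
\]
since each vertex survives with probability $p$, each edge with probability $p^2$, and each crossing with probability $p^4$. Rearranging yields $\crr_g(G) \ge m/p^2 - 3n/p^3 - 3g/p^4$. I then optimize $p$. In Case~1 ($g < n^2/m$) I would set $p = cn/m$ for a small constant $c$ (the assumption $m \ge 8n$ ensures $p \le 1$), making the first term $\Theta(m^3/n^2)$, which dominates the second (by choice of $c$) and the third (by the assumption on $g$). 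In Case~2 ($n^2/m \le g \le m/64$) I would set $p = c\sqrt{g/m}$, which makes both the first and third terms of order $m^2/g$ while the middle term is of strictly smaller order; the hypothesis $g \le m/64$ guarantees $p \le 1$.

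The main obstacle I anticipate is calibrating the constants so that the correct term dominates in each regime, particularly near the transition $g \approx n^2/m$ where both prescriptions for $p$ are reasonable and both yield bounds of the same order (so the hidden constants in the two cases must be chosen compatibly). A secondary technical point is that the inequality $E[\crr_g(G_p)] \le p^{4}\crr_g(G)$ needs the observation that restricting a fixed optimal drawing of $G$ to $V(G_p)$ produces \emph{some} drawing of $G_p$ on the same surface with expected crossing number exactly $p^4\crr_g(G)$, whereas $\crr_g(G_p)$ is the minimum over all such drawings; this inequality goes in the direction we need. Otherwise the argument is a direct adaptation of the Ajtai--Chv\'atal--Newborn--Szemer\'edi template, with the only new ingredient being the extra $-3g$ term in the Euler-formula lower bound for higher-genus surfaces.
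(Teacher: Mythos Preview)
The paper does not prove \cref{shahroki}; it is quoted as a result of \citet{SSSV96} and used as a black box in the proof of \cref{gkPlanar}. So there is no ``paper's own proof'' to compare against.

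That said, your proposal is correct and is essentially the argument given in \cite{SSSV96}: the Euler-formula bound $m\le 3(n+g-2)$ on a surface of Euler genus $g$ yields $\crr_g(G)\ge m-3n-3g$, and random vertex sampling with probability $p$ then gives $\crr_g(G)\ge m/p^2-3n/p^3-3g/p^4$, which one optimizes as you describe. Your choice $p=\Theta(n/m)$ in the first regime and $p=\Theta(\sqrt{g/m})$ in the second produces the stated bounds, and the hypotheses $m\ge 8n$ and $g\le m/64$ are exactly what is needed to keep $p\le 1$. The two technical points you flag (constant calibration near the threshold $g\approx n^2/m$, and the direction of the inequality $E[\crr_g(G_p)]\le p^4\crr_g(G)$) are real but routine; in particular, dropping the $+6$ so that the base inequality $\crr_g(H)\ge |E(H)|-3|V(H)|-3g$ holds for \emph{all} simple $H$ (including those with fewer than three vertices) makes the expectation step clean.
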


\begin{thm}
\label{gkPlanar}
For every $\delta\in\R_{\geq 1}$, $g\in\N$ and $n,k\in\N_1$, 
every $n$-vertex $(g,k)$-planar graph $G$ has a set $X$ of $O(k^{3/4}g^{1/2}n^{1/2} + (k^{3/2}n\log n)/\delta)$ vertices such that $G-X$ has bandwidth $O(\delta k\log^3 n)$.
\end{thm}

\begin{proof}
Let $m:= |E(G)|$. We first show that $k^{3/4}g^{1/2}n^{1/2} \in \Omega(n)$ or that $m\in O(k^{1/2} n)$.  In the former case, taking $X:=V(G)$ trivially satisfies the requirements of the lemma.  We then deal with the latter case using a combination of the techniques used to prove \cref{genus-fanblowup,k-planar-fanblowup}.  

We may assume that $m\ge 64n$ since otherwise $m\in O(k^{1/2}n)$.  We may also assume that $k\le n^{2/3}$ and that $g\le n$ since, otherwise $k^{3/4}g^{1/2}n^{1/2}\ge n$.  (Note that these two assumptions imply that $g\le n\le m/64$.)  If $g< n^2/m$ then, by \cref{shahroki}, the $(g,k)$-planar embedding of $G$ has $\Omega(m^3/n^2)$ crossings.  Since each edge of $G$ accounts for at most $k$ of these crossings,  $km \ge \Omega(m^3/n^2)$, from which we can deduce that $m\in O(k^{1/2} n)$.   If $g\ge n^2/m$ then, by \cref{shahroki}, $G$ has $\Omega(m^2/g)$ crossings and, by the same reasoning, we deduce that $m\in O(kg)\subseteq O(kn)$, since $g\le n$.  Since $g\ge n^2/m$,
   \[
        k^{3/2}g \ge \frac{k^{3/2}n^2}{m} \ge \Omega\left(\frac{k^{3/2}n^2}{kn}\right) = \Omega(k^{1/2}n) \ge \Omega(n) \enspace .
   \]
   Multiplying by $n$ and taking square roots yields $k^{3/4}g^{1/2}n^{1/2} \ge\Omega(n)$.  
   
   We are now left only with the case in which $m\in O(k^{1/2} n)$.   Let $G'$ be the graph of Euler genus at most $g$ obtained by adding a dummy vertex at each crossing in $G$. Then $n':=|V(G')| \le n + km/2 \in O(k^{3/2} n)$ and $\log n' = O(\log n)$, since $k\le n^{2/3}$.
Now apply \cref{eppstein_planarizer} to obtain $X_1\subseteq V(G')$ such that $G'-X_1$ is planar and 
\[
   |X_1| \le \sqrt{gn'} = O(g^{1/2}k^{3/4}n^{1/2}) \enspace .
\]
Now apply \cref{planar-flex} to $G'-X_1$ to obtain a set $X_2\subseteq V(G'-X_1)$ such that 
\[
   |X_2| \le O(n'\log n'/\delta) = O((k^{3/2}n\log n)/\delta) 
\]
and $G'-(X_1\cup X_2)$ has local density at most $D$. Let $X$ be obtained from $X_1\cup X_2$ by replacing each dummy vertex $x$ with the endpoints of the two edges of $G$ that cross at $x$. Then $|X|\le 4|X_1\cup X_2|\in O(k^{3/4}g^{1/2}n^{1/2} + (k^{3/2}n\log n)/\delta)$. By \cref{rao_bandwidth_vs_density}, the bandwidth of $G'-X$ is $O(\delta\log^3 n)$.  Since each edge of $G$ corresponds to a path of length at most $k+1$ in $G'$, this implies that $\bw(G-X)\in O(\delta k\log^3 n)$. 
\end{proof}

The next result, which generalizes \cref{planar-fanblowup,genus-fanblowup,k-planar-fanblowup}, 
follows from \cref{UniversalBlowup,gkPlanar} with multiplier $\delta=k^{1/4}n^{1/2}/\log n$.

\begin{cor}
\label{gk-planar-fan-blowup}
For any $g,k,n\in\NN$ there exists a 
$O( k^{3/4}g^{1/2}n^{1/2} + k^{5/4}n^{1/2}\log^2 n)$-blowup of a fan that contains every $n$-vertex $(g,k)$-planar graph.
\end{cor}

\section*{Acknowledgments} 

This research was initiated at the \emph{Eleventh Annual Workshop on Geometry and Graphs}\linebreak (WoGaG~2024) held at the Bellairs Research Institute of McGill University, March 8--15, 2024. The authors are grateful to the workshop organizers and other participants for providing a working environment that is simultaneously stimulating and comfortable. A preliminary version of this paper (without the results for $K_h$-minor-free graphs) was presented at the \emph{ACM-SIAM Symposium on Discrete Algorithms} (SODA 2025). The authors thank the anonymous referees of the SODA submission and of the journal submission for their careful reading and helpful comments.


\appendix 
\section*{Appendix}
\section{Proof of \texorpdfstring{\cref{reciprocal_sum}}{Lemma?}}
\label{reciprocal_sum_section}

\reciprocalsum*

\begin{proof}[Proof of \cref{reciprocal_sum}]
  First we claim that, for any $x\in S$,
  \begin{equation}
    \sum_{y\in S\setminus\{x\}} \frac{1}{d(x,y)} \le \sum_{i=1}^{n-1}\frac{1}{i/D} = DH_{n-1} < DH_n \enspace . \label{reciprocal_crux}
  \end{equation}
  

To see this, let $d_1 \leq \dotsb \leq d_{n - 1}$ denote the distances of the elements in $S \setminus \{x\}$ from $x$. We claim that $d_i \geq i/D$ for all $i \in \{1, \dotsc, n - 1\}$. If not, then there is some $i$ with $d_i < i/D$. Then $B_{(S, d)}(x, d_i)$ has radius $r  \coloneqq d_i < i/D$ and size $i + 1$ (since it contains $x$), contradicting the fact that $(S, d)$ has local density at most $D$. Therefore, $\sum_{y \in S\setminus\{x\}} \frac{1}{d(x,y)} = \sum_{i = 1}^{n - 1} 1/d_i \leq \sum_{i = 1}^{n - 1} \frac{1}{i/D}$ and so \cref{reciprocal_crux} holds.


  For a set $K$, let $\Pi(K)$ denote the set of all permutations $\pi:\{1,\ldots,k\}\to K$.
  \citet[Lemma~17]{feige:approximating} shows that, for any $k$-element subset $K$ of $S$,
  \[
    \frac{2^{k-1}}{\tvol(K)} \le \sum_{\pi\in\Pi(K)}\frac{1}{\prod_{i=1}^{k-1}d(\pi(i),\pi(i+1))} \enspace .
  \]
  Therefore, to prove the lemma it is sufficient to show that
  \begin{equation}
    \sum_{K\in\binom{S}{k}}\sum_{\pi\in\Pi(K)}\frac{1}{\prod_{i=1}^{k-1}d(\pi(i),\pi(i+1))} \le n(DH_n)^{k-1} \enspace .
    \label{volume_sum}
  \end{equation}
  The proof is by induction on $k$.  When $k=1$, the outer sum in \cref{volume_sum} has $\binom{n}{1}=n$ terms, each inner sum has $1!=1$ terms, and the denominator in each term is an empty product whose value is $1$, by convention.  Therefore, for $k=1$, \cref{volume_sum} asserts that $n \le n$, which is certainly true.  Now assume that \cref{volume_sum} holds for $k-1$.  Then
  \begin{align*}
    & \quad \sum_{K\in\binom{S}{k}}\sum_{\pi\in\Pi(K)}\frac{1}{\prod_{i=1}^{k-1}d(\pi(i),\pi(i+1))} \\
    & = \sum_{K'\in\binom{S}{k-1}}\sum_{\pi\in\Pi(K')}\sum_{y\in S\setminus K'}\frac{1}{\prod_{i=1}^{k-2}d(\pi(i),\pi(i+1))}\cdot\frac{1}{d(\pi(k-1),y)} \\
    & = \sum_{K'\in\binom{S}{k-1}}\sum_{\pi\in\Pi(K')}\frac{1}{\prod_{i=1}^{k-2}d(\pi(i),\pi(i+1))}\cdot\sum_{y\in S\setminus K'}\frac{1}{d(\pi(k-1),y)} \\
    & \le \sum_{K'\in\binom{S}{k-1}}\sum_{\pi\in\Pi(K')}\frac{1}{\prod_{i=1}^{k-2}d(\pi(i),\pi(i+1))}\cdot DH_n & \text{(by \cref{reciprocal_crux})}\\
    & \le n(DH_n)^{k-2}DH_n & \text{(by induction)}\\
    & = n(DH_n)^{k-1} \enspace . & & \qedhere
  \end{align*}
\end{proof}

\section{Proof of \texorpdfstring{\cref{volume_density_bandwidth}}{Lemma?}}
\label{volume_density_bandwidth_section}

\volumedensitybandwidth*

\begin{proof}[Proof of \cref{volume_density_bandwidth}]
  Let $r$ be a random unit vector in $\R^L$ and for each $v\in S$, let $\mathdefin{h(v)}:=\langle r,\phi(v)\rangle$ be the inner product of $r$ and $\phi(v)$.  We will order the elements of $S$ as $v_1,\ldots,v_n$ so that $h(v_1)\le \cdots \le h(v_n)$.  To prove an upper bound on $\bw(S,d)$, it suffices to show an upper bound that holds with positive probability on the maximum, over all $vw$ with $d(v,w)\le 1$, of the number of vertices $x$ such that $h(v)\le h(x)\le h(w)$.

  Consider some pair $v,w\in S$ with $d(v,w)\le 1$. Since $\phi$ is a contraction, $d_2(\phi(v),\phi(w))\le 1$.  By \cite[Proposition~7]{feige:approximating}, $\Pr(|h(v)-h(w)|\ge \sqrt{4a\ln n/L}) \le n^{-a}$, for any $a\ge 1/(4\ln n)$. Therefore, with probability at least $1-n^{-a+2}$, $|h(v)-h(w)|\le \sqrt{4a\ln n/L}$ for each pair $v,w\in S$ with $d(v,w)\le 1$.

  Let  $K:=\{v_1,\ldots,v_k\}$ be a $k$-element subset of $S$. First observe that $\evol(\phi(K)) \le \Delta^k$, since $\evol(\phi(K))\le\prod_{i=2}^k d_2(\phi(v_{i-1}),\phi(v_i))\le \prod_{i=2}^k d(v_{i-1},v_i)\le \Delta^{k-1}$.  In particular, \linebreak $\log\evol(\phi(K))\le k\log \Delta$.

  Define $\mathdefin{\ell(K)}:=\max_{v\in K}h(v)-\min_{v\in K} h(v)$.  By \cite[Theorem~9]{feige:approximating} there exists a universal constant $\beta$ such that, for any $c>0$,
  \[
      \Pr(\ell(K) \le c)
        < \frac{(\beta L)^{k/2}c^k\max\{1,\log(\evol(\phi(K))\}}{k^k\evol(\phi(K))}
        \le \frac{(\beta L)^{k/2}c^kk\log\Delta}{k^k\evol(\phi(K))} \enspace .
  \]
  In particular,
  \begin{align*}
    \Pr(\ell(K) \le \sqrt{4a\ln n/L})
      & <
      \frac{(4\beta a\ln n)^{k/2}\, k\log\Delta}{k^k\evol(\phi(K))} \\
      & \le \frac{(4\beta a\ln n)^{k/2}\eta^{k-1}\, k\log\Delta}{k^k\ivol(\phi(K))} \\
      & \le \frac{(4\beta a\ln n)^{k/2}\eta^{k-1}(k-1)!2^{(k-2)/2}\, k\log\Delta}{k^k\tvol_{d}(K)} \\
      & \le \frac{(4\beta a\ln n)^{k/2}\eta^{k-1}2^{(k-2)/2}\,k\log\Delta}{\tvol_{d}(K)} \\
      & \le \frac{\left((8\beta a\ln n)^{1/2}\eta\right)^{k}\,k\log\Delta}{\tvol_{d}(K)} \enspace .
  \end{align*}
  Say that $K\in\binom{S}{k}$ is \defin{bad} if $\ell(K) \le \sqrt{4a\ln n/L}$. By \cref{reciprocal_sum}, the expected number of bad sets is
  \begin{align}
    \sum_{K\in \binom{S}{k}} \Pr(\text{$K$ is bad})
    & \le \sum_{K\in \binom{S}{k}} \frac{\left((8\beta a\ln n)^{1/2}\eta\right)^{k}\, k\log\Delta}{\tvol_{d}(K)} \notag \\
    & \le \left((8\beta a\ln n)^{1/2}\eta DH_n\right)^{k}\, nk\log\Delta
    \enspace .
     \label{bad_expectation}
  \end{align}
  Let $B$ be a maximum cardinality subset of $S$ with $\ell(B)<\sqrt{4a\ln n/L}$.  The vertices in $B$ form $\binom{|B|}{k}$ bad sets. Therefore, by Markov's Inequality, the probability that $\binom{|B|}{k}$ exceeds \eqref{bad_expectation} by a factor of at least $2$ is at most $1/2$.  Therefore, with probability at least $1/2$, $\binom{|B|}{k}\le 2\left((8\beta a\ln n)^{1/2}\eta DH_n\right)^{k}\, nk\log\Delta$, which implies that $|B|\in O((nk\log\Delta)^{1/k}\,Dk\eta\log^{3/2} n)$ with probability at least $1/2$.\footnote{Very roughly, $\binom{|B|}{k}$ is approximated by $(|B|/k)^k$.}  Therefore, with probability at least $1/2-n^{-a+2}$, $\bw_d(x_1,\ldots,x_n)\in O((nk\log\Delta)^{1/k}\,Dk\eta\log^{3/2} n)$.
\end{proof}


\section{Proof of \texorpdfstring{\cref{volume_preserver}}{Claim?}}
\label{volume_preserver_proof}

\volumepreserverClaim*

\begin{proof}
  The following argument is due to \citet[pages~529--530]{feige:approximating}.  Let $K$ be a set of $k$ vertices of $(H\boxtimes P)-X$. Let $T$ be a minimum spanning tree of the complete graph on $K$ where the weight of an edge $xy$ is $d^*(x,y)$.  Let $x_1,\ldots,x_k$ be an ordering of the vertices in $K$ and $T_0,\ldots,T_k$ be a sequence of trees such that $T_{p}$ is a minimum spanning tree of $x_1,\ldots,x_{p}$ that contains $T_{p-1}$ as a subgraph, for each $p\in\{1,\ldots,k\}$.  That such an ordering and sequence of trees exist follows from the correctness of Prim's Algorithm. For each $p\in\{2,\ldots,k\}$, let $h_p:=d^*(x_p,\{x_1,\ldots,x_{p-1}\})$ be the cost of the unique edge in $E(T_p)\setminus E(T_{p-1})$.  Observe that $\prod_{p=2}^k h_p = \tvol_{d^*}(K)$.

  For each $p\in\{2,\ldots,k\}$, let $B_{p-1}:=\left\{\sum_{i=1}^{p-1}\lambda_i\phi(v_i):(\lambda_1,\ldots,\lambda_{p-1})\in\Gamma_{p-1}\right\}$ be the subspace of $\R^L$ spanned by $\phi(v_1),\ldots,\phi(v_{p-1})$.  Then\footnote{This is the $(p-1)$-dimensional generalization of the formula $a:=bh/2$ for the area $a$ of a triangle $v_1,v_2,v_3$ with base length $b=\evol(\{v_1,v_2\})$ and height $h=d_2(v_3,B_2)$, where $B_2$ is the line containing $v_1$ and $v_2$.}
  \[
    \evol(\{v_1,\ldots,v_p\})=\frac{\evol(\{v_1,\ldots,v_{p-1}\})\cdot d_2(v_p,B_{p-1})}{p-1} \enspace .
  \]
  Observe that each coordinate $\phi_{i,j}(v_p)$ of $\phi(v_p)$ is at most $2(n-1)$, since $\phi_{i,j}(v_p)=\alpha_{i,j}\cdot d_{H\boxtimes P}(v_p, \overline{C_{i,j}})\le 2(n-1)$. Therefore, $\phi(v_p)$ is contained in a ball $B$ of radius $2(n-1)\sqrt{L}$ around the origin. \citet{feige:approximating} uses these two facts to show $B_{p-1}\cap B$ can be covered by $\Theta(n^{2k})$ balls, each of radius $h_p\zeta$, such that, if $\phi(v_p)$ is not contained in any of these balls, then $d_2(\phi(v_p),B_{p-1})\ge 2h_p\zeta/3$.  When this happens,
  \[
    \evol(\{\phi(v_1),\ldots,\phi(v_p)\})\ge \frac{(2h_p\zeta/3)\cdot\evol(\{\phi(v_1),\ldots,\phi(v_{p-1})\})}{p-1} \enspace .
  \]
  By \cref{crux}, the probability that $\phi(v_p)$ is not contained in any of these balls is at least $1-O(n^{-k})$. By the union bound, the probability that this occurs for each $p\in\{2,\ldots,k\}$ is at least $1-O(kn^{-k})$.  Therefore, with probability at least $1-O(kn^{-k})$,
  \[
    \evol(\phi(K)) \ge \prod_{p=2}^{k}\frac{2h_p\zeta/3}{p-1} = \frac{\tvol_{d^*}(K)\cdot (2\zeta/3)^{k-1}}{(k-1)!} \enspace . \qedhere
  \]
\end{proof}

\newpage
\begin{aicauthors}
\begin{authorinfo}[marc]
Marc Distel\\
School of Mathematics\\
Monash University\\
Melbourne, Australia\\
marc\imagedot{}distel\imageat{}monash\imagedot{}edu
\end{authorinfo}
\begin{authorinfo}[vida]
Vida Dujmovi{\'c}\\
School of Computer Science and Electrical Engineering\\
University of Ottawa\\
Ottawa, Canada\\ 
vida\imagedot{}dujmovic\imageat{}uottawa\imagedot{}ca
\end{authorinfo}
\begin{authorinfo}[gwen]
Gwena\"el Joret\\
D\'epartement d'Informatique\\
Universit\'e libre de Bruxelles\\
Brussels, Belgium\\
gwenael\imagedot{}joret\imageat{}ulb\imagedot{}be\\
\end{authorinfo}
\begin{authorinfo}[piotrek]
Piotr Micek\\
Department of Theoretical Computer Science\\
Jagiellonian University\\
Kraków, Poland\\ 
piotr\imagedot{}micek\imageat{}uj\imagedot{}edu\imagedot{}pl
\end{authorinfo}
\begin{authorinfo}[pat]
Pat Morin\\
School of Computer Science\\
Carleton University\\
Ottawa, Canada\\ 
morin\imageat{}scs\imagedot{}carleton\imagedot{}ca
\end{authorinfo}
\begin{authorinfo}[david]
David R. Wood\\
School of Mathematics\\
Monash University\\
Melbourne, Australia\\
david\imagedot{}wood\imageat{}monash\imagedot{}edu
\end{authorinfo}
\end{aicauthors}
\end{document}